\documentclass[11pt]{amsart}
\usepackage[margin=1.2in]{geometry}
\usepackage{amsmath,amssymb,amsthm,mathtools,array,booktabs}
\usepackage{enumitem}
\usepackage{microtype}
\usepackage[T1]{fontenc}
\usepackage{lmodern,comment,placeins}
\usepackage{hyperref}
\usepackage{blkarray}

\usepackage{tikz}
\usetikzlibrary{arrows.meta,calc}
\usepackage{graphicx}
\usepackage{tabularx}
\newcolumntype{Y}{>{\centering\arraybackslash}X}

\usepackage{xcolor}
\definecolor{edgeA}{RGB}{0,114,178}    % blue
\definecolor{edgeB}{RGB}{0,100,0}   % dark green
\definecolor{edgeC}{RGB}{204,0,204}    % magenta
\definecolor{edgeD}{RGB}{139,0,0}   % dark red
\definecolor{titlecol}{RGB}{30,30,30}
\definecolor{tablegray}{gray}{0.82}

\usepackage{algorithm}
\usepackage{algpseudocode}

\theoremstyle{plain}
\newtheorem{theorem}{Theorem}[section]
\newtheorem{utheorem}{\textrm{\textbf{Theorem}}}

\newtheorem{lemma}[theorem]{Lemma}
\newtheorem{proposition}[theorem]{Proposition}

\theoremstyle{definition}
\newtheorem{defn}[theorem]{Definition}

\newtheorem{remark}[theorem]{Remark}
\newtheorem{quest}[theorem]{Question}

\newcommand{\R}{\mathbb{R}}
\newcommand{\Lob}{\mathcal{L}}
\newcommand{\ct}{\mathcal{T}}
\newcommand{\ctv}{\mathcal{C}}
\newcommand{\ip}[2]{\left[#1,#2\right]}
\newcommand{\Span}{\operatorname{span}}
\newcommand{\rank}{\operatorname{rank}}
\newcommand{\dist}{\operatorname{dist}}
\newcommand{\inn}{\operatorname{In}}
\newcommand{\diam}{\operatorname{diam}}
\newcommand{\diag}{\operatorname{diag}}

\newcommand{\A}{\operatorname{\mathcal A}}
\newcommand{\lgeq}{\succcurlyeq}

\DeclareMathOperator{\ch}{cosh}
\DeclareMathOperator{\sh}{sinh}
\DeclareMathOperator{\sech}{sech}
\DeclareMathOperator{\ach}{arcosh}
\DeclareMathOperator{\bt}{{\boldsymbol{\tau}}}
\DeclareMathOperator{\bp}{{\boldsymbol{\Pi}}}

\makeatletter
\renewcommand{\l@section}{\@tocline{1}{0pt}{0em}{}{}}
\renewcommand{\l@subsection}{\@tocline{2}{0pt}{1.5em}{}{}}
\renewcommand{\l@subsubsection}{\@tocline{3}{0pt}{3em}{}{}}
\makeatother
\title{Hyperbolic distance matrix completion}
\author[Mihai Putinar]{Mihai Putinar}
\address[M.~Putinar]{University of California at Santa Barbara, CA, USA}
\email{\tt mputinar@math.ucsb.edu}

\author[Prateek Kumar Vishwakarma]{Prateek Kumar Vishwakarma}
\address[P.K.~Vishwakarma]{Universit\'e Laval, Qu\'ebec, QC, Canada}
\email{\tt prateekv@alum.iisc.ac.in}
\date{\today}

\subjclass[2020]{
Primary
15A83;
Secondary
51K05,
46C20,
51M10,
05C50}

\keywords{Lobachevsky space, Hyperbolic distance matrix, Lorentz-Gram matrix, matrix completion, chordal graph, simplicial elimination, clique-tree, inverse sparsity, determinant maximization, metric embedding, shortest-path metric, metric distortion}

\begin{document}

\begin{abstract}
A completion theory for hyperbolic distance data is developed at the interface of matrix analysis, graph theory, and hyperbolic geometry. Krein’s characterization of the metric space embeddability in Lobachevsky space leads to a natural anchoring procedure that transforms the indefinite data into a positive semidefinite kernel. In analogy with positive semidefinite and Euclidean distance matrix completion, chordality of the specification graph is shown to be the necessary and sufficient condition for local Lorentz-Gram data to admit global completion. Existence is complemented by explicit constructions. For trees, we obtain geodesic-rectification and product-distance completions; for chordal graphs, the latter extends to matrix-valued transfers along clique-trees. The resulting canonical completion is characterized by sparsity of its inverse and by a maximum-absolute-determinant principle. Its metric distortion exhibits a sharp dichotomy governed by clique separator size. Applications to exact recovery from sparse hyperbolic measurements and to hierarchical and phylogenetic data are developed.
\end{abstract}

\maketitle
\tableofcontents

\section{Introduction}

A partial matrix is one in which only some of the entries are prescribed, and a completion problem asks whether the unspecified entries can be chosen so that the resulting matrix belongs to a given structured class. For positive semidefinite matrices the answer is governed by the combinatorics of the graph recording the prescribed positions: Grone, Johnson, S\'a, and Wolkowicz proved that the local conditions on the fully specified principal submatrices suffice for global completion precisely when that graph is chordal \cite{Grone-Johnson-Sa-Wolkowicz}. Bakonyi and Johnson established the corresponding theorem for Euclidean distance matrices \cite{Bakonyi-Johnson}. Johnson's survey \cite{JohnsonSurvey1990} records the classical theory through 1990, and \cite{Liberti-SIAM} its distance geometry descendants. Spherical, Euclidean, and hyperbolic distance data are the three constant curvature instances of a single Gram matrix formalism, and the two theorems just quoted settle the first two. The present article treats the third, where chordality again turns out to be the exact criterion.

The Lorentzian foundations of hyperbolic geometry belong to classical geometric algebra and the theory of spaces with an indefinite metric \cite{ArtinGeometricAlgebra,IK-1,IK-2,VinbergGeometryII}, while Lorentz-Gram matrices appeared explicitly in Loewner's work on the critical exponent of the Green function \cite{LoewnerCriticalExponent}. The entrywise preservers of Lorentz-Gram matrices, and the corresponding distance preservers of Lobachevsky space, were recently classified by Belton, Guillot, Khare, and Putinar \cite{Belton-Guillot-Khare-Putinar}. Isometric embeddings of a metric space in Lobatchevsky space appeared in the pioneering work of Krein \cite{Krein1948} where a classification of the respective screw lines was discovered. Hyperbolic embeddings have also been developed for trees and hierarchical data \cite{Sarkar,Sala}, graph metrics, social networks, and geographic routing \cite{Verbeek,Kleinberg}, as well as for the completion and denoising of hyperbolic distance matrices \cite{TabaghiDokmanic2020}. Their applicability is now visible across several areas: hyperbolic geometry has been used to uncover structure in financial networks \cite{KRN}, to represent and update phylogenetic trees \cite{Jiang,Wilson}, and to organize image and video data \cite{Kh-IEEE,HOVER}.

\subsection{Lorentz-Gram completion and its historical antecedent}

The data considered in this article are partial: the entries associated with certain pairs are prescribed, and the remaining entries are unknown. Our problem is to determine when the unspecified entries can be chosen so that the resulting matrix is the Lorentz-Gram matrix of a configuration in Lobachevsky space. The prescribed entries are therefore retained, and the principal questions are the existence, structure, and \textit{explicit} construction of such completions.

The passage from partial to global Lorentz-Gram data has a classical structured antecedent in the work of Krein on indefinite Toeplitz forms and screw lines in Lobachevsky space. Krein's theory of Lobachevsky screw lines, announced \cite{Krein1948} in $1948$ and subsequently developed with Iohvidov, associates to a stationary metric function a Toeplitz kernel having one positive square. This is an early offspring of the emerging at that time theory of spaces with an indefinite metric, later on bearing the names of Pontryagin and Krein. In the first part, Iohvidov and Krein \cite{IK-1} studied complete indefinite inner-product spaces with $\kappa$ positive squares and derived Pontryagin's invariant-subspace theorem for self-adjoint operators \cite{Pontryagin1944} from its unitary analogue. In the second part \cite{IK-2}, they studied in depth the structure of finite Toeplitz forms. Particular attention is given to their continuation while preserving the prescribed number of positive squares, obtaining both uniqueness results in the degenerate case and infinitely many continuations in the nondegenerate case. A determinantal identity referring to overlapping principal minors, due to Sylvester and Jacobi is essential in the proof of the continuation of these specific indefinite Toeplitz forms (see footnote on page 343 in \cite{IK-2}). The continuous time analog is then recovered from these finite differences results and yields the extension of every finite screw arc to an infinite screw line. Thus, indefinite Lorentz-Gram completion already appears in this classical theory under the strong constraint of translation invariance. The present work replaces this Toeplitz pattern by arbitrary graph-specified data, where the corresponding local-to-global principle is no longer stationarity, but chordality. Krein's classification of Lobachevsky screw lines has also found a different recent application: it plays a key role in the classification of entrywise distance preservers for Lobachevsky space \cite{Belton-Guillot-Khare-Putinar}.

Our exact extension framework should also be distinguished from the recovery of a low-rank matrix from a sampled subset of its entries, for which Cand\'es and Tao \cite{CandesTao} established convex recovery guarantees under suitable sampling and incoherence assumptions. Here the specification graph is given, and the required structure is Lorentz-Gram realizability rather than a prescribed low rank.

The present realizability framework also differs from the optimization-based approaches commonly studied in the theory of hyperbolic embeddings. Much of that literature seeks embeddings with small distortion, prescribed dimension, or good performance with respect to a data-dependent objective \cite{Sarkar,Sala,Verbeek,Kh-IEEE,HOVER}; completion and denoising from possibly noisy observations have also been approached through semidefinite optimization \cite{TabaghiDokmanic2020}. 

Our starting point is the exact extension of the prescribed data, rather than an optimization criterion. In the nonsingular chordal setting, we further characterize the canonical completion through sparsity of its inverse and show that it uniquely maximizes the absolute determinant over all Lorentz-Gram completions. We subsequently study the metric distortion of the explicit completions constructed below.

\subsection{Three geometries, one anchoring} The place of Lorentz-Gram completion within the classical completion theory becomes clearest when the spherical, Euclidean, and Lobachevsky geometries are viewed side by side. The three constant curvature model spaces encode their distance data through Gram matrices associated with ambient bilinear forms. In each case, the admissibility of the distance data can be expressed in terms of the inertia of an appropriate matrix. What changes from one geometry to another is the signature of the ambient form and the quadric on which the representing vectors lie.

\medskip
\textit{\textbf{1.} Ambient realizations and inertia.}
\medskip

\noindent Throughout this comparison, $d_{ij}$ denotes the intrinsic geodesic distance between the points indexed by $i$ and $j$ in the geometry under consideration. On the unit sphere $S^d\subseteq\R ^{d+1}$, the ambient form $\langle \cdot, \cdot \rangle$ is positive definite and
\[
g_{ij}=\langle u_i,u_j\rangle=\cos d_{ij}.
\]
Consequently, the matrix $G=(g_{ij})$ is positive semidefinite and has diagonal entries equal to $1$. Whereas, on Lobachevsky space $\Lob^d\subseteq\R ^{1,d}$, the ambient form $\ip{\cdot}{\cdot}$ is Lorentzian and
\[
g_{ij}=[u_i,u_j]=\cosh d_{ij}.
\]
The resulting matrix $G=(g_{ij})$ has one positive square and satisfies $g_{ij}\geq1=g_{ii}$. Except in the degenerate case where all the points coincide, it is not positive semidefinite. Euclidean space admits a related description through the light cone. On $\R \oplus\R ^d\oplus\R $, consider the form
\[
[(\alpha,x,\beta),(\alpha',x',\beta')]_0
:=
\frac12\bigl(\alpha\beta'+\alpha'\beta\bigr)
-\langle x,x'\rangle.
\]
The map $x\mapsto\widehat x:=(1,x,\|x\|^2)$ takes $\R ^d$ into the light cone, since $[\widehat x,\widehat x]_0=0$; and $2[\widehat x_i,\widehat x_j]_0=d_{ij}^2$. Accordingly, Euclidean distance data are encoded by the bordered Cayley--Menger matrix
\[
\operatorname{CM}(D)
:=
\begin{pmatrix}
0&\mathbf1^t\\
\mathbf1&(d_{ij}^2)
\end{pmatrix},
\]
whose admissibility is likewise expressed by the condition that it have one positive square.

\medskip
\textit{\textbf{2.} The uniform anchoring formula.}
\medskip

\noindent The role of anchoring is the same in all three geometries. Fix an index $k$. On the sphere, orthogonally removing the component in the direction of $u_k$ gives
\[
\bigl(g_{ij}-g_{ik}g_{jk}\bigr)_{i,j}
=
\bigl(
\langle u_i-g_{ik}u_k,\,
u_j-g_{jk}u_k\rangle
\bigr)_{i,j}
\lgeq0.
\]
For points $x_i$ in Euclidean space, the corresponding anchored Euclidean Gram matrix, commonly called the Schoenberg matrix or a modified Cayley--Menger matrix, is
\[
\left(
\frac{d_{ik}^2+d_{jk}^2-d_{ij}^2}{2}
\right)_{i,j}
=
\bigl(
\langle x_i-x_k,x_j-x_k\rangle
\bigr)_{i,j}
\lgeq0.
\]
Finally, in Lobachevsky space the orthogonal complement of $u_k$ is negative definite, and hence
\[
\bigl(g_{ik}g_{jk}-g_{ij}\bigr)_{i,j}
=
\bigl(
-[u_i-g_{ik}u_k,\,
u_j-g_{jk}u_k]
\bigr)_{i,j}
\lgeq0.
\]
Thus, in all three geometries, anchoring converts the distance data into a positive semidefinite matrix. Their common origin becomes visible once the curvature is made explicit: the three transforms are instances of a single curvature-dependent formula. More precisely, let $\kappa\in\{1,0,-1\}$ denote the curvature, and define the ``generalized cosine'' $c_\kappa$ and $a_{ij}$ by
\[
c_\kappa(t):=
\begin{cases}
\cos t,  & \kappa=1,\\
1,       & \kappa=0,\\
\cosh t, & \kappa=-1,
\end{cases}
\qquad\qquad
a_{ij}:=
\begin{cases}
\frac{1}{\kappa}\big({c_\kappa(d_{ij})-1}\big), & \kappa\neq0,\\[1mm]
-\frac12d_{ij}^2,                  & \kappa=0.
\end{cases}
\]
With this normalization, $a_{ii}=0$, and $-2a_{ij}$ is the positive quadratic quantity associated with the difference of the corresponding ambient representatives:
\[
-2a_{ij}
=
\begin{cases}
\langle u_i-u_j,u_i-u_j\rangle, &\quad \kappa=1,\\
\langle x_i-x_j,x_i-x_j\rangle
, &\quad \kappa=0,\\
-[u_i-u_j,u_i-u_j], &\quad \kappa=-1.
\end{cases}
\]
Indeed, in the spherical case $\langle u_i,u_j\rangle=\cos d_{ij}$, while in the Lobachevsky case $[u_i,u_j]=\cosh d_{ij}$, so the three quantities above are $2(1-\cos d_{ij})$,~$d_{ij}^2$,~$2(\cosh d_{ij}-1)$, respectively. Anchoring at $k$ now gives the single formula
\begin{equation}\label{eq:uniform-anchor}
T_k(i,j)
:=
a_{ij}-a_{ik}-a_{jk}-\kappa a_{ik}a_{jk}.
\end{equation}
For $\kappa\neq0$, this can equivalently be written as $T_k(i,j)=\frac{1}{\kappa}\big({g_{ij}-g_{ik}g_{jk}}\big)$ where $g_{ij}:=c_\kappa(d_{ij})$; whereas at $\kappa=0$ it becomes $T_k(i,j)= \frac12\bigl(d_{ik}^2+d_{jk}^2-d_{ij}^2\bigr)$. Thus,
\[
T_k(i,j)
=
\begin{cases}
\cos d_{ij}-\cos d_{ik}\cos d_{jk},
    & \kappa=1,\\
\frac12\bigl(d_{ik}^2+d_{jk}^2-d_{ij}^2\bigr),
    & \kappa=0,\\
\cosh d_{ik}\cosh d_{jk}-\cosh d_{ij},
    & \kappa=-1,
\end{cases}
\qquad
\mbox{and}\qquad T_k\lgeq0.
\]

\medskip
\textit{\textbf{3.} Curvature, Schur complements, and dimension.}
\medskip

\noindent Formula~\eqref{eq:uniform-anchor} also displays the effect of curvature. At $\kappa=0$, the quadratic term disappears and one obtains the additive Euclidean transform. For nonzero curvature, the term $-\kappa a_{ik}a_{jk}$ is the corresponding curvature correction; in the Lobachevsky case it introduces the product term that foreshadows the product-distance and clique-product formulas appearing later in the paper. There is also a common Schur complement interpretation. When $\kappa\neq0$, the matrix $T_k$ is the Schur complement of the $1\times1$ anchor block $(g_{kk})=(1)$, normalized by $\kappa$. At $\kappa=0$, the anchor vector is null, and the corresponding $1\times1$ block is singular. One instead uses the block
\[
\begin{pmatrix}
0&1\\
1&0
\end{pmatrix}
\]
formed by the border and the $k$-th index of the Cayley--Menger matrix. The Schur complement of this block has $(i,j)$-entry $d_{ij}^2-d_{ik}^2-d_{jk}^2=-2T_k(i,j)$. The Euclidean construction is therefore the limiting form of the same anchoring procedure.

The sign of the curvature determines the sign of the orthogonal complement of the anchor. It is positive definite on the sphere and negative definite in Lobachevsky space, accounting for the opposite orders of the terms in the two curved cases. Moreover,
\[
\diag T_k
=
\begin{cases}
(\sin^2d_{ik})_i,  & \kappa=1,\\
(d_{ik}^2)_i,      & \kappa=0,\\
(\sinh^2d_{ik})_i, & \kappa=-1,
\end{cases}
\]
and $\rank T_k$ gives the minimal dimension of the corresponding realization.

\medskip
\textit{\textbf{4.} The three completion theories.}
\medskip

\noindent This comparison places the present paper within the classical completion theory. For spherical distance data, completion reduces to positive semidefinite completion and is governed by the theorem of Grone, Johnson, S\'a, and Wolkowicz \cite{Grone-Johnson-Sa-Wolkowicz}. For Euclidean distance data, the corresponding theory was developed by Bakonyi and Johnson \cite{Bakonyi-Johnson}. The present paper develops the analogous completion theory for Lobachevsky distance data. The reversal from
\[
|g_{ij}|\leq1=g_{ii}
\quad\text{on the sphere}
\qquad\text{to}\qquad
g_{ij}\geq1=g_{ii}
\quad\text{in Lobachevsky space}
\]
is precisely what replaces positive semidefiniteness of the full Gram matrix by the condition that it have one positive square. Table~\ref{tab:three-geometries} summarizes these correspondences.

{\begin{table}[H]
\centering
\footnotesize
\renewcommand{\arraystretch}{1.18}
\setlength{\tabcolsep}{3pt}
\begin{tabularx}{\textwidth}{
    @{}
    >{\raggedright\arraybackslash}p{2.65cm}
    YYY
    @{}
}
\toprule
&
\textbf{Sphere $S^d$}
&
\textbf{Euclidean $\R ^d$}
&
\textbf{Lobachevsky $\Lob^d$}
\\
\midrule

Curvature $\kappa$
&
$1$
&
$0$
&
$-1$
\\

\addlinespace
Ambient form and signature
&
$\langle\cdot,\cdot\rangle$ on $\R ^{d+1}$,
$(d+1,0)$
&
$[\cdot,\cdot]_0$ on $\R ^{d+2}$,
$(1,d+1)$
&
$[\cdot,\cdot]$ on $\R ^{d+1}$,
$(1,d)$
\\

\addlinespace
Model set
&
$\langle u,u\rangle=1$
&
$[\widehat x,\widehat x]_0=0$,
$\widehat x=(1,x,\|x\|^2)$
&
$[u,u]=1$, $u_0>0$
\\

\addlinespace
\addlinespace
$g_{ij}=c_\kappa(d_{ij})$
&
$\cos d_{ij}$
&
$1$
&
$\cosh d_{ij}$
\\

\addlinespace
$a_{ij}$
&
$\cos d_{ij}-1$
&
$-\dfrac12d_{ij}^2$
&
$1-\cosh d_{ij}$
\\

\addlinespace
$-2a_{ij}$,
the ambient chordal square distance
&
$\|u_i-u_j\|^2$
&
$d_{ij}^2$
&
$-[u_i-u_j,u_i-u_j]$
\\

\addlinespace
Admissibility criterion
&
$G \lgeq 0$,
$g_{ii}=1$
&
$\left(
\begin{smallmatrix}
0&\mathbf1^{t}\\
\mathbf1&(d_{ij}^2)
\end{smallmatrix}
\right)$
has one positive square
&
$G$ has one positive square,
$g_{ii}=1$, $g_{ij}\geq1$
\\

\addlinespace
Position of the diagonal
&
$|g_{ij}|\leq1=g_{ii}$
&
$a_{ij}\leq0=a_{ii}$
&
$g_{ij}\geq1=g_{ii}$
\\

\addlinespace
Anchor block at $k$
&
$(g_{kk})=(1)$
&
$\left(
\begin{smallmatrix}
0&1\\
1&0
\end{smallmatrix}
\right)$
using the border and $k$
&
$(g_{kk})=(1)$
\\

\addlinespace
$T_k(i,j)$ from
\eqref{eq:uniform-anchor}
&
$g_{ij}-g_{ik}g_{jk}$
&
$\dfrac12
 \bigl(d_{ik}^2+d_{jk}^2-d_{ij}^2\bigr)$
&
$g_{ik}g_{jk}-g_{ij}$
\\

\addlinespace
Gram form of $T_k$
&
$\langle
u_i-g_{ik}u_k,\,
u_j-g_{jk}u_k
\rangle$
&
$\langle
x_i-x_k,\,
x_j-x_k
\rangle$
&
$-[
u_i-g_{ik}u_k,\,
u_j-g_{jk}u_k
]$
\\

\addlinespace
Diagonal of $T_k$
&
$\sin^2d_{ik}$
&
$d_{ik}^2$
&
$\sinh^2d_{ik}$
\\

\addlinespace
Minimal embedding dimension
&
$\rank T_k$
&
$\rank T_k$
&
$\rank T_k$
\\

\addlinespace
Completion theory
&
Grone--Johnson--S\'a--Wolkowicz
\cite{Grone-Johnson-Sa-Wolkowicz}
&
Bakonyi--Johnson
\cite{Bakonyi-Johnson}
&
{{Present article}}
\\

\bottomrule
\end{tabularx}

\caption{Spherical, Euclidean, and Lobachevsky distance data as Gram matrices. In each column, $k$ is a fixed anchor index and $T_k$ is the positive semidefinite matrix obtained from \eqref{eq:uniform-anchor}. The Euclidean column is understood as the value at $\kappa=0$, where the anchor vector is null.}
\label{tab:three-geometries}
\end{table}}
\FloatBarrier

Regardless of mentioning that the completion of structured matrices is an old subject which has surprisingly reappeared in an array of contexts: moment problems, interpolation of bounded analytic functions, control theory, prediction of time series, operator dilations, network analysis, the organization of statistical data, to name only a few.

\section{Main results}

We now formulate the Lorentz-Gram completion problem and state the principal results of the paper. The discussion proceeds mainly in four stages.

\begin{enumerate}
\item
We first characterize the fully specified kernels (Definition~\ref{defn:LG-completion}) that arise from families of points in Lobachevsky space.

\item
We then determine exactly which specification graphs (Definition~\ref{defn:LG-completion}) guarantee completion: they are precisely the chordal graphs.

\item
Next, we construct explicit completions for trees and, more generally, for chordal graphs, and realize them algorithmically through orthogonal innovation.

\item
Finally, we study the distortion of the hyperbolic embeddings arising from these Lorentz-Gram completions and obtain two complementary results: the product-distance completion for trees is asymptotically an isometry, whereas the clique-product completion for chordal graphs exhibits a sharp dichotomy according to separator size.
\end{enumerate}

We will focus on the hyperboloid model of Lobachevsky space.

\begin{defn}[Hyperboloid model of Lobachevsky space]
\label{defn:lob-space}
Let $H$ be a real Hilbert space, and equip $\R\oplus H$ with the Lorentz form $[(x_0,{\bf x}_1),(y_0,{\bf y}_1)]:=x_0y_0-\langle {\bf x}_1,{\bf y}_1\rangle_H$. This form has positive index one, and its associated quadratic form is $[(x_0,{\bf x}_1),(x_0,{\bf x}_1)]=x_0^2-\|{\bf x}_1\|^2$. The hyperboloid model of Lobachevsky space is the positive sheet of the unit hyperboloid,
\[
\Lob(H)
:=
\bigl\{(x_0,{\bf x}_1)\in\R\oplus H:
x_0^2-\|{\bf x}_1\|^2=1,\ x_0>0\bigr\}.
\]
It is equipped with the hyperbolic distance $d_{\Lob}(u,v):=\ach [u,v]$ for all vectors $u,v\in\Lob(H)$. Equivalently we have, $[u,v]=\cosh d_{\Lob}(u,v)$.
\end{defn}

We can now formulate the completion problem studied in this paper.

\begin{defn}[Specification graph and Lorentz-Gram completion]\label{defn:LG-completion}
Let $V$ be a nonempty possibly infinite set.
\begin{enumerate}
\item A partial symmetric kernel on $V$ is a possibly infinite matrix $A=(a_{ij})_{i,j\in V}$ in which only some entries are prescribed, subject to the following conditions: every diagonal entry $a_{ii}$ is prescribed; whenever $a_{ij}$ is prescribed, so is $a_{ji}$; and the prescribed entries satisfy $a_{ij}=a_{ji}$. 
    
\item The specification graph of $A$ is the simple undirected graph $G:=G(A):=(V,E)$ whose vertices index the rows and columns of $A$, with $\{i,j\}\in E$, for $i\neq j$, precisely when $a_{ij}$ is prescribed. In particular, if $G$ is the complete graph, then $A$ is called fully specified.
    
\item A completion of $A$ is a fully specified symmetric kernel $B=(b_{ij})_{i,j\in V}$ such that $b_{ij}=a_{ij}$ for every prescribed entry $a_{ij}$. A completion $B=(b_{ij})_{i,j\in V}$ of $A$ is called a Lorentz-Gram completion if there exist a real Hilbert space $H$ and vectors $u_i\in\Lob(H)$, $i\in V$, such that $b_{ij}=[u_i,u_j]$ for all $i,j\in V$.
\end{enumerate}
If $V$ is finite, the corresponding kernel is called a matrix.
\end{defn}

The completion problem naturally separates into two questions. The first is to recognize the fully specified kernels that occur as Lorentz-Gram kernels; the second is to determine when prescribed partial data can be extended to a kernel of this kind. We begin with the first question. Fixing one point as an anchor converts the Lorentzian realization condition into positive semidefiniteness of an associated kernel, while the condition of having one positive square gives an equivalent characterization that is independent of the choice of anchor.

\subsection{Lorentz-Gram kernel recognition}

We formally define positive semidefinite kernel, and kernel with one positive square.

\begin{defn}\label{defn:pos-ker} Let $I$ be a nonempty set.

\begin{enumerate}
\item A real symmetric kernel $P=(p_{ij})_{i,j\in I}$ is called positive semidefinite if, for every finite subset $F\subseteq I$, the matrix $(p_{ij})_{i,j\in F}$ is positive semidefinite.

\item A real symmetric kernel $K=(k_{ij})_{i,j\in I}$ is said to have at most one positive square if every finite restriction $(k_{ij})_{i,j\in F}$ has at most one positive eigenvalue, counted with multiplicity. It is said to have one positive square if, in addition, at least one finite restriction has a positive eigenvalue. 
\end{enumerate}
When $k_{ii}=1$ for every $i\in I$, the latter condition is automatic.
\end{defn}

The recognition of kernels arising from an indefinite inner-product space is classical in the theory of Krein and Pontryagin spaces; in particular, kernels with one positive square admit realizations in a space of positive index one. In the present normalized setting, this general principle takes a particularly concrete form. Namely, fixing one index produces an anchored positive semidefinite kernel, while the condition of having one positive square gives an equivalent intrinsic criterion, independent of the chosen anchor. The following formulation is a convenient specialization of these classical principles to Lorentz-Gram kernels. For the corresponding finite-dimensional characterization of Lorentz-Gram matrices by the presence of exactly one positive eigenvalue, see also \cite[Theorem~2.2]{Belton-Guillot-Khare-Putinar}.

\begin{utheorem}[Lorentz-Gram kernel]
\label{T:LG-ker-char}
Let $I$ be a nonempty set, and let $K:=(k_{ij})_{i,j\in I}$ be a real symmetric kernel satisfying $k_{ij}\geq k_{ii}=1$ for all $i,j\in I$. For each $i_0\in I$, define the anchored kernel $P_{i_0}=\bigl(p^{(i_0)}_{ij}\bigr)_{i,j\in I}$ by the following, for all $i,j\in I$:
\[
p^{(i_0)}_{ij}
:=
k_{ii_0}k_{ji_0}-k_{ij}.
\]
Then the following statements are equivalent:
\begin{enumerate}
\item
There exist a real Hilbert space $H$ and a family $(u_i)_{i\in I}\subseteq\Lob(H)$ such that $k_{ij}=[u_i,u_j]$ for all $i,j\in I$. In this case, $K$ is called a Lorentz-Gram kernel, or a Lorentz-Gram matrix if $I$ is finite.

\item
The kernel $P_{i_0}$ is positive semidefinite for some $i_0\in I$.

\item
The kernel $P_{i_0}$ is positive semidefinite for every $i_0\in I$.

\item
The kernel $K$ has one positive square.
\end{enumerate}
Whenever these conditions hold, $p^{(i_0)}_{i_0,j}=0$ for every $j\in I$. Moreover, the minimal dimension of a Hilbert space $H$ in a realization in~{\rm(1)} is the supremum of $\rank \bigl(p^{(i_0)}_{ij}\bigr)_{i,j\in F}$ where $F\subseteq I$ runs over all finite sets.
\end{utheorem}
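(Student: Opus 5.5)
We would prove the cycle (1)$\Rightarrow$(3)$\Rightarrow$(2)$\Rightarrow$(1), and separately (3)$\Leftrightarrow$(4). Throughout, the basic algebraic device is the Schur complement of the $1\times1$ block $(k_{i_0i_0})=(1)$. For a finite set $F\ni i_0$, this gives the congruence
\[
K_F \;=\; L\begin{pmatrix}1&0\\0&-P_{i_0}|_{F\setminus\{i_0\}}\end{pmatrix}L^t ,
\]
where $L$ is unit lower triangular with first column $(k_{ii_0})_{i\in F}$. The identity $p^{(i_0)}_{i_0 j}=k_{i_0i_0}k_{ji_0}-k_{i_0j}=0$ is immediate from $k_{i_0i_0}=1$ and symmetry.

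\textbf{(1)$\Rightarrow$(3).} Fix $i_0$ and set $w_i:=u_i-k_{ii_0}u_{i_0}$. Then $[w_i,u_{i_0}]=0$, and a direct expansion gives $-[w_i,w_j]=k_{ii_0}k_{ji_0}-k_{ij}=p^{(i_0)}_{ij}$. Since the Lorentz form has positive index one and $[u_{i_0},u_{i_0}]=1>0$, the orthogonal complement $u_{i_0}^{[\perp]}$ is negative definite. Hence $-[\cdot,\cdot]$ is a genuine inner product there, and $P_{i_0}$ is a Gram kernel, so it is positive semidefinite. \textbf{(3)$\Rightarrow$(2)} is trivial.

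\textbf{(2)$\Rightarrow$(1).} This is the constructive step. By the Kolmogorov/Moore factorization of positive semidefinite kernels, there exist a real Hilbert space $H$ and vectors $x_i\in H$ with $\langle x_i,x_j\rangle=p^{(i_0)}_{ij}$, and $x_{i_0}=0$. Define
\[
u_i:=\bigl(k_{ii_0},\,\sqrt{k_{ii_0}^2-1}\;\text{-free form}\bigr)\quad\text{more precisely}\quad u_i:=(k_{ii_0},x_i)\in\R\oplus H .
\]
Then $[u_i,u_j]=k_{ii_0}k_{ji_0}-p^{(i_0)}_{ij}=k_{ij}$. In particular $[u_i,u_i]=k_{ii}=1$, and $u_i$ has first coordinate $k_{ii_0}\ge1>0$, so $u_i\in\Lob(H)$.

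For minimality, we may take $H=\overline{\Span}\{x_i\}$. Its dimension is the supremum of the ranks of the finite sections of $P_{i_0}$. Conversely, in any realization the vectors $w_i$ span, inside $u_{i_0}^{[\perp]}$, a subspace of dimension at least this supremum. That complement has codimension one in $\R\oplus H$, so $\dim u_{i_0}^{[\perp]}=\dim H$. The supremum therefore bounds $\dim H$ from below in every realization, and the bound is attained.

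\textbf{(3)$\Leftrightarrow$(4).} Here we use the congruence above together with Sylvester's law of inertia. If $P_{i_0}\lgeq0$, then every $K_F$ with $i_0\in F$ has exactly one positive eigenvalue. A general finite $F$ is a principal submatrix of $K_{F\cup\{i_0\}}$, and by Cauchy interlacing it has at most one positive eigenvalue. Conversely, suppose $K$ has at most one positive square, and take $F\ni i_0$. By the congruence, $K_F$ has inertia equal to one positive square plus the inertia of $-P_{i_0}|_{F\setminus\{i_0\}}$. So $-P_{i_0}$ can have no positive eigenvalue, that is, $P_{i_0}|_F\lgeq0$ for every finite $F$. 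Finally, having \emph{one} positive square is automatic because $k_{ii}=1$.

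The main obstacle we anticipate is not the algebra but the infinite and minimal-dimension bookkeeping. This includes justifying the lower bound on $\dim H$ when the ranks of the finite sections are unbounded (the infinite-cardinal case), and checking that the orthogonal complement of $u_{i_0}$ in $\R\oplus H$ is isometric to a Hilbert space of the same dimension as $H$. We would handle this by an explicit Lorentz boost carrying $u_{i_0}$ to $(1,0)$, which maps $u_{i_0}^{[\perp]}$ onto $\{0\}\oplus H$ with $-[\cdot,\cdot]=\langle\cdot,\cdot\rangle_H$.
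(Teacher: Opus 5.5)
Your proposal is correct. For the directions involving (4) it takes a genuinely different route from the paper.

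\textbf{What matches the paper.} Your chain (1)$\Rightarrow$(3)$\Rightarrow$(2)$\Rightarrow$(1) is the paper's anchor reduction (Theorem~\ref{T:LG-anchor-char}) essentially verbatim. So is your minimal-dimension count.

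\textbf{What differs.} The paper links the anchored criterion to (4) geometrically, in two steps:
\begin{itemize}
\item[(a)] (2)$\Rightarrow$(4) goes through a realization and the fact that the Lorentz form on $\R\oplus H$ has positive index one.
\item[(b)] (4)$\Rightarrow$(2) invokes Krein's Lobachevsky embedding theorem (Theorem~\ref{Krein-embedding}), an indefinite GNS/Pontryagin-space construction, to produce a realization, and then applies the anchor reduction.
\end{itemize}
You replace both steps with the finite congruence $K_F\sim(1)\oplus\bigl(-P_{i_0}|_{F\setminus\{i_0\}}\bigr)$ for $F\ni i_0$, plus Sylvester's law and interlacing. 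This shows, section by section and for each fixed anchor, that $n_+(K_F)\leq 1$ if and only if $P_{i_0}|_F\lgeq0$.

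\textbf{What each route buys.} Your argument is entirely elementary. It makes anchor-independence, (2)$\Rightarrow$(3), a purely algebraic fact. Combined with the Kolmogorov factorization, it also yields Krein's theorem (normalized case $R=1$, and general $R$ by rescaling) as a corollary rather than an input. It is the Schur-complement interpretation of the anchored transform that the paper itself points out in its introduction. The paper's route instead situates the criterion within the Krein--Pontryagin theory of kernels with one positive square.

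\textbf{Minor points.}
\begin{itemize}
\item Delete the garbled first definition of $u_i$ in your (2)$\Rightarrow$(1); the formula $u_i=(k_{ii_0},x_i)$ is the correct one.
\item Positive index one together with $[u_{i_0},u_{i_0}]>0$ gives only negative \emph{semi}definiteness of $u_{i_0}^{\perp}$. Definiteness also needs nondegeneracy of the ambient form, or you can cite Lemma~\ref{L:neg-def} directly. Semidefiniteness is all your argument actually uses, since the rank of any Gram matrix is at most the dimension of the span.
\item The unbounded-rank case is not a real obstacle, and no Lorentz boost is needed. In that case the lower bound on $\dim H$ is trivial. The only subtlety is that the statement's ``supremum'' must be read as ``both infinite'' rather than as an equality of Hilbert cardinals, and this affects the paper's proof equally. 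Otherwise, $u_{i_0}^{\perp}$ has codimension one in $\R\oplus H$, which already gives the bound.
\end{itemize}
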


Theorem~\ref{T:LG-ker-char} turns the geometric definition of a Lorentz-Gram kernel into an algebraic criterion: a normalized symmetric kernel with entries at least one is Lorentz-Gram precisely when it has one positive square, or, equivalently, when any of its anchored kernels is positive semidefinite. The rank of the anchored kernel also gives the minimal dimension of a realization. Related finite-dimensional questions concerning sufficient conditions for a symmetric matrix to have exactly one positive eigenvalue, including conditions involving entrywise functions, are studied by Al-Saafin and Garloff~\cite{Garloff}. Having thus settled the recognition problem for fully specified kernels, we now turn to the completion problem for partial Lorentz-Gram data.

\subsection{Chordality as the completion criterion}

Let $V$ be a finite set, and let $A=(a_{ij})_{i,j\in V}$ be a partial symmetric matrix with specification graph $G=(V,E)$. A necessary condition for $A$ to admit a Lorentz-Gram completion is that every fully specified clique principal submatrix of $A$ be a Lorentz-Gram matrix. Since principal submatrices of Lorentz-Gram matrices are again Lorentz-Gram, it is enough to impose this condition on the maximal cliques of $G$.

\begin{defn}[Induced subgraph, clique, chordal graph, clique principal submatrix]
Let $V$ be a finite nonempty set, and let $G=(V,E)$ be a finite simple graph.
\begin{enumerate}
\item
For a nonempty subset $V'\subseteq V$, the subgraph of $G$ induced by $V'$ is the graph with vertex set $V'$ whose edges are precisely the edges of $G$ having both endpoints in $V'$.

\item
A nonempty subset $C\subseteq V$ is called a clique of $G$ if the subgraph induced by $C$ is complete. A clique is maximal if it is not properly contained in any other clique of $G$.

\item
The graph $G$ is called chordal if it contains no induced cycle of length at least four.

\item
Let $A$ be a partial symmetric matrix with specification graph $G$, and let $V'\subseteq V$ be nonempty. The principal submatrix of $A$ indexed by $V'$ is called a clique principal submatrix if the subgraph of $G$ induced by $V'$ is a clique. In this case, the principal submatrix is fully specified. If $V'$ is a maximal clique of $G$, it is called a maximal clique principal submatrix.
\end{enumerate}
\end{defn}

Chordality is the fundamental graph-theoretic condition in several classical matrix and distance completion problems: it governs positive definite completion through the theorem of Grone, Johnson, S\'a, and Wolkowicz~\cite{Grone-Johnson-Sa-Wolkowicz}, and Euclidean distance completion through the work of Bakonyi and Johnson~\cite{Bakonyi-Johnson}. Related inheritance principles for positive completions of partial operator matrices were developed by Bakonyi and Constantinescu~\cite{Bakonyi-Constantinescu}. The following result shows that the same class of chordal graphs governs the Lorentz-Gram completion.

\begin{utheorem}[Lorentz-Gram completion and chordal graphs]
\label{T:chordal-characterization}
For a finite simple graph $G=(V,E)$, the following statements are equivalent.
\begin{enumerate}
\item
The graph $G$ is chordal.

\item
Every partial symmetric matrix $A=(a_{ij})_{i,j\in V}$ with specification graph $G$ all of whose fully specified clique principal submatrices are Lorentz-Gram matrices admits a Lorentz-Gram completion.
\end{enumerate}
Consequently, if $G$ is chordal, then a partial symmetric matrix with specification graph $G$ admits a Lorentz-Gram completion if and only if each of its maximal clique principal submatrices is a Lorentz-Gram matrix.
\end{utheorem}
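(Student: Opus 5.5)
We prove (1)$\Rightarrow$(2) by a one-vertex-at-a-time extension along a perfect elimination ordering. The converse (2)$\Rightarrow$(1) is proved by exhibiting, on every induced cycle of length at least four, local data with no completion. The final ``consequently'' clause then follows at once. Every clique lies in a maximal clique, and principal submatrices of Lorentz-Gram matrices are Lorentz-Gram, since their points are a subfamily of points in $\Lob(H)$.

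\emph{Sufficiency.} Let $G$ be chordal. By induction on $|V|$ it suffices to treat the case of a single missing edge $\{p,q\}$ such that $G+\{p,q\}$ is still chordal and contains a unique maximal clique $C\cup\{p,q\}$ not already in $G$. Here $C$ is the set of common neighbours of $p$ and $q$. This is the standard chordal-graph fact used by Grone--Johnson--S\'a--Wolkowicz: one can add edges one by one, staying chordal, until the graph is complete. The maximal clique condition of (2) survives each step, because the only new maximal clique is $C\cup\{p,q\}$, and we will make its principal submatrix Lorentz-Gram.

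So the core step is a three-block problem. Suppose the matrices on $C\cup\{p\}$ and on $C\cup\{q\}$ are Lorentz-Gram. We must choose $x=b_{pq}$ so that the matrix on $C\cup\{p,q\}$ is Lorentz-Gram. If $C\neq\emptyset$, anchor at some $c\in C$ using Theorem~\ref{T:LG-ker-char}. The anchored matrices $P_c$ on $C\cup\{p\}$ and $C\cup\{q\}$ are positive semidefinite, and they overlap on $C$. The anchored entry for $(p,q)$ is $k_{pc}k_{qc}-x$, which is affine in $x$ with nonzero slope. The classical positive semidefinite completion of a $3\times 3$ block pattern then gives a value of $k_{pc}k_{qc}-x$ making the full anchored matrix positive semidefinite. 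In Gram terms, we realize the vectors for $C$, then attach $p$ and $q$ with innovations in mutually orthogonal new directions.

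It remains to check that the resulting $x$ satisfies $x\ge 1$. Positive semidefiniteness of $P_c$ then gives a Lorentz-Gram matrix via Theorem~\ref{T:LG-ker-char}. Geometrically, the completion produces genuine points $u_p,u_q\in\Lob(H)$, so $x=[u_p,u_q]\ge 1$ holds automatically by the reverse Cauchy--Schwarz inequality on the future sheet. I would verify this by directly constructing the vectors, since Theorem~\ref{T:LG-ker-char}(2)$\Rightarrow$(1) gives realizations of each side that can be glued along $C$. If $C=\emptyset$, then $p$ and $q$ lie in different components, and we glue realizations in $\R\oplus(H_1\oplus H_2)$, which yields a valid value of $x$. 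I expect the gluing of the two realizations along the common clique $C$ to be the main technical point. The cleanest route is via the anchored positive semidefinite kernels, where the unique isometry between Gram realizations of $P_c|_C$ extends.

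\emph{Necessity.} If $G$ is not chordal, take an induced cycle $v_1,\dots,v_m$ with $m\ge4$. Prescribe data that is realizable on every edge but cannot close up. Assign $\cosh$ of distance $t$ to the cycle edges $v_iv_{i+1}$ for $i<m$, and $\cosh$ of distance $T>(m-1)t$ to the edge $v_mv_1$. All other vertices are placed at a common point, with their prescribed entries equal to $1$ toward that point. Edges from them to the cycle are assigned values consistent with a chosen single point. To keep cliques consistent it is easiest to set every vertex off the cycle equal to $v_1$'s point; since the cycle is induced, each clique meets the cycle in at most an edge. Each maximal clique is then realizable by two points at the given distance plus duplicates. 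Any completion, however, would violate the triangle inequality for $d_{\Lob}$ around the cycle. Since this is a genuine metric, there is no Lorentz-Gram completion, which contradicts (2).
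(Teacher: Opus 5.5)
\textbf{Sufficiency.} This half is sound, but your local step differs from the paper's. The paper glues two Lorentz realizations along the common clique using Witt's extension theorem and a sheet-preservation argument (Lemma~\ref{L:one-edge-lob-completion}). You instead anchor at $c\in C$, solve the classical $3\times 3$ block positive semidefinite completion for the anchored kernel, and rebuild $u_i=(k_{ic},\mathbf{x}_i)$. Because $k_{ic}\ge 1$ for every $i\in C\cup\{p,q\}$, all $u_i$ lie on the positive sheet, and $x=[u_p,u_q]\ge 1$ comes for free. This must come from that direct construction and not from Theorem~\ref{T:LG-ker-char} as stated, since its hypotheses already presuppose $x\ge1$. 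Your route is a legitimate and slightly more elementary alternative. The induction should be on the number of missing edges, not on $|V|$.

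\textbf{Necessity: the clique data are not admissible.} The gap is in the data you put on edges leaving the cycle. ``Every off-cycle vertex sits at $v_1$'s point'' does not define $a_{wv_i}$ when $v_i$ is not adjacent to $v_1$. Worse, no choice works once an off-cycle vertex sees many cycle vertices.

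Take the wheel $W_4$: a hub $w$ adjacent to every vertex of the induced cycle $v_1v_2v_3v_4$, with maximal cliques $\{w,v_i,v_{i+1}\}$.
\begin{enumerate}
\item Since $a_{wv_1}=1$, the cliques $\{w,v_1,v_2\}$ and $\{w,v_4,v_1\}$ force $d(w,v_2)=t$ and $d(w,v_4)=T$.
\item The triangle inequality inside the cliques $\{w,v_2,v_3\}$ and $\{w,v_3,v_4\}$ would then give $T\le d(w,v_3)+t\le 3t$, contradicting $T>3t$.
\end{enumerate}
So some maximal clique submatrix is not Lorentz-Gram, and your matrix is not a witness against (2). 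Placing the hub at $v_1$ simply transplants the cycle obstruction into the clique data. For the same reason, ``two points plus duplicates'' cannot be arranged consistently across the cliques $\{w,v_i,v_{i+1}\}$.

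\textbf{The fix.} The common-point idea is right; the choice of point is wrong. Put all off-cycle vertices at one point far from the cycle:
\begin{itemize}
\item set $\lambda=0$ on edges between off-cycle vertices;
\item set $\lambda=M$ with $2M\ge T$ on every edge joining an off-cycle vertex to the cycle.
\end{itemize}
As you note, each clique meets the induced cycle in at most one edge. Each clique is then realizable in one of three ways:
\begin{itemize}
\item if it meets the cycle in an edge $\{v_i,v_{i+1}\}$: an isosceles triangle with sides $M,M,\lambda_{v_iv_{i+1}}$ plus copies of the apex, which exists by the hyperbolic law of cosines since $\lambda_{v_iv_{i+1}}\le T\le 2M$;
\item if it meets the cycle in one vertex: two points at distance $M$;
\item if it misses the cycle: a single point.
\end{itemize}
The restriction to the induced cycle still has no completion, and this passes to $G$ because the cycle is induced. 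This is exactly the paper's construction, with $t=1$, $T=k$, $M\ge k/2$.
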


Thus, for chordal specification graphs, the local Lorentz-Gram conditions on the maximal cliques are sufficient for global completion. For nonchordal graphs this fails: there are partial matrices for which every fully specified clique principal submatrix is Lorentz-Gram, while no Lorentz-Gram completion exists. Chordality therefore marks exactly the passage from local clique compatibility to global realizability. We next turn from existence to explicit completion schemes.

\subsection{Three explicit completion schemes}
\label{SS:explicit-constructions}

Theorem~\ref{T:chordal-characterization} determines when the clique conditions guarantee the existence of a Lorentz-Gram completion, but does not specify how such a completion should be chosen or realized. For both structural questions and possible applications, it is therefore important to identify completions that admit explicit realizations. Such realizations make it possible to recover concrete configurations in Lobachevsky space and to analyze the geometry and distortion of the resulting embeddings. They also reveal connections with the product-distance matrices of Bapat and Sivasubramanian \cite{BapatSivasubramanian2012}, the simplicial elimination theory of Dirac \cite{Dirac1961} and Fulkerson and Gross \cite{FulkersonGross1965}, and the clique-tree theory of Buneman \cite{Buneman1974Rigid} and Gavril \cite{Gavril1974}; see also Blair and Peyton \cite{BlairPeyton1993}. The canonical chordal construction also connects with the inverse formula of Johnson and Lundquist \cite{Johnson-Lundquist}. Its inverse-sparsity and variational aspects are compared in Remark~\ref{rem:covariance-moment-comparision} with Dempster's covariance selection \cite{Dempster1972} and with the inverse-zero theory for truncated moment matrices developed by Helton, Lasserre, and Putinar \cite{Helton-Lasserre-Putinar}.

We describe three constructions. For trees, we obtain two generally different completions: one places the entire tree on a single hyperbolic geodesic, while the other produces the product-distance completion by introducing a new orthogonal direction at each vertex. The latter construction extends from trees to chordal graphs, where scalar products along vertex paths are replaced by matrix products across clique separators.

\subsubsection{Signed paths on a geodesic}\label{SS:signed-path}

The first construction places all vertices of the tree on a common hyperbolic geodesic, with the sign assigned to each edge recording the direction of the corresponding displacement.

In Subsections~\ref{SS:signed-path} and~\ref{SS:prod-dist}, as well as in the later sections containing the corresponding proofs, we also allow infinite graphs. Throughout these sections, a tree is a connected simple graph containing no cycles, and its vertex set need not be finite unless stated otherwise. Thus, every two vertices of a tree are joined by a unique finite path.

\begin{defn}[Signed-path coordinates]\label{defn:signed-path-coordinates}
Let $V$ be a nonempty possibly infinite set, and let $A=(a_{ij})_{i,j\in V}$ be a partial symmetric kernel such that $a_{ii}=1$ for every $i\in V$ and $a_{ij}\geq1$ whenever $\{i,j\}\in E$. Suppose its specification graph $G=(V,E)$ is a tree. Fix an anchor vertex $0\in V$, and regard $G$ as rooted at $0$. For each $i\in V$, let $P(0,i)$ denote the unique path from $0$ to $i$, viewed also as its set of edges. For every edge $e=\{i,j\}\in E$, we define $\lambda_e:=\ach(a_{ij})$. An edge-signing is a map $\varepsilon:E\to\{\pm1\}$; we write $\varepsilon_e:=\varepsilon(e)$. The corresponding signed-path coordinates are defined by $\bt^\varepsilon: V \to \R$:
\[
\bt_0^\varepsilon:=\bt^\varepsilon(0):=0,
\qquad
\bt_i^\varepsilon:=\bt^\varepsilon(i)
:=
\sum_{e\in P(0,i)}\varepsilon_e\lambda_e,
\qquad i\neq0.
\]
When the edge-signing is fixed, we suppress the superscript
$\varepsilon$.
\end{defn}

\begin{utheorem}[Signed-path Lorentz-Gram completion]
\label{T:tree-completion}
Retain the hypotheses of Definition~\ref{defn:signed-path-coordinates}. Choose an edge-signing $\varepsilon:E\to\{\pm1\}$, and let $(\bt_i)_{i\in V}$ be the corresponding signed-path coordinates. Then the symmetric kernel $B=(b_{ij})_{i,j\in V}$, defined by
\[
b_{ij}:=\ch(\bt_i-\bt_j)
\]
is a Lorentz-Gram completion of $A$. More precisely, $B$ is the Lorentz-Gram kernel of the points $u_i:=\bigl(\ch\bt_i,\sh\bt_i\bigr)\in\Lob(\R)$, $i\in V$. Its anchor row and column are given by $b_{i0}=b_{0i}=\ch(\bt_i)$. Moreover, the anchor transform $\Pi_0(B):=\bigl(b_{i0}b_{j0}-b_{ij}\bigr)_{i,j\in V}$ is positive semidefinite and has rank at most one (Theorem~\ref{T:LG-ker-char}). In fact, $b_{i0}b_{j0}-b_{ij}=\sh(\bt_i)\sh(\bt_j)$.
\end{utheorem}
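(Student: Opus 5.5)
The plan is a direct computation with the explicit points $u_i:=(\ch\bt_i,\sh\bt_i)\in\R\oplus\R$. Take $H=\R$. First I would check that each $u_i$ lies in $\Lob(\R)$. We have $\ch^2\bt_i-\sh^2\bt_i=1$ and $\ch\bt_i>0$, so this holds. Next, the Lorentz form gives
\[
[u_i,u_j]=\ch\bt_i\ch\bt_j-\sh\bt_i\sh\bt_j=\ch(\bt_i-\bt_j)=b_{ij}.
\]
Hence $B$ is the Lorentz-Gram kernel of the family $(u_i)_{i\in V}$. It is symmetric, and $b_{ii}=1$. Since $\bt_0=0$, the anchor entries are $b_{i0}=\ch\bt_i$.

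The only substantive step is to show that $B$ is a completion of $A$, that is, $b_{ij}=a_{ij}$ for every edge $\{i,j\}\in E$. I would argue from the tree structure. For an edge $e=\{i,j\}$, one endpoint is the parent of the other with respect to the root $0$; say $i$ is the parent. Then $P(0,j)=P(0,i)\cup\{e\}$, because the path to $j$ is unique and passes through its parent. Therefore $\bt_j-\bt_i=\varepsilon_e\lambda_e$. Since $\ch$ is even, $b_{ij}=\ch(\lambda_e)=\ch(\ach a_{ij})=a_{ij}$. This uses the hypothesis $a_{ij}\ge1$, which makes $\ach$ well defined. The diagonal entries match because $a_{ii}=1=b_{ii}$.

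This argument works unchanged for infinite trees. Each $\bt_i$ is a finite sum because paths in a tree are finite, and $\Lob(\R)$ is a legitimate realization in the sense of Definition~\ref{defn:LG-completion}.

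For the anchor transform, the hyperbolic addition formula gives
\[
b_{i0}b_{j0}-b_{ij}=\ch\bt_i\ch\bt_j-\ch(\bt_i-\bt_j)=\sh\bt_i\sh\bt_j.
\]
So $\Pi_0(B)=ss^{t}$ with $s=(\sh\bt_i)_{i\in V}$. Every finite restriction of $\Pi_0(B)$ is then positive semidefinite of rank at most one. This agrees with Theorem~\ref{T:LG-ker-char}, which also gives the positive semidefiniteness directly, with realization dimension $1$.

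I expect no real obstacle. The only point needing care is the parent–child bookkeeping, $P(0,j)=P(0,i)\cup\{e\}$, which relies on uniqueness of tree paths.
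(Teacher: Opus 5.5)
Your proposal is correct and follows the paper's proof essentially step for step. You realize $B$ by the points $u_i=(\ch\bt_i,\sh\bt_i)\in\Lob(\R)$ using the hyperbolic difference identity, check the prescribed entries through the parent--child decomposition $P(0,j)=P(0,i)\sqcup\{\{i,j\}\}$ together with evenness of $\ch$, and read off $\Pi_0(B)=ss^t$ with $s_i=\sh\bt_i$, exactly as the paper does.
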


The geometric meaning of the edge-signing, the resulting dependence of the completion on the edge-signing, and an algorithm for constructing the realization are discussed in Section~\ref{S:geodesicrectification}.

\subsubsection{Products along tree paths}\label{SS:prod-dist}

A second tree completion is given by the product-distance matrix of Bapat and Sivasubramanian \cite{BapatSivasubramanian2012}, which is related to the exponential distance matrices studied by Bapat, Lal, and Pati \cite{BapatLalPati2006}. The $(i,j)$-entry of this matrix is the product of the prescribed entries along the unique path from $i$ to $j$. Consequently, its entrywise logarithm yields the additive tree pseudometric with edge lengths $\log a_e$, leading to further connections with the classical four-point condition, as noted later.

\begin{utheorem}[Product-distance Lorentz-Gram completion]
\label{T:tree-path-product-completion}
Let $V$ be a nonempty possibly infinite set, and let $A=(a_{ij})_{i,j\in V}$ be a partial symmetric matrix whose specification graph is a tree $G=(V,E)$. Assume that $a_{ii}=1$ for all $i\in V$ and $a_{ij}\geq1$ for all $\{i,j\}\in E$. For an edge $e=\{i,j\}$, write $a_e:=a_{ij}$. For $i,j\in V$, let $P(i,j)$ denote the unique path from $i$ to $j$, viewed as its set of edges, and define
\begin{align*}
\label{eq:canonical-tree-completion}
b_{ij}:=\prod_{e\in P(i,j)}a_e,
\end{align*}
where the empty product is equal to $1$. Then $B=(b_{ij})_{i,j\in V}$ is a Lorentz-Gram completion of $A$. Moreover, $\log [B]:=(\log b_{ij})_{i,j\in V}$ is exactly the additive tree pseudometric with edge lengths $\log a_e$, and it is a metric when each $a_e>1$.
\end{utheorem}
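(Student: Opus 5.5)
The plan is to prove the Lorentz-Gram property through the anchored criterion of Theorem~\ref{T:LG-ker-char}, and to do so by an explicit realization built by ``orthogonal innovation'' along the tree.

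First note that $B$ is a completion of $A$. For an edge $\{i,j\}$ the path $P(i,j)$ is that single edge, so $b_{ij}=a_{ij}$. Also $b_{ii}=1$, and $b_{ij}\ge 1$ because every factor $a_e\ge1$. This puts us in the setting of Theorem~\ref{T:LG-ker-char}.

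\textbf{Construction of the points.} Root the tree at a vertex $0$. Take $H=\ell^2(E)$ with orthonormal basis $(f_e)_{e\in E}$, and write $\sigma_e:=\sqrt{a_e^2-1}\ge0$. Define $u_0:=(1,0)\in\Lob(H)$. If $j$ is a child of $i$ via the edge $e$, set
\[
u_j:=a_e u_i+\sigma_e(0,f_e).
\]
Each $u_j$ is defined after finitely many steps, since every vertex lies at finite depth.

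\textbf{Properties of the points.} By induction along root paths, the vector $u_i$ involves only the basis directions $f_{e'}$ with $e'\in P(0,i)$. Hence $(0,f_e)$ is Lorentz-orthogonal to $u_i$ and to every $u_k$ with $k$ not in the subtree below $e$. Then
\[
[u_j,u_j]=a_e^2-\sigma_e^2=1 .
\]
The first coordinate of $u_j$ is $a_e$ times that of $u_i$, hence positive, so $u_j\in\Lob(H)$.

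\textbf{Gram entries.} Take $i,j$ with $m$ the last common vertex of $P(0,i)$ and $P(0,j)$. Expanding $u_i$ recursively from $m$ gives $u_i=\bigl(\prod_{e\in P(m,i)}a_e\bigr)u_m+w_i$. Here $w_i$ lies in the span of the vectors $(0,f_e)$ with $e\in P(m,i)$, and similarly for $j$. The edge sets $P(m,i)$ and $P(m,j)$ are disjoint, and $u_m$ is orthogonal to all of these innovation directions. Therefore
\[
[u_i,u_j]=\prod_{P(m,i)}a_e\prod_{P(m,j)}a_e\,[u_m,u_m]=b_{ij}.
\]

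The main obstacle is this orthogonality bookkeeping. The key point is that $w_i$ and $w_j$ use disjoint basis vectors, which are also disjoint from the support of $u_m$. The cleanest way to state it is as an induction invariant: $u_i\in\Span\{(1,0)\}\oplus\Span\{(0,f_e):e\in P(0,i)\}$. For infinite $V$ nothing changes, because each computation involves only finitely many vertices.

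\textbf{Alternative route.} One could instead verify condition (2) of Theorem~\ref{T:LG-ker-char} directly. For $j$ in the subtree, $p^{(0)}_{ij}=b_{i0}b_{j0}-b_{ij}=b_{m0}^2b_{mi}b_{mj}-b_{mi}b_{mj}=(b_{m0}^2-1)b_{mi}b_{mj}$. This expresses $P_0$ as a sum over edges $e$ of rank-one positive terms. The explicit realization above already exhibits this decomposition, so the alternative is only a check.

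\textbf{The metric claim.} Multiplicativity gives $\log b_{ij}=\sum_{e\in P(i,j)}\log a_e$, which is by definition the path-length pseudometric on the tree with edge lengths $\log a_e\ge0$. If each $a_e>1$, then for $i\neq j$ the path $P(i,j)$ is nonempty, so $\log b_{ij}>0$ and the pseudometric is a metric.
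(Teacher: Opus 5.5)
Your proof is correct. Its core is the paper's own construction: the orthogonal-innovation recursion $u_j=a_eu_i+\sqrt{a_e^2-1}\,(\text{fresh negative unit vector})$ along a rooted tree.

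The substantive difference is how you handle infinite $V$. The paper runs the recursion only for finite $V$. There it chooses each $e_k$ orthogonal to everything constructed so far, and gets the Gram entries by iterating $[u_{v_k},u_{v_j}]=a_{v_{\pi(k)}v_k}[u_{v_{\pi(k)}},u_{v_j}]$ along paths. For infinite $V$ it then passes to finite subtrees $T_F$, concludes that $B$ has one positive square, and invokes Theorem~\ref{T:LG-ker-char} to get a realization; the direction (4)$\Rightarrow$(1) of that theorem rests on Krein's embedding theorem.

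You instead index the innovation directions by the edges, in $\ell^2(E)$. This makes orthogonality automatic, and since every vertex has finite depth, it yields a realization for arbitrary $V$ in one stroke. Your lowest-common-ancestor splitting $P(i,j)=P(m,i)\sqcup P(m,j)$ then gives all Gram entries at once. For infinite trees this route is more elementary and more explicit: it bypasses the kernel criterion and Krein's theorem, and exhibits the points in a Hilbert space of Hilbert dimension $|E|$. The paper's phrasing in terms of a finite elimination ordering is what it later reuses for Algorithm~\ref{alg:chordal-orthogonal-innovation} on chordal graphs.

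One cosmetic point: despite your opening sentence, you never use the anchored criterion. Your explicit points verify condition (1) of Theorem~\ref{T:LG-ker-char} directly, so that theorem is not needed at all.
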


In summary, the aforementioned product-distance completion preserves the prescribed hyperbolic lengths $\ach(a_e)$ on the edges, while its logarithm recovers the additive tree geometry associated with the transformed edge lengths $\log a_e$.

\begin{remark}[Connection with classical tree metrics]
\label{R:product-distance-tree-metrics}
As noted above, the entrywise logarithm of the completion in Theorem~\ref{T:tree-path-product-completion} recovers an additive tree pseudometric:
\[
\delta(i,j):=\log b_{ij}
=\sum_{e\in P(i,j)}w_e,
\qquad
w_e:=\log a_e\geq0.
\]
If every $a_e>1$, then all $w_e>0$, so $\delta$ is a metric. These weights differ from the prescribed hyperbolic edge lengths $\lambda_e=\operatorname{arcosh}(a_e)$: explicitly, $w_e=\log\cosh\lambda_e$. The tree structure implies that $\delta$ satisfies the \emph{four-point condition}:
\[
\delta(i,j)+\delta(k,\ell)
\leq
\max\bigl\{
\delta(i,k)+\delta(j,\ell),
\delta(i,\ell)+\delta(j,k)
\bigr\}
\]
for all $i,j,k,\ell\in V$, including when some edge weights vanish. Equivalently, among
\[
b_{ij}b_{k\ell},
\qquad
b_{ik}b_{j\ell},
\qquad
b_{i\ell}b_{jk},
\]
the two largest quantities are equal. Thus the classical four-point condition has a direct multiplicative expression in the entries of the completion.

Conversely, Buneman's theorem~\cite[Theorem~2]{Buneman} states that a metric on a finite set satisfies the four-point condition if and only if it is induced by a positively weighted tree containing that set among its vertices; additional vertices may be necessary. In particular, every finite tree metric $d$ gives a Lorentz-Gram matrix $(e^{d(i,j)})_{i,j}$: apply Theorem~\ref{T:tree-path-product-completion} to a realizing tree with edge entries $a_e=e^{w_e}$, and restrict the resulting matrix to the original index set.

An earlier discrete counterpart is due to Zarecki\u{\i}~\cite{Zareckii}: an integer-valued metric on a finite set is realizable by distances between distinguished vertices of a finite unweighted tree precisely when it satisfies the four-point condition and every sum $d(i,j)+d(j,k)+d(k,i)$ is even. His proof also gives an explicit reconstruction algorithm.
\end{remark}

The completion in Theorem~\ref{T:tree-path-product-completion} depends only on the underlying tree, and not on the choice of a root or the construction procedure, unlike the completion in Theorem~\ref{T:tree-completion}. Moreover, if $V$ is finite and every prescribed edge entry is strictly greater than one, the completion is nonsingular, and its inverse is supported on the diagonal and the edges of the tree. Its determinant, inertia, and inverse, as well as the comparison between its induced hyperbolic distance and the corresponding logarithmic tree distance, are all explicit.

\begin{proposition}[Properties of $B$ in Theorem~\ref{T:tree-path-product-completion}]
\label{P:tree-product-distance-structure}
The following statements hold about the Lorentz-Gram completion $B=(b_{ij})_{i,j\in V}$ in Theorem~\ref{T:tree-path-product-completion} whenever $V$ is finite.

\begin{enumerate}
\item
The matrix $B$ is congruent to $(1)\oplus\bigoplus_{e\in E}(1-a_e^2)$, and its determinant is given by $\det B=\prod_{e\in E}(1-a_e^2)$. Moreover, if $E_{>}:=\{e\in E:a_e>1\}$ and $E_{=}:=\{e\in E:a_e=1\}$, then its inertia is given by $\inn(B)= \bigl(1,|E_{>}|,|E_{=}|\bigr)$.

\item
Therefore, $B$ is nonsingular if and only if $a_e>1$ for every $e\in E$. In this case, its inverse is supported on the diagonal and the edges of $G$, and is given by
\[
(B^{-1})_{ij}
=
\begin{cases}
\displaystyle
1+\sum_{\ell\sim i} {a_{i\ell}^2}{(1-a_{i\ell}^2)^{-1}},
& i=j,\\
\displaystyle
-{a_{ij}}{(1-a_{ij}^2)^{-1}},
& \{i,j\}\in E,\\
0,
& \text{otherwise}.
\end{cases}
\]

\item The induced hyperbolic pseudometric lies within $\log 2$ of the logarithmic tree pseudometric. More precisely, the hyperbolic distance $d_B(i,j):=\ach (b_{ij})$ determined by $B$ satisfies
\[
\delta(i,j)
\leq
d_B(i,j)
\leq
\delta(i,j)+\log 2,
\]
where $\delta(i,j):=\log b_{ij}$ is the additive tree pseudometric induced by the edge weights $\log a_e$; it is a metric when $a_e>1$ for every $e\in E$.
\end{enumerate}
\end{proposition}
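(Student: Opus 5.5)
The plan is to prove all three items by induction on $|V|$, peeling off a leaf and applying an explicit elementary congruence.

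\textbf{Item (1).} Let $\ell$ be a leaf of the finite tree $G$, attached to the unique neighbour $p$, and write $a:=a_{\ell p}$. For every $j\neq\ell$, the path $P(\ell,j)$ is the edge $\{\ell,p\}$ followed by $P(p,j)$. Hence
\[
b_{\ell j}=a\,b_{pj}.
\]
Thus the row of $\ell$ equals $a$ times the row of $p$, except in the $\ell$-th entry, which is $1$. Subtract $a$ times row $p$ from row $\ell$, and likewise for the columns. This congruence transforms $B$ into $B'\oplus(1-a^2)$, where $B'$ is the product-distance completion of the tree $G-\ell$. The diagonal entry comes out as $1-2a\cdot a+a^2\cdot 1=1-a^2$. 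Iterating down to a single vertex gives the congruence to $(1)\oplus\bigoplus_{e\in E}(1-a_e^2)$. The elimination matrix is unit triangular, so the determinant is preserved, and Sylvester's law of inertia then gives $\inn(B)=(1,|E_>|,|E_=|)$.

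\textbf{Item (2).} Nonsingularity is immediate from (1). For the inverse, I will write $B=L^{t}DL$, or $B=LDL^{t}$, where $L$ records the leaf-elimination steps. Ordering the vertices as a rooted tree with parent map $p(\cdot)$, the factor is $L=I-N$, where $N$ has entries $a_{i,p(i)}$ in position $(i,p(i))$. Then
\[
B^{-1}=L^{t}D^{-1}L,
\]
which is supported on the diagonal and the tree edges. Expanding entrywise gives the off-diagonal entry $-a_{ij}/(1-a_{ij}^2)$. The diagonal entry is $1/(1-a_{i,p(i)}^2)$ plus $\sum_{\text{children }c} a_{ic}^2/(1-a_{ic}^2)$. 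Using $1/(1-x^2)=1+x^2/(1-x^2)$, this becomes the symmetric formula $1+\sum_{\ell\sim i}a_{i\ell}^2/(1-a_{i\ell}^2)$. At the root the parent term is $1/1=1$, so the formula holds there too. As a safeguard against index bookkeeping errors, I will verify $BX=I$ directly with the candidate $X$, using $b_{\ell j}=a_{\ell i}b_{ij}$ for neighbours $\ell$ of $i$ whenever $j$ lies on the other side. This check is the main place where care is needed.

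\textbf{Item (3).} Set $x:=b_{ij}\geq1$. Then $d_B=\ach x=\log\bigl(x+\sqrt{x^2-1}\bigr)$. Since
\[
x\le x+\sqrt{x^2-1}\le 2x,
\]
taking logarithms gives $\delta\le d_B\le\delta+\log2$. The fact that $\delta$ is a metric when every $a_e>1$ was already noted in Theorem~\ref{T:tree-path-product-completion}.
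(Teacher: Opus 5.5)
Your proposal is correct. Items (1) and (3) follow the paper's route. The paper proves the congruence and inertia by the same successive leaf elimination, both as the row/column operation $R_{v_k}\leftarrow R_{v_k}-a_{v_{\pi(k)}v_k}R_{v_{\pi(k)}}$ and as the equivalent unit-triangular factorization. It proves (3) from $\log x\le\ach x\le\log x+\log 2$ with $x=b_{ij}$, exactly as you do.

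The difference is in the determinant and the inverse. The paper does not derive these from the elimination; it cites Bapat--Sivasubramanian (Lemma~2.1 with $q_e=t_e=a_e$ for $\det B$, and Theorem~2.2 for $B^{-1}$). You extract both from the same elimination: $\det B$ because the congruence is unimodular, and $B^{-1}$ from the global triangular factorization. Your route makes the proposition self-contained. It also shows that the determinant, inertia, and inverse sparsity all come from one factorization whose triangular factor is supported on the tree edges. This is the same mechanism the paper later uses for chordal graphs in the orthogonal-innovation part of Theorem~\ref{T:chordal-canonicity}. The citation route is shorter and places the result inside the more general, not necessarily symmetric, product-distance theory.

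One point needs tidying. With $L=I-N$ as you define it, the correct identity is $LBL^{t}=D$, i.e.\ $B=L^{-1}DL^{-t}$, and only from this does $B^{-1}=L^{t}D^{-1}L$ follow. Your phrase ``$B=L^tDL$, or $B=LDL^t$'' would, taken literally, give $B^{-1}=L^{-1}D^{-1}L^{-t}$, which is not edge-supported.

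To justify $LBL^t=D$, you can argue in either of two ways:
\begin{enumerate}
\item Composed in the order in which the leaves are eliminated ($L=E_2E_3\cdots E_n$ with $E_k=I-a_{v_{\pi(k)}v_k}e_ke_{\pi(k)}^t$), the elementary factors multiply to $I-N$ with no cross terms, since each parent precedes its children.
\item Directly, $(LB)_{ij}=b_{ij}-a_{i,p(i)}b_{p(i)j}=0$ whenever $j$ precedes $i$. So $LB$ is upper triangular, and $LBL^t$ is symmetric and upper triangular, hence diagonal with entries $1-a_{i,p(i)}^2$.
\end{enumerate}
Your fallback check $BX=I$ also works. For $j\neq i$, exactly one neighbour $\ell_0$ of $i$ lies on $P(i,j)$. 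The other neighbours satisfy $b_{\ell j}=a_{i\ell}b_{ij}$, while $b_{\ell_0 j}=b_{ij}/a_{i\ell_0}$. The contributions then cancel because $\frac{1}{1-x^2}-\frac{x^2}{1-x^2}=1$.
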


The orthogonal innovation realization of this completion, together with an algorithm for constructing the realizing points, is developed in Subsection~\ref{SS:orthogonal-innovation}.

\subsubsection{Transfers along clique-tree paths}

Let $G$ now be a simple, finite, connected, and chordal graph. Its maximal cliques can be organized into a clique-tree, whose edges record the clique separators through which the maximal cliques meet \cite{Buneman1974Rigid,Gavril1974,BlairPeyton1993}. The unique path between two maximal cliques then plays the role of the unique vertex path in a tree. Accordingly, the scalar path products of Theorem~\ref{T:tree-path-product-completion} are replaced by matrix-valued products through the successive clique-tree separators.

\begin{defn}[Clique-trees and separators]
Let $G=(V,E)$ be a finite connected chordal graph, and let $\mathcal C(G)$ denote the family of its maximal cliques. A clique-tree of $G$ is a tree $\ct$ whose vertex set is $\mathcal C(G)$ and such that, for every $v\in V$, the maximal cliques containing $v$ form a connected subtree of $\ct$. For each edge $CD$ of $\ct$, the set $C\cap D$ is called the clique-tree separator, or simply the separator, associated with $CD$.
\end{defn}

Every finite connected chordal graph admits a clique-tree. Deleting an edge $CD$ from $\ct $ separates $\ct$ into two components. The unions of the cliques in these two components meet precisely in $C\cap D$, and there are no edges of $G$ between their respective complements of $C\cap D$. The clique-tree characterization of chordal graphs and its further basic properties are recalled in Theorem~\ref{T:clique-trees}.

\begin{utheorem}[Clique-product Lorentz-Gram completion]
\label{T:clique-product-completion}
Let $A=(a_{ij})_{i,j\in V}$ be a partial symmetric matrix whose specification graph $G=(V,E)$ is finite, connected, and chordal, and let $\ct$ be a clique-tree of $G$. Assume that $A_{C\times C}$ is a nonsingular Lorentz-Gram matrix for every maximal clique $C$ of $G$. Fix $i,j\in V$, and choose maximal cliques $C_0$ and $C_m$ containing $i$ and $j$, respectively. If $C_0=C_m$, set $b_{ij}:=a_{ij}$. Otherwise, let $C_0,C_1,\ldots,C_m$ be the unique path from $C_0$ to $C_m$ in $\ct$, and let $S_r:=C_{r-1}\cap C_r$ be the clique separators for $r=1,\ldots,m$. Define
\[
b_{ij}:=
A_{\{i\}\times S_1}A_{S_1\times S_1}^{-1}
A_{S_1\times S_2}A_{S_2\times S_2}^{-1}
\cdots
A_{S_{m-1}\times S_m}A_{S_m\times S_m}^{-1}
A_{S_m\times\{j\}},
\]
with the evident shortened form when $m=1$. Then $b_{ij}$ is independent of the choices of $C_0$ and $C_m$, and $B=(b_{ij})_{i,j\in V}$ is a nonsingular Lorentz-Gram completion of $A$.
\end{utheorem}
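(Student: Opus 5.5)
We will prove Theorem~\ref{T:clique-product-completion} by induction on the number of maximal cliques, peeling off a leaf of the clique-tree $\ct$. At each step we glue two Lorentz-Gram matrices along a separator by the "zero Schur complement" rule $B_{XY}:=B_{XS}B_{SS}^{-1}B_{SY}$. The path formula is then obtained by unwinding this recursion.

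\emph{Gluing lemma.} Let $V=X\sqcup S\sqcup Y$ with $S\neq\emptyset$. Let $M_1$ on $X\cup S$ and $M_2$ on $S\cup Y$ be nonsingular Lorentz-Gram matrices that agree on $S\times S$. Define $B$ by $M_1$, $M_2$ on their blocks and $B_{XY}:=(M_1)_{XS}(M_1)_{SS}^{-1}(M_2)_{SY}$. We claim $B$ is a nonsingular Lorentz-Gram matrix.

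For the inertia, note that by Theorem~\ref{T:LG-ker-char} and nonsingularity each of $M_1$, $M_2$, $B_{SS}$ has inertia $(1,\,\text{size}-1,\,0)$. By Haynsworth additivity, the Schur complement of $B_{SS}$ in $M_1$ (respectively $M_2$) is therefore negative definite. The choice of $B_{XY}$ makes the $X\times Y$ block of the Schur complement of $B_{SS}$ in $B$ vanish. That Schur complement is thus the direct sum of the two negative definite ones, and Haynsworth again gives $\inn(B)=(1,|V|-1,0)$.

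To apply Theorem~\ref{T:LG-ker-char} we still need $b_{ij}\ge 1$. Since $B$ has one positive square and unit diagonal, it is the Gram matrix of vectors $u_i\in\R^{1,|V|-1}$ with $[u_i,u_i]=1$. Each $u_i$ lies on one sheet of the hyperboloid, and for any two such vectors $|[u_i,u_j]|\ge1$ (reverse Cauchy--Schwarz), with sign $+$ exactly when they lie on the same sheet. Fix $s\in S$. Every $x\in X$ and $y\in Y$ has $b_{xs},b_{sy}\ge1$, so all vectors lie on the sheet of $u_s$, and hence $b_{xy}\ge1$. Thus $B$ is a nonsingular Lorentz-Gram matrix.

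\emph{Induction.} If $\ct$ has one node, $B=A$. Otherwise choose a leaf $C$ of $\ct$ with neighbour $D$, and set $S:=C\cap D$ and $X:=C\setminus D$. Then $S\neq\emptyset$ because $G$ is connected. By the clique-tree properties (Theorem~\ref{T:clique-trees}), no vertex of $X$ lies in any other maximal clique, and no edge joins $X$ to $Y:=V\setminus C$. The graph $G-X$ is chordal and connected, with clique-tree $\ct-C$. By induction its clique-product matrix $B'$ is a nonsingular Lorentz-Gram completion. Apply the gluing lemma to $M_1=A_{C\times C}$ and $M_2=B'$ to obtain $B$.

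It remains to see that $B$ agrees with the path formula. For $x\in X$ and $y\in Y$, the $\ct$-path from $C$ to any clique containing $y$ passes through $D$. Its first separator is $S_1=S$, and the remaining separators form the path from $D$ in $\ct-C$. Since $S\subseteq D$, induction gives each entry $B'_{s y}$, $s\in S$, by the path formula starting at $D$. Substituting this into $A_{xS}A_{SS}^{-1}B'_{Sy}$ yields exactly the stated product.

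\emph{Independence of choices.} Suppose $i\in C_0\cap C_0'$. By the running-intersection property, $i$ lies in every clique on the $\ct$-path between them, and in particular $i\in S_1$ whenever that path is traversed. Then $A_{\{i\}\times S_1}A_{S_1\times S_1}^{-1}=e_i^t$. Hence prepending or deleting path segments lying inside the cliques containing $i$ leaves $b_{ij}$ unchanged; for example, the product from $C_0$ collapses to the product from the first clique on the path to $C_m$. The same argument applies symmetrically at $j$. The case $C_0=C_m$ is consistent for the same reason.

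The main obstacle I expect is this bookkeeping: matching the recursive gluing with the matrix path product, and showing independence of the chosen cliques, both of which rest on the identity $A_{\{i\}\times S}A_{S\times S}^{-1}=e_i^t$ for $i\in S$. The inertia argument itself is routine.
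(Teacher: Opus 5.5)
Your proposal is correct and is essentially the paper's proof, run as leaf-peeling instead of root-first attachment. Your gluing rule $B_{XY}=(M_1)_{XS}(M_1)_{SS}^{-1}(M_2)_{SY}$ is exactly the paper's attachment $B_{R_r\times W_{r-1}}=A_{R_r\times S_r}A_{S_r\times S_r}^{-1}B_{S_r\times W_{r-1}}$. The inertia count is the same Haynsworth argument: you take the Schur complement of $B_{SS}$ and get a block-diagonal negative definite matrix, whereas the paper takes the complement of the previously built block and identifies it with $D_r$. The path formula is then unwound with the same identity $A_{\{i\}\times S}A_{S\times S}^{-1}=e_i^t$ for $i\in S$.

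The only real variation is that you prove independence of $C_0,C_m$ by collapsing the path directly, rather than by the paper's strengthened induction over all pairs of processed cliques. One terminology correction: the fact you use there is the induced-subtree (clique-intersection) property, not the running-intersection property.
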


The clique-product completion has canonical and structural properties, summarized next. Central to these is the well-known inverse formula of Johnson and Lundquist \cite{Johnson-Lundquist}, expressed in terms of maximal cliques and clique separators, and the geometric realization through the simplicial elimination characterization of chordality.

\begin{utheorem}[Uniqueness and orthogonal innovation realization]
\label{T:chordal-canonicity}
Retain the hypotheses of Theorem~\ref{T:clique-product-completion}, and let $n:=|V|$. Fix a clique-tree $\ct$ of $G$, and denote by $B_{\ct}$ the corresponding clique-product completion. Then:

\begin{enumerate}
\item The inverse of $B_{\ct}$ is given by the Johnson--Lunquist clique separator formula \cite{Johnson-Lundquist}:
\begin{align}\label{eq:john-lund-2}
B_{\ct}^{-1}
&=
\sum_{C\in\mathcal C(G)}
\iota_C\!\left(A_{C\times C}^{-1}\right)
-
\sum_{CD\in E(\ct)}
\iota_{C\cap D}\!\left(
A_{(C\cap D)\times(C\cap D)}^{-1}
\right).
\end{align}
Here, for $I\subseteq V$ and a matrix $X$ indexed by $I$, let $\iota_I(X)$ denote its extension to a $V\times V$ matrix by zeros outside $I\times I$. In particular, the inverse of $B_{\ct}$ is supported on the diagonal and the edges of $G$; equivalently, $(B_{\ct}^{-1})_{ij}=0$ whenever $i\neq j$ and $\{i,j\}\notin E$. Furthermore, the determinant of $B_{\ct}$ is given by

\begin{align}\label{eq:det-clique-tree-comp}
\det B_{\ct}=\bigg{[}{\displaystyle\prod_{C\in\mathcal C(G)}\det A_{C\times C}}\bigg{]}~ 
\bigg{[}{\displaystyle\prod_{CD\in E(\ct)}\det A_{(C\cap D)\times(C\cap D)}}\bigg{]}^{-1}.
\end{align}
In both formulas, a clique separator $C\cap D$ is counted once for each clique-tree edge $CD$ on which it occurs.

\item
The matrix $B_{\ct}$ is the unique nonsingular Lorentz-Gram completion $C$ of $A$ whose inverse is supported on the diagonal and the edges of $G$. Consequently, $B_{\ct}$ is independent of the choice of $\ct$.

\item
For every simplicial elimination ordering of $G$ (Theorem~\ref{T:chordal-seo}), Algorithm~\ref{alg:chordal-orthogonal-innovation} is well defined and produces vectors $(u_v)_{v\in V}\subseteq\Lob(\R^{n-1})$ with Lorentz-Gram matrix $B$. In particular, the resulting Lorentz-Gram matrix is independent of the chosen simplicial elimination ordering employed in Algorithm~\ref{alg:chordal-orthogonal-innovation}.
\end{enumerate}
\end{utheorem}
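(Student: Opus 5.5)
The plan is to run an induction on the number of maximal cliques, peeling off a leaf clique of $\ct$, and to prove parts (1), (2), (3) in that order.

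\textbf{Part (1).} Let $L$ be a leaf of $\ct$ with neighbour $D$. Put $S:=L\cap D$, $R:=L\setminus S$, and $W:=V\setminus R$. By the clique-tree separation property recalled after the definition of clique-trees, no vertex of $R$ is adjacent to any vertex of $W\setminus S$. The restriction of $A$ to $W$ has the chordal specification graph $G[W]$, and $\ct\setminus\{L\}$ is a clique-tree for it. The key observation is that the definition in Theorem~\ref{T:clique-product-completion} gives, for $r\in R$ and $w\in W\setminus S$,
\[
b_{rw}=A_{\{r\}\times S}A_{S\times S}^{-1}(B_W)_{S\times\{w\}},
\]
where $B_W$ is the clique-product completion on $W$. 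This holds because every clique-tree path from $L$ passes first through $S$, and $(B_W)_{S\times\{w\}}$ is exactly the remaining product. So $B_{\ct}$ is the standard one-step ``central'' extension of $B_W$ and $A_{L\times L}$, which overlap in $A_{S\times S}$. For block $3\times3$ matrices of this form, a direct Schur-complement computation gives the classical identities
\[
B^{-1}=\iota_W(B_W^{-1})+\iota_L(A_{L\times L}^{-1})-\iota_S(A_{S\times S}^{-1}),\qquad
\det B=\frac{\det B_W\,\det A_{L\times L}}{\det A_{S\times S}}.
\]
These identities need only invertibility of the three blocks; nonsingularity of $A_{S\times S}$ holds because it is a principal submatrix of a nonsingular Lorentz-Gram matrix, so its anchored transform is positive definite. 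Induction then gives \eqref{eq:john-lund-2} and \eqref{eq:det-clique-tree-comp}, and the support statement follows since each summand is supported on a clique.

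\textbf{Part (2).} Suppose $C'$ is any nonsingular Lorentz-Gram completion whose inverse vanishes off $E\cup\operatorname{diag}$. Again use the leaf $L$. The vanishing of $(C'^{-1})_{R\times(W\setminus S)}$, combined with the block inverse formula, forces
\[
C'_{R\times(W\setminus S)}=C'_{R\times S}\,C'^{-1}_{S\times S}\,C'_{S\times(W\setminus S)},
\]
that is, the same central-extension relation. To run the induction I also need that $C'_W$ has inverse supported on $G[W]$. This follows because $C'^{-1}_W$ is the Schur complement of $C'^{-1}$ with respect to the $R$-block, and the correction term $(C'^{-1})_{W\times R}(\cdots)^{-1}(C'^{-1})_{R\times W}$ is supported on $S\times S$, which is a clique. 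Induction then gives $C'_W=B_W$, hence $C'=B_{\ct}$. Independence of $\ct$ follows immediately.

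\textbf{Part (3).} For a simplicial elimination ordering, process the vertices in reverse elimination order. Each newly added vertex $v$ has its earlier neighbourhood $N$, a clique. I expect Algorithm~\ref{alg:chordal-orthogonal-innovation} to set
\[
u_v=\sum_{w\in N}c_w u_w+\sigma e_{\text{new}},
\]
where the coefficients are $c=A_{N\times N}^{-1}A_{N\times\{v\}}$ and $e_{\text{new}}$ is a fresh orthogonal (spacelike) direction. Then
\[
\sigma^2=c^{t}A_{N\times N}c-1=-\bigl(1-A_{\{v\}\times N}A_{N\times N}^{-1}A_{N\times\{v\}}\bigr),
\]
which is the negated Schur complement of $A_{N\times N}$ in $A_{(N\cup v)\times(N\cup v)}$. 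Since that matrix is nonsingular Lorentz-Gram with one positive square carried by $N$, the Schur complement is negative, so $\sigma>0$ is real. Choosing the positive root keeps $u_v$ on the positive sheet. This is exactly the one-step central extension again, so the Gram matrix at each stage equals the canonical completion on the processed vertices; by (2) it equals $B$, independently of the ordering. One direction is added per vertex beyond the first, giving $\Lob(\R^{n-1})$.

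\textbf{Main obstacle.} I expect the hardest step to be verifying that $u_v$ lies on the upper sheet and that the Lorentz-Gram property and nonsingularity persist under each gluing, which requires the inertia count. A Haynsworth argument shows that $B$ has inertia $\bigl(1,n-1,0\bigr)$, and Theorem~\ref{T:LG-ker-char}(4) then supplies the Lorentz-Gram property.
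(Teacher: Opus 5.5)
\textbf{Parts (1) and (2).} Your treatment of (1) is the paper's argument in reverse order. Peeling a leaf clique $L$ of $\ct$ is the last clique attachment in the proof of Theorem~\ref{T:clique-product-completion}. Your ``key observation'' is Step~4 of that proof read backwards, and your three-block identities are exactly \eqref{eq:john-lund} and the Schur determinant step.

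Part (2) is where you take a genuinely different route. The paper applies the Johnson--Lundquist theorem directly to an arbitrary competitor $C$: an invertible matrix whose inverse has the chordal zero pattern and whose clique and separator blocks are invertible. It concludes that $C^{-1}$ equals the same clique--separator sum as $B_{\ct}^{-1}$. You instead reprove the needed case by leaf peeling. Write $Y=C'^{-1}$ and $U=W\setminus S$.
\begin{itemize}
\item The zero block $Y_{R\times U}$ is equivalent to the central-extension relation, because the $(U\cup R)$-principal block of $Y$ is the inverse of the Schur complement of $A_{S\times S}$ in $C'$.
\item Sparsity passes to $(C'_{W\times W})^{-1}=Y_{W\times W}-Y_{W\times R}Y_{R\times R}^{-1}Y_{R\times W}$, since $Y_{U\times R}=0$ confines the correction to the clique $S$.
\end{itemize}
This is correct. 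You should add that $Y_{R\times R}$ is invertible because $C'_{W\times W}$ is nonsingular, being the Lorentz-Gram matrix of a linearly independent subfamily. Your route is self-contained: it removes the external citation and shows that only nonsingularity of the relevant principal submatrices is used. The paper's route is a single line once Johnson--Lundquist is granted.

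\textbf{Part (3) has a faulty step.} The sign of $\sigma$ does not select the sheet. Since $[p+\sigma e_{\mathrm{new}},\,p-\sigma e_{\mathrm{new}}]=[p,p]+\sigma^2=1+2\sigma^2>0$, both roots give vectors on the same sheet.

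Your fallback in the ``main obstacle'' paragraph does not close this. Theorem~\ref{T:LG-ker-char} assumes $k_{ij}\ge 1$ for all $i,j$, which is precisely what is unknown for the completed entries. Inertia $(1,n-1,0)$ alone only places the realizing vectors on the two-sheeted hyperboloid. For $B_{\ct}$ itself, the Lorentz-Gram property is already part of Theorem~\ref{T:clique-product-completion}, so nothing is needed there.

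The correct argument is the paper's. Since $[u_v,u_w]=a_{vw}\ge 1>0$ for $w\in N$, the vector $u_v$ lies on the sheet of $u_w$; then induct from $u_{v_1}=(1,0)$. This requires $N\neq\varnothing$ for every $k\ge 2$, which you never verify. Your Schur-complement sign argument needs it too: if $N=\varnothing$ the formula gives $\sigma^2=-1$ and the algorithm is undefined. It holds because deleting a simplicial vertex from a connected graph keeps it connected, so every prefix $G[\{v_1,\ldots,v_k\}]$ is connected.

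Finally, to invoke (2) you should record explicitly that the inverse of the output Gram matrix vanishes off $E$. This follows from your three-block identity applied with the clique $N\cup\{v\}$. Argue it that way rather than by identifying each stage with a clique-tree attachment, since $N$ need not be a clique-tree separator of the prefix graph.
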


The inverse-sparsity characterization in Theorem~\ref{T:chordal-canonicity} has a natural variational interpretation. For nonsingular symmetric completions, stationarity of $X\mapsto\log|\det X|$ under variations of the unspecified entries is equivalent to vanishing of the corresponding entries of $X^{-1}$. Together with Theorem~\ref{T:chordal-canonicity}, this identifies the canonical clique-product completion as the unique stationary nonsingular Lorentz-Gram completion. Moreover, the negative-definite Schur complements arising in the clique attachments (later in the proofs) show that this completion uniquely maximizes the absolute determinant among all Lorentz-Gram completions.

\begin{utheorem}[Variational and maximum-absolute-determinant characterization]
\label{T:LG-variational-characterization}
Retain the hypotheses of Theorems~\ref{T:clique-product-completion} and \ref{T:chordal-canonicity}, and let $B$ be the canonical clique-product completion of $A$. Let $n:=|V|$ and define the following:
\[
\A(A)
:=
\left\{
X=X^t\in\R^{V\times V}:
x_{ij}=a_{ij}
\text{ whenever }i=j\text{ or }\{i,j\}\in E
\right\}.
\]
Suppose $\Phi(X):=\log|\det X|$. Then, for $X\in\A(A) \cap GL_n$, the following are equivalent:
\begin{enumerate}
\item
$X$ is a stationary point of $\Phi$ on $\A(A) \cap GL_n$; explicitly,
\[
\left.\frac{d}{dt}\Phi(X+tH)\right|_{t=0}=0
\]
for every symmetric $H=(h_{ij})_{i,j\in V}$ with $h_{ij}=0$ whenever $i=j$ or $\{i,j\}\in E$.

\item
The inverse $X^{-1}$ vanishes at every unspecified position:
\[
(X^{-1})_{ij}=0
\qquad
\text{whenever }i\neq j\text{ and }\{i,j\}\notin E.
\]
\end{enumerate}
Consequently, $B$ is the unique nonsingular Lorentz-Gram completion of $A$ that is stationary for $\Phi$ on $\A(A) \cap GL_n$. Moreover, $B$ uniquely maximizes the absolute determinant among all Lorentz-Gram completions of $A$: for every such completion $X$, $|\det X|\leq|\det B|$, with equality if and only if $X=B$.
\end{utheorem}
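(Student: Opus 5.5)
The proof has three parts: the first-order computation, the uniqueness of the stationary completion, and the determinant inequality, which needs a separate argument because $\Phi$ is not concave on the indefinite matrices involved.

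\emph{Equivalence of (1) and (2).} This is the standard Jacobi formula. For $X\in\A(A)\cap GL_n$ and an admissible symmetric direction $H$, we have
\[
\left.\frac{d}{dt}\log|\det(X+tH)|\right|_{t=0}=\operatorname{tr}(X^{-1}H)=2\sum_{\{i,j\}\notin E,\,i\neq j}(X^{-1})_{ij}h_{ij}.
\]
The factor $2$ uses the symmetry of $X^{-1}$ and $H$. The admissible $H$ are exactly the symmetric matrices supported on the unspecified positions. Taking $H=e_ie_j^t+e_je_i^t$ for each unspecified pair $\{i,j\}$ therefore gives (1)$\Rightarrow$(2), and the displayed formula gives the converse directly.

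\emph{Uniqueness of the stationary completion.} Combine the equivalence with Theorem~\ref{T:chordal-canonicity}(2). A nonsingular Lorentz-Gram completion is stationary exactly when its inverse is supported on the diagonal and the edges of $G$. By that theorem, the only such completion is $B$.

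\emph{Maximum absolute determinant.} I would argue by induction along a simplicial elimination ordering, equivalently by attaching maximal cliques one at a time along a leaf of the clique-tree. Let $X$ be any Lorentz-Gram completion. If $X$ is singular, then $|\det X|=0<|\det B|$, since $B$ is nonsingular. So assume $X$ is nonsingular.

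Let $v$ be a simplicial vertex, let $N=N(v)$ be its neighbourhood, so that $C=N\cup\{v\}$ is a maximal clique, and let $W=V\setminus\{v\}$. The principal submatrix $X_{W\times W}$ is again Lorentz-Gram and nonsingular, as a principal submatrix of a nonsingular matrix with one positive square. In fact it has one positive square and all its other eigenvalues negative, by interlacing together with nonsingularity of $X$ and the inertia count; this needs checking. The Schur complement formula gives
\[
\det X=\det X_{W\times W}\cdot s,\qquad s=1-x_{vW}X_{W\times W}^{-1}x_{Wv}.
\]
By inertia additivity, $s<0$, since $X$ has only one positive square and it is already carried by $X_{W\times W}$.

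Now write $x_{Wv}=(a_{Nv},y)$, where $y$ collects the free entries. The map $y\mapsto s(y)$ is a concave quadratic, because $X_{W\times W}^{-1}$ has one positive eigenvalue. This is the \textbf{main obstacle}: concavity is not automatic, and I need to control the sign of $X_{W\times W}^{-1}$ restricted to the free block. The cleaner route is to write $s$ as the Schur complement of $X_{C\times C}$ corrected by a term $-q(y-y_*)$, where $q$ is positive definite. Here $y_*=X_{(W\setminus N)\times N}A_{N\times N}^{-1}a_{Nv}$ is the clique-product choice, and $q$ is the inverse of the Schur complement of $A_{N\times N}$ in $X_{W\times W}$. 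That Schur complement is negative definite, because $A_{N\times N}$ already carries the positive square; this is the ``negative-definite Schur complement'' mentioned before the theorem. Hence
\[
s(y)=s_C-\bigl[(y-y_*)^t(-Q^{-1})(y-y_*)\bigr]\cdot(-1),
\]
where $s_C=\det A_{C\times C}/\det A_{N\times N}<0$. With the signs arranged correctly, this gives $|s(y)|\le|s_C|$, with equality if and only if $y=y_*$. The sign bookkeeping is where I expect the work to lie.

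Consequently,
\[
|\det X|\le|\det X_{W\times W}|\,\frac{|\det A_{C\times C}|}{|\det A_{N\times N}|}.
\]
The induction hypothesis applied to $X_{W\times W}$, a completion of $A_{W\times W}$ whose specification graph $G-v$ is chordal, gives $|\det X_{W\times W}|\le|\det B'|$, where $B'$ is the canonical completion of $A_{W\times W}$. Then formula \eqref{eq:det-clique-tree-comp} yields $|\det X|\le|\det B|$. Equality forces equality at every step, namely $X_{W\times W}=B'$ and $y=y_*$. Since the clique-product rule restricts to $W$ and extends by $y_*$, this identifies $X=B$.
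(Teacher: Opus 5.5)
Your proof is correct, and its core is the same as the paper's. The Schur complement $Q$ of the separator block $A_{N\times N}$ in $X_{W\times W}$ is negative definite by inertia additivity. Your identity $s(y)=s_C-(y-y_*)^tQ^{-1}(y-y_*)$ is exactly the paper's Fischer step in the case of a one-element residual set $R_r=\{v\}$. There Fischer's inequality reduces to $\det\begin{pmatrix}P&z\\ z^t&\kappa\end{pmatrix}=\det P\,(\kappa-z^tP^{-1}z)\le\kappa\det P$ for $P\succ0$, with equality iff $z=0$; take $P=-Q$, $z=-(y-y_*)$, $\kappa=-s_C$.

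The two arguments differ in bookkeeping, and each buys something:
\begin{itemize}
\item The paper attaches whole maximal cliques along a rooted clique-tree and telescopes along the attachment order that defines $B$. This needs Fischer's inequality with its equality case for general blocks, but never relates $B$ to canonical completions of subgraphs.
\item You remove one simplicial vertex and induct on $|V|$. This needs only the elementary rank-one bound. However, both your inequality and your equality case rest on two facts you only assert: $B_{W\times W}=B'$ and $B_{(W\setminus N)\times\{v\}}=B'_{(W\setminus N)\times N}A_{N\times N}^{-1}a_{Nv}$. These give $|\det B|=|\det B'|\,|s_C|$.
\end{itemize}

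Theorem~\ref{T:chordal-canonicity}(3) supplies those facts. Run Algorithm~\ref{alg:chordal-orthogonal-innovation} along a simplicial elimination ordering of $G$ ending at $v$. You get one by appending $v$ to any such ordering of $G-v$, which is connected, chordal, and has admissible clique data. The first $n-1$ vectors give a nonsingular Lorentz-Gram completion of $A_{W\times W}$ whose inverse vanishes on the nonedges of $G-v$. By uniqueness, this completion is $B'$. The last innovation step is exactly your transfer $y_*$, with scalar Schur complement $1-g_n^tA_n^{-1}g_n=s_C$.

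The same observation removes the need for subgraph completions altogether. Put $W_k:=\{v_1,\ldots,v_k\}$, let $s_k$ be the Schur complement of $X_{W_{k-1}\times W_{k-1}}$ in $X_{W_k\times W_k}$, and let $\delta_k:=1-g_k^tA_k^{-1}g_k$. Your one-step bound applies at each stage, since $S_k\neq\varnothing$. It gives $|\det X|=\prod_{k\ge2}|s_k|\le\prod_{k\ge2}|\delta_k|=|\det B|$. So your argument pairs naturally with the orthogonal-innovation realization, as the paper's pairs with the clique-tree attachment.

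Two small corrections:
\begin{itemize}
\item $s$ is convex, not concave, in $y$. The $(W\setminus N)\times(W\setminus N)$ block of $X_{W\times W}^{-1}$ is $Q^{-1}\prec0$. Hence $s$ attains its minimum $s_C$ at $y_*$, and it is $|s|=-s$ (using $s<0$) that is maximized there.
\item Nonsingularity of $X_{W\times W}$ follows directly from Lemma~\ref{lem:sign-inn}: any subfamily of a linearly independent realizing family is linearly independent. Being a principal submatrix of a nonsingular matrix is not enough on its own.
\end{itemize}
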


\begin{remark}[Inverse sparsity, maximum entropy, and moment matrices]\label{rem:covariance-moment-comparision}

We conclude this subsection by highlighting connections involving the clique-product completion and its characterization through inverse sparsity.

\begin{enumerate}
\item 
Theorem~\ref{T:LG-variational-characterization} connects the canonical Lorentz-Gram completion with the classical covariance-selection problem of Dempster \cite{Dempster1972}. If $\Sigma\succ0$ is the covariance matrix of a Gaussian vector in $\R^n$, its differential entropy is the following {\cite[pp 161]{Dempster1972}}: 
\[
h(\Sigma)
=
\frac12\log\det\Sigma+\frac n2\log(2\pi e).
\]
Thus, among positive-definite completions with prescribed diagonal and selected off-diagonal entries, maximizing Gaussian entropy is equivalent to maximizing the determinant. Whenever a positive-definite completion exists, the maximizer is unique and is characterized by the vanishing of its inverse at the unspecified positions {\cite[Theorem~2]{Grone-Johnson-Sa-Wolkowicz}}. The first-order identity underlying this characterization is more general:
\[
D\Phi(X)[H]=\operatorname{Tr}(X^{-1}H),
\qquad
\Phi(X)=\log|\det X|.
\]
For nonsingular symmetric completions, it makes stationarity under variations of the unspecified entries equivalent to inverse sparsity, without requiring positive definiteness or chordality. Ferrante and Pavon developed this variational viewpoint for general nonsingular matrix completions \cite{FerrantePavon2011}; see also \cite{FerrantePavon2013}

\item A particularly relevant parallel appears in the work of Helton, Lasserre, and Putinar \cite{Helton-Lasserre-Putinar}. For positive definite truncated moment matrices, they showed that prescribed zeros in the inverse are governed by full and conditional triangularity properties of the associated orthogonal polynomials \cite[Theorems~5.1 and~6.1]{Helton-Lasserre-Putinar}. This parallels the role played here by simplicial elimination and the orthogonal-innovation construction (in Algorithm~\ref{alg:chordal-orthogonal-innovation}) in producing the inverse-sparsity pattern of the canonical completion. They also observed that, when a positive definite completion is constrained to preserve moment-matrix structure, entropy maximization need not force the inverse to vanish at the unspecified positions \cite[p.~1454]{Helton-Lasserre-Putinar}. Thus, their work exhibits both the structural significance of inverse zeros and the possible failure, under additional constraints, of their classical connection with entropy maximization.
\end{enumerate}

A nonsingular Lorentz-Gram matrix of order $n\geq2$ is indefinite, so it is not itself a covariance matrix and the Gaussian-entropy interpretation does not apply directly. Nevertheless, Theorem~\ref{T:LG-variational-characterization} shows that, under the present nonsingular chordal hypotheses, the first-order inverse-sparsity condition still selects the canonical clique-product completion. Moreover, unlike the constrained moment-matrix phenomenon highlighted by Helton, Lasserre, and Putinar \cite[p.~1454]{Helton-Lasserre-Putinar}, the Lorentz-Gram constraint does not destroy the analogous connection with determinant extremality: the canonical completion uniquely maximizes the absolute determinant among all Lorentz-Gram completions. This conclusion follows from negative-definite Schur complements and Fischer's determinant inequality; it is not a claim of global maximization over the larger set $\A(A)\cap GL_n$. The absolute value is essential, since every nonsingular Lorentz-Gram matrix $X$ of order $n$ satisfies $\det X=(-1)^{n-1} |\det X|$.
\end{remark}

The clique-tree construction of Theorem~\ref{T:clique-product-completion} is developed in Subsection~\ref{SS:clique-tree-completion}, while the inverse formula, canonical uniqueness, and orthogonal-innovation realization in Theorem~\ref{T:chordal-canonicity} together with the proof of Theorem~\ref{T:LG-variational-characterization} are established in Subsection~\ref{SS:chordal-canonicity}.

\subsection{Metric distortion of completions}
\label{SS:metric-distortion}

The problem of embedding graph metrics into hyperbolic space with controlled distortion has been studied from several perspectives. Sarkar constructed embeddings of weighted trees into the hyperbolic plane that, after a suitable uniform scaling of the edge lengths, preserve those lengths and have multiplicative distortion arbitrarily close to one~\cite{Sarkar}. More generally, Verbeek and Suri related the distortion of hyperbolic embeddings of graph shortest-path metrics to combinatorial properties of the graph, particularly quasi-cyclicity \cite{Verbeek}.

Our starting point is different: we prescribe only the edge distances and seek a Lorentz-Gram completion of the resulting partial matrix. Such a completion need not yield an embedding; for instance, in the geodesic rectification of Theorem~\ref{T:tree-completion}, distinct vertices may acquire the same signed-path coordinate. By contrast, for finite trees with strictly positive prescribed edge lengths, the product-distance completion of Theorem~\ref{T:tree-path-product-completion} is nonsingular, as is the clique-product completion under the hypotheses of Theorem~\ref{T:clique-product-completion}. Their realizations therefore yield embeddings. We study the distortion of these embeddings relative to the shortest-path metric induced by the given edge weights.

\begin{defn}[Embedding realization, shortest-path metric, distortion]
\label{D:distortion-Lorentz-Gram-realization}
Let $V$ be a nonempty finite set.
\begin{enumerate}
\item 
Let $B=(b_{ij})_{i,j\in V}$ be a nonsingular Lorentz-Gram matrix. 
An embedding realization of $B$ is a map $f:V\to\Lob(H)$, for some real Hilbert space $H$, such that for all $i,j\in V$,
\[
[f(i),f(j)]=b_{ij}.
\]
By Lemma~\ref{lem:sign-inn}, every such map is injective. Moreover, by Lemma~\ref{L:LG-realization-isometry}, any two embedding realizations of $B$ are related by a Lorentz isometry on their spans. In particular, they determine the same pairwise hyperbolic distances.

\item
Let $G=(V,E)$ be a connected graph, and assign to each edge $e\in E$ a nonnegative number $\lambda_e$. The corresponding graph shortest-path metric for all $i,j\in V$ is given by,
\[
d_G(i,j):=\min_{P:i\leadsto j}\sum_{e\in P}\lambda_e,
\]
where the minimum is taken over all paths $P$ in $G$ joining $i$ to $j$, and the empty path has length zero.

\item
Following Verbeek and Suri \cite{Verbeek}, a hyperbolic embedding of $(V,d_G)$ is an injective map $f: V\to\Lob(H)$, where $H$ is a real Hilbert space. Its multiplicative distortion $\dist(f)$ is the least $\mathcal D\geq1$ such that for all $i,j\in V$, we have
\[
d_{\Lob}\bigl(f(i),f(j)\bigr)\leq d_G(i,j) \leq
\mathcal D\,d_{\Lob}\bigl(f(i),f(j)\bigr).
\]
Every embedding realization of a Lorentz-Gram completion preserves the prescribed edge lengths and is therefore nonexpansive with respect to $d_G$, i.e., by the hyperbolic triangle inequality, $d_{\Lob}(f(i),f(j))\leq d_G(i,j)$. Therefore, $\dist(f_A) =\displaystyle\max_{i\neq j} \tfrac{d_G(i,j)}{d_{\Lob}(f_A(i),f_A(j))}$.
\end{enumerate}
\end{defn}

We obtain two complementary conclusions about the embedding behaviour of our completions. For trees, uniformly scaling the prescribed edge lengths yields a family of product-distance embeddings whose multiplicative distortion converges to one. For chordal graphs, the clique-product embeddings exhibit a sharp separator-size dichotomy: singleton clique-tree separators give a sharp and uniform distortion bound, whereas the presence of a separator of size at least two allows the distortion to become arbitrarily large.

\subsubsection{Asymptotically isometric tree embeddings}

We first consider the product-distance completion for trees. Uniformly scaling the prescribed edge lengths makes the resulting hyperbolic embedding asymptotically isometric.

\begin{utheorem}[Limiting tree embeddings]
\label{T:asymptotically-isometric-tree-embeddings}

Let $T=(V,E)$ be a finite tree with $|V|\geq 2$ equipped with positive edge weights $(\lambda_e)_{e\in E}$, and let $d_T$ be its shortest-path metric. Put $\lambda_{\min}:=\min_{e\in E}\lambda_e$. For $\tau>0$, define the scaled partial matrix $A^{(\tau)}$ by $a_{ii}^{(\tau)}:=1$ and $a_e^{(\tau)}:=\cosh(\tau\lambda_e)$ for $e\in E$. Let $B^{(\tau)}$ be the product-distance completion of $A^{(\tau)}$ given by Theorem~\ref{T:tree-path-product-completion}, and let $f_\tau$ be any embedding realization of this fixed completion. Then $f_\tau$ is a metric embedding, and if $\lambda_{\min}\tau>{\log2}$, for every $i,j\in V$, we have
\[
d_{\Lob}\bigl(f_\tau(i),f_\tau(j)\bigr)
\leq
\tau d_T(i,j)
\leq
\left(
1-\frac{\log2}{\tau\lambda_{\min}}
\right)^{-1}
d_{\Lob}\bigl(f_\tau(i),f_\tau(j)\bigr).
\]
Consequently, the distortion $\dist (f_\tau)$ converges to $1$ as $\tau\to\infty$.
\end{utheorem}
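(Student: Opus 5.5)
We will obtain both inequalities directly from Proposition~\ref{P:tree-product-distance-structure}(3), applied to $A^{(\tau)}$. First, since every $\lambda_e>0$ and $\tau>0$, we have $a^{(\tau)}_e=\cosh(\tau\lambda_e)>1$. So by Proposition~\ref{P:tree-product-distance-structure}(2) the completion $B^{(\tau)}$ is nonsingular. Hence any realization $f_\tau$ is injective, via Lemma~\ref{lem:sign-inn} as recorded in Definition~\ref{D:distortion-Lorentz-Gram-realization}, and is therefore a metric embedding. By Lemma~\ref{L:LG-realization-isometry}, the distances $d_{\Lob}(f_\tau(i),f_\tau(j))=\ach b^{(\tau)}_{ij}=:d_\tau(i,j)$ do not depend on the choice of realization.

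For the left inequality, note that on a tree $d_T(i,j)=\sum_{e\in P(i,j)}\lambda_e$, since the unique path is the shortest one. Each edge of the path is realized with length $\ach a^{(\tau)}_e=\tau\lambda_e$. The hyperbolic triangle inequality along the consecutive vertices of $P(i,j)$ then gives $d_\tau(i,j)\le\tau d_T(i,j)$.

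For the right inequality, Proposition~\ref{P:tree-product-distance-structure}(3) gives $d_\tau(i,j)\ge\delta_\tau(i,j)$, where
\[
\delta_\tau(i,j)=\sum_{e\in P(i,j)}\log\cosh(\tau\lambda_e).
\]
We use the elementary bound $\log\cosh x\ge x-\log2$, which follows from $\cosh x\ge e^{x}/2$. This yields
\[
d_\tau(i,j)\ge\sum_{e\in P(i,j)}\bigl(\tau\lambda_e-\log2\bigr)=\tau d_T(i,j)-|P(i,j)|\log2 .
\]
This is the key step. We need to absorb the additive loss $|P(i,j)|\log2$ multiplicatively, and the number of edges can be large, so the loss must be charged edge by edge. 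Since $\lambda_e\ge\lambda_{\min}$, we have $\log2\le\frac{\log2}{\tau\lambda_{\min}}\,\tau\lambda_e$ for each edge. Summing over the edges of $P(i,j)$ gives
\[
|P(i,j)|\log2\le\frac{\log2}{\tau\lambda_{\min}}\,\tau d_T(i,j).
\]
Therefore
\[
d_\tau(i,j)\ge\Bigl(1-\frac{\log2}{\tau\lambda_{\min}}\Bigr)\tau d_T(i,j).
\]
When $\tau\lambda_{\min}>\log2$ the factor is positive, and dividing by it gives the stated upper bound for $\tau d_T(i,j)$.

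Finally, we read off the distortion. Since $\dist(f)$ is scale-invariant in the metric, we compare $f_\tau$ with $(V,\tau d_T)$, where $f_\tau$ is nonexpansive by the left inequality. The displayed bounds then give
\[
1\le\dist(f_\tau)\le\Bigl(1-\frac{\log2}{\tau\lambda_{\min}}\Bigr)^{-1}\to1 .
\]
The main point requiring care is to state that the distortion is measured against the scaled metric $\tau d_T$, since the realized edge lengths are $\tau\lambda_e$. Everything else is a direct consequence of Proposition~\ref{P:tree-product-distance-structure}(3).
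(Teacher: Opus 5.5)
Your proof is correct and follows essentially the same route as the paper. Both arguments take the lower bound $d_{\Lob}(f_\tau(i),f_\tau(j))\ge\sum_{e\in P(i,j)}\log\cosh(\tau\lambda_e)\ge\tau d_T(i,j)-|P(i,j)|\log 2$, absorb the additive loss via $|P(i,j)|\le d_T(i,j)/\lambda_{\min}$, and get the upper bound from nonexpansiveness. Citing Proposition~\ref{P:tree-product-distance-structure}(2)--(3), where the paper instead uses the orthogonal-innovation realization and $\ach x\ge\log x$ directly, is only a cosmetic difference.

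One small wording point: under the paper's normalized definition of distortion, which requires $d_{\Lob}\le d_G$, $\dist$ is not literally scale-invariant. Your actual comparison against $(V,\tau d_T)$ is nonetheless exactly the right one.
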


Thus, the product-distance completion is determined by orthogonal innovation (shown later in Algorithm~\ref{alg:tree-orthogonal-innovation}), and uniform rescaling makes the resulting embeddings asymptotically isometric. The estimate depends only on $\tau\lambda_{\min}$ and is uniform over all finite trees. The construction realizes an $n$-vertex tree in $\Lob(\R^{n-1})$, whereas Sarkar's construction takes values in the hyperbolic plane \cite{Sarkar}.

\subsubsection{Separator-size dichotomy}

We next characterize when the canonical clique-product embeddings of a fixed connected chordal graph have uniformly bounded distortion over all admissible clique data. The answer depends on the sizes of the clique-tree separators.

\begin{utheorem}[Block-graph distortion dichotomy]
\label{T:separator-size-distortion-dichotomy}
Let $G=(V,E)$ be a finite connected chordal graph with $|V|\geq2$, and let $\ct$ be a clique-tree of $G$. Call a partial symmetric matrix $A$ with specification graph $G$ \emph{admissible} if every maximal clique principal submatrix is a nonsingular Lorentz-Gram matrix. For each admissible $A$, let $B_A$ be its clique-product completion from Theorem~\ref{T:clique-product-completion}, and let $f_A$ be any embedding realization of $B_A$. Equip $V$ with the shortest-path metric $d_G$ induced by the edge lengths $\lambda_{ij}:=\ach(a_{ij})$, $\{i,j\}\in E$. Suppose $\diam(G)$ denotes the diameter of $G$ with each edge assigned unit length. Then the following dichotomy holds.
\begin{enumerate}
\item
If every separator of $\ct$ is a singleton, then $\displaystyle\sup_A\dist(f_A)=\diam(G)^{1/2}$.

\item
If not every separator of $\ct$ is a singleton, then $\displaystyle\sup_A\dist(f_A)=\infty$.
\end{enumerate}
Both suprema range over all admissible partial matrices with specification graph $G$. The dichotomy is independent of the choice of $\ct$: the multiset of clique-tree separators, counted with the multiplicity with which each occurs on the edges of $\mathcal T$, is the same for every clique-tree of $G$; see {\cite[Theorem~4.4]{BlairPeyton1993}} or \cite{Ho-Lee,Lundquist}.
\end{utheorem}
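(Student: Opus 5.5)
The plan is to compute the clique-product completion explicitly in the block-graph case and to build explicit degenerating families in the other case. If every separator of $\ct$ is a singleton, then every $A_{S_r\times S_r}=(1)$, and the formula of Theorem~\ref{T:clique-product-completion} becomes scalar: for $i\neq j$ in different maximal cliques with separators $S_r=\{s_r\}$,
\[
b_{ij}=a_{is_1}a_{s_1s_2}\cdots a_{s_mj},
\]
with factors equal to $1$ dropped when $i=s_1$ or $s_m=j$. In a block graph every $i$–$j$ path passes through the cut vertices $s_1,\dots,s_m$. Hence the vertex sequence $i,s_1,\dots,s_m,j$ (after merging repeats) is a path with $L=\dist_{\text{unit}}(i,j)\le\diam(G)$ edges, and $b_{ij}=\prod_{k=1}^L\cosh\lambda_k$ over its edges.

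The key analytic inequality is $\ach\bigl(\prod_k\cosh\lambda_k\bigr)\ge(\sum_k\lambda_k^2)^{1/2}$. It follows because $\varphi(x):=\log\cosh\sqrt{x}$ satisfies $\varphi(0)=0$ and $\varphi'(x)=\tanh(\sqrt x)/(2\sqrt x)$, which is decreasing. So $\varphi$ is concave, hence subadditive, giving $\cosh\bigl(\sqrt{\sum\lambda_k^2}\bigr)\le\prod\cosh\lambda_k$. Cauchy–Schwarz then gives
\[
d_G(i,j)\le\sum_k\lambda_k\le\sqrt L\,\Bigl(\sum_k\lambda_k^2\Bigr)^{1/2}\le\diam(G)^{1/2}\,d_{\Lob}(f_A(i),f_A(j)).
\]
Since $f_A$ is nonexpansive (Definition~\ref{D:distortion-Lorentz-Gram-realization}), this proves $\sup_A\dist(f_A)\le\diam(G)^{1/2}$.

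For sharpness, assign every edge of $G$ the same length $\lambda$. Each clique block is then $(\cosh\lambda-1)I+\ldots$, the Gram matrix of a regular hyperbolic simplex, which is nonsingular Lorentz-Gram, so the data are admissible. Take $i,j$ at unit distance $D=\diam(G)$. Then $d_G(i,j)=D\lambda$ and $d_{\Lob}=\ach\bigl((\cosh\lambda)^D\bigr)=\sqrt D\,\lambda+O(\lambda^3)$. Letting $\lambda\to0$ makes the ratio tend to $\sqrt D$.

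If some separator has size at least two, pick a clique-tree edge $CD$ with $S=C\cap D$, $|S|\ge2$, and vertices $i\in C\setminus D$, $j\in D\setminus C$. By the clique-tree separation property recalled after the definition of clique-trees, every $i$–$j$ path in $G$ meets $S$. Hence
\[
d_G(i,j)\ge\min_{s\in S}\lambda_{is}+\min_{s\in S}\lambda_{sj}.
\]
Also, since $C,D$ are adjacent in $\ct$, the completed entry is $b_{ij}=A_{\{i\}\times S}A_{S\times S}^{-1}A_{S\times\{j\}}$.

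The geometric interpretation is that $b_{ij}=[u_i',u_j']$, where $u_i',u_j'$ are the Lorentz-orthogonal projections onto $\operatorname{span}\{u_s:s\in S\}$ of any realizations of $A_{C\times C}$ and $A_{D\times D}$ sharing the points of $S$. The construction proceeds in three steps.
\begin{enumerate}
\item Place the points of $S$ in a tiny neighbourhood of a geodesic segment $[p,q]$.
\item Place $u_i$ and $u_j$ both near a point $r$ on the extension of that geodesic, at distance about $R$ from $S$.
\item Perturb all points by $\eta$ in fresh orthogonal directions, so that every maximal clique block (realized inside one generic configuration of all of $V$ in $\Lob(\R^{n-1})$) is a nonsingular Lorentz-Gram matrix. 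Read off $A$ on $E$ from this configuration.
\end{enumerate}
Then the components of $u_i,u_j$ orthogonal to $\operatorname{span}(u_S)$ are $O(\eta)$, so $b_{ij}=[u_i,u_j]+O(\eta^2)$, which tends to $1$ as $\eta\to0$. Hence $d_{\Lob}(f_A(i),f_A(j))\to0$, while $d_G(i,j)\gtrsim2R$ stays bounded below. Therefore $\dist(f_A)\to\infty$.

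The main obstacle is this second part. One must check carefully that $A_{S\times S}^{-1}$ stays well conditioned enough under the perturbations: the points of $S$ need to be genuinely spread along $[p,q]$ so that the line through $r$ lies in their span, rather than collapsing to one point. One must also verify that the $O(\eta^2)$ Schur-complement error is uniform. I would handle this by fixing the unperturbed configuration with $u_S$ affinely independent within a fixed low-dimensional totally geodesic subspace containing $r$, and then taking $\eta\to0$ using continuity of the inverse.
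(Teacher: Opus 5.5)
Your proposal is correct and follows the paper's proof essentially step for step. In the singleton case you use the same scalar path-product reduction, the same concavity and subadditivity of $x\mapsto\log\cosh\sqrt{x}$ followed by Cauchy--Schwarz, and the same uniform small-edge-length family for sharpness. In the other case you use the same device: place the two outer vertices near a common point $w\neq u_s$ of the hyperbolic span of the separator, add fresh orthogonal directions for nonsingularity, and bound the graph distance below because every path must cross $S$.

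The only difference is cosmetic. The paper keeps the separator vectors fixed and sets $u_v=\cosh(\varepsilon)w+\sinh(\varepsilon)e_v$ for $v\notin S$. This gives the exact value $b_{xy}=\cosh^2\varepsilon$ and avoids the conditioning and continuity issue you raise at the end.
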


\begin{remark}
A connected chordal graph whose clique-tree separators are all singletons is precisely a \emph{block graph}, that is, a graph each of whose blocks -- the maximal subgraphs without a cut vertex -- is a clique. Equivalently, these are precisely the graphs containing no induced cycle of length at least four and no induced diamond $K_4-e$. Thus Theorem~\ref{T:separator-size-distortion-dichotomy} says that the canonical embeddings have uniformly bounded distortion over all admissible data exactly when $G$ is a block graph; in that case, the sharp uniform bound is $\diam(G)^{1/2}$.
\end{remark}

\section{Preliminaries}

We begin by recording some elementary facts about finite-dimensional subspaces of Lorentz spaces and their Gram matrices. We use the following conventions throughout.

\begin{defn} We define signature, inertia, and orthogonal complement.

\begin{enumerate}
\item 
For a finite-dimensional real vector space $W$ equipped with a nondegenerate symmetric bilinear form, its signature is the pair $(p,q)$, where $p$ and $q$ are respectively the maximal dimensions of subspaces on which the form is positive definite and negative definite. Thus $p+q=\dim W$.

\item 
For a real symmetric matrix $M$, its inertia is the triple $\inn(M):=(n_+(M),n_-(M),n_0(M))$, where $n_+(M)$, $n_-(M)$, and $n_0(M)$ denote respectively the numbers of positive, negative, and zero eigenvalues of $M$, counted with multiplicity. By Sylvester's law of inertia, these numbers are invariant under congruence.

\item Recall Definition~\ref{defn:lob-space} for the hyperboloid model of Lobachevsky space and the Lorentz form $[\cdot,\cdot]$. For a subspace $W\subseteq \R\oplus H$, we write $W^\perp$ to denote all $x\in\R\oplus H$ such that $[x,w]=0$ for every $w\in W$. In particular, for $u\in\Lob(H)$, we write $u^\perp$ for $(\R u)^\perp$.
\end{enumerate}
\end{defn}

Let $H$ be a real Hilbert space and suppose $u\in\Lob(H)$. Since $[u,u]=1$, for every $x\in\R\oplus H$, we have $[x-[x,u]u,u]=[x,u]-[x,u][u,u]=0$. Thus $x-[x,u]u\in u^\perp$. The restriction of the Lorentz form to $u^\perp$ is negative definite, as shown next.

\begin{lemma}\label{L:neg-def}
For all $u\in\Lob(H)$, the restriction of the Lorentz form to $u^\perp$ is negative definite.
\end{lemma}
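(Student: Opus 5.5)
The approach is to compute the form directly on $u^\perp$ in coordinates and reduce negativity to the Cauchy--Schwarz inequality in $H$.

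Write $u=(u_0,\mathbf u_1)$, so that $u_0>0$ and $u_0^2=1+\|\mathbf u_1\|^2$. Take any $x=(x_0,\mathbf x_1)\in u^\perp$. The condition $[x,u]=0$ reads $x_0u_0=\langle \mathbf x_1,\mathbf u_1\rangle_H$, which gives $x_0=\langle \mathbf x_1,\mathbf u_1\rangle_H/u_0$. Substituting this into the quadratic form yields
\[
[x,x]=\frac{\langle \mathbf x_1,\mathbf u_1\rangle_H^2}{u_0^2}-\|\mathbf x_1\|^2
\leq \frac{\|\mathbf u_1\|^2}{1+\|\mathbf u_1\|^2}\,\|\mathbf x_1\|^2-\|\mathbf x_1\|^2
=-\frac{\|\mathbf x_1\|^2}{1+\|\mathbf u_1\|^2},
\]
where the inequality is Cauchy--Schwarz. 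Hence $[x,x]\leq0$ on $u^\perp$.

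For definiteness, suppose $[x,x]=0$ with $x\in u^\perp$. The displayed bound then forces $\mathbf x_1=0$, and the relation $x_0=\langle \mathbf x_1,\mathbf u_1\rangle_H/u_0$ gives $x_0=0$, so $x=0$. The bound $[x,x]\leq -\|\mathbf x_1\|^2/u_0^2$ is in fact uniform, which is useful later if one wants coercivity in the infinite-dimensional setting.

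The argument has no real obstacle. The only points needing care are that the argument uses $u_0\neq0$ (indeed $u_0>0$ by the definition of $\Lob(H)$) and that the Hilbert space $H$ may be infinite-dimensional, which the Cauchy--Schwarz step handles without change. As an alternative, one could note that the form has positive index one and is positive on $\R u$; any vector $x\in u^\perp$ with $[x,x]\ge0$ would then span together with $u$ a two-dimensional subspace on which the form is not negative definite, contradicting index one. The coordinate computation above is more self-contained, so I would use it.
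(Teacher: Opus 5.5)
Your coordinate argument is correct and is the same as the paper's proof: both solve $[x,u]=0$ for $x_0=\langle \mathbf x_1,\mathbf u_1\rangle_H/u_0$, apply Cauchy--Schwarz to obtain $[x,x]\leq -\|\mathbf x_1\|^2/u_0^2$, and then observe that $\mathbf x_1=0$ forces $x_0=0$. (Your alternative via positive index one works as sketched only when $[x,x]>0$, since then $u$ and $x$ would span a positive definite plane; a nonzero null $x\in u^\perp$ gives only a positive semidefinite plane, which index one alone does not rule out unless you also use nondegeneracy of the ambient form. Since you rely on the coordinate argument, this does not affect your proof.)
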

\begin{proof}
Let $0\neq y=(y_0,{\bf y}_1)\in u^\perp$, and write $u=(u_0,{\bf u}_1)$. Therefore, we have $0=[y,u]=y_0u_0-\langle {\bf y}_1,{\bf u}_1\rangle$, and so $y_0={\langle {\bf y}_1,{\bf u}_1\rangle}/{u_0}$. Using this and the Cauchy--Schwarz inequality, we obtain

\begin{align*}
[y,y]
=y_0^2-\|{\bf y}_1\|^2&=\frac{\langle {\bf y}_1,{\bf u}_1\rangle^2}{u_0^2}-\|{\bf y}_1\|^2 \\
\leq
\frac{\|{\bf y}_1\|^2\|{\bf u}_1\|^2}{u_0^2}
-\|{\bf y}_1\|^2 &= \bigg{(}\frac{\|{\bf u}_1\|^2}{u_0^2}
-1\bigg{)}\|{\bf y}_1\|^2 = -\frac{\|{\bf y}_1\|^2}{u_0^2}.
\end{align*}
If ${\bf y}_1=0$ then $y_0={\langle {\bf y}_1,{\bf u}_1\rangle}/{u_0}=0$. Thus $y\neq0$ implies ${\bf y}_1\neq0$, and hence $[y,y]<0$.
\end{proof}

\begin{remark}
In the two-dimensional case where $H=\R$, the Cauchy--Schwarz inequality is an equality, and the computation reduces to
\[
[x-[x,u]u,\;x-[x,u]u]
=
-(u_0{x}_1-{u}_1x_0)^2.
\]   
\end{remark}

The preceding lemma immediately determines the signature of every finite-dimensional subspace spanned by points of Lobachevsky space, and hence the inertia of the corresponding Lorentz-Gram matrix.

\begin{lemma}\label{lem:sign-inn}
Let $H$ be a real Hilbert space, and suppose $W$ is the real span of distinct $u_1,\dots,u_k\in\Lob(H)$. If $r:=\dim W$ then $W$ is nondegenerate of signature $(1,r-1)$, and the Lorentz-Gram matrix $\bigl([u_i,u_j]\bigr)_{i,j=1}^{k}$ has inertia $(1,r-1,k-r)$. In particular, the Lorentz-Gram matrix is nonsingular if and only if $u_1,\dots,u_k$ are linearly independent.
\end{lemma}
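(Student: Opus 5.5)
The plan is to anchor at $u_1$ and split $W$ orthogonally, $W=\R u_1\oplus W_1$ with $W_1:=W\cap u_1^\perp$. This is possible because $[u_1,u_1]=1\neq0$: every $x\in W$ decomposes as $x=[x,u_1]u_1+(x-[x,u_1]u_1)$, and the second summand lies in $W\cap u_1^\perp$ by the computation preceding Lemma~\ref{L:neg-def}. So $\dim W_1=r-1$. By Lemma~\ref{L:neg-def}, the Lorentz form is negative definite on $W_1\subseteq u_1^\perp$. The form restricted to $W$ is therefore the orthogonal sum of the positive form on $\R u_1$ and a negative definite form on $W_1$. Hence it is nondegenerate of signature $(1,r-1)$. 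Distinctness is not actually needed for this step; it only guarantees $k$ labelled points, which is harmless.

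For the inertia I would choose a basis $w_1=u_1,w_2,\dots,w_r$ of $W$ adapted to this splitting, with $w_2,\dots,w_r$ a $[\cdot,\cdot]$-orthonormal basis of $W_1$ (that is, $[w_a,w_b]=-\delta_{ab}$). Write $u_i=\sum_a c_{ia}w_a$ and let $C$ be the $k\times r$ coefficient matrix. Then the Lorentz-Gram matrix is
\[
M=\bigl([u_i,u_j]\bigr)=C\,J\,C^t,\qquad J=\diag(1,-1,\dots,-1).
\]
Since the $u_i$ span $W$, $C$ has rank $r$. Extend $C$ to an invertible $k\times k$ matrix $\widetilde C$ by appending $k-r$ columns. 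Then $M=\widetilde C\,(J\oplus 0_{k-r})\,\widetilde C^t$, and Sylvester's law of inertia gives $\inn(M)=(1,r-1,k-r)$.

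The final assertion follows immediately: $M$ is nonsingular iff $k-r=0$, i.e. iff $\dim W=k$, i.e. iff $u_1,\dots,u_k$ are linearly independent.

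The only point needing care is that nondegeneracy of $W$ is not automatic in an indefinite space. It is secured here by the explicit orthogonal splitting together with the negative definiteness from Lemma~\ref{L:neg-def}. That lemma needs only $u_1\in\Lob(H)$, so I expect no genuine obstacle.
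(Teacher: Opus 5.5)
Your proposal is correct and follows the paper's proof. The paper uses the same orthogonal splitting $W=\R u_1\oplus(W\cap u_1^\perp)$ together with Lemma~\ref{L:neg-def} to obtain the signature $(1,r-1)$, and then applies Sylvester's law of inertia to get $\inn(M)=(1,r-1,k-r)$; your factorization $M=\widetilde C\,(J\oplus 0_{k-r})\,\widetilde C^t$ just writes out explicitly the congruence step that the paper compresses into ``choosing a basis of $W$ from among these vectors and applying Sylvester's law.''
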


\begin{proof}
Since $u_1\in W$ and $[u_1,u_1]=1$, every $w\in W$ can be written uniquely as $w=[w,u_1]u_1+\bigl(w-[w,u_1]u_1\bigr)$, with $w-[w,u_1]u_1\in W\cap u_1^\perp$. Hence $W=\R u_1\oplus (W\cap u_1^\perp)$ as an orthogonal direct sum. The Lorentz form is positive definite on the one-dimensional space $\R u_1$, while by Lemma~\ref{L:neg-def} it is negative definite on $W\cap u_1^\perp$. Since $\dim(W\cap u_1^\perp)=r-1$, the restricted form on $W$ is nondegenerate and has signature $(1,r-1)$.

The family $u_1,\ldots,u_k$ spans $W$. Choosing a basis of $W$ from among these vectors and applying Sylvester's law of inertia shows that its Lorentz-Gram matrix has one positive eigenvalue, $r-1$ negative eigenvalues, and $k-r$ zero eigenvalues. Thus $\inn(G)=(1,r-1,k-r)$. In particular, $G$ is nonsingular precisely when $k=r$, which is equivalent to the linear independence of $u_1,\ldots,u_k$.
\end{proof}

The preceding lemma characterizes nonsingularity in terms of linear independence. We next record that two realizations of the same nonsingular Lorentz-Gram matrix are isometric on their spans. This will allow us to compare prescribed geometric configurations with the vectors produced by orthogonal innovation.

\begin{lemma}[Isometry of nonsingular Lorentz-Gram realizations]
\label{L:LG-realization-isometry}
Let $H,\widetilde H$ be a real Hilbert spaces, and let $I$ be a nonempty finite set. Suppose $(u_i)_{i\in I}\subseteq\Lob(H)$ and $(\widetilde u_i)_{i\in I}\subseteq\Lob(\widetilde H)$ have the same nonsingular Lorentz-Gram matrix:
\[
[u_i,u_j]=[\widetilde u_i,\widetilde u_j],
\qquad i,j\in I.
\]
Then there is a unique linear isometry $T:\Span\{u_i:i\in I\}\to\Span\{\widetilde u_i:i\in I\}$ such that $Tu_i=\widetilde u_i$ for every $i\in I$. Moreover, $T$ maps the positive unit hyperboloid in the first span onto that in the second and preserves hyperbolic distances.
\end{lemma}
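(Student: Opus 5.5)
The plan is to build $T$ by prescribing it on a basis and extending linearly; the Gram condition is then exactly what makes it an isometry. Write $W:=\Span\{u_i:i\in I\}$ and $\widetilde W:=\Span\{\widetilde u_i:i\in I\}$. The common Lorentz-Gram matrix $K=([u_i,u_j])_{i,j\in I}$ is nonsingular, so Lemma~\ref{lem:sign-inn} shows that both families are linearly independent. Hence $(u_i)_{i\in I}$ is a basis of $W$ and $(\widetilde u_i)_{i\in I}$ is a basis of $\widetilde W$, and both spaces have dimension $k:=|I|$.

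\textbf{Existence and uniqueness.} Define $T$ on the basis by $Tu_i:=\widetilde u_i$ and extend linearly. This is the only linear map with $Tu_i=\widetilde u_i$, which gives uniqueness. It is bijective, since it carries a basis onto a basis. For $x=\sum_i x_iu_i$ and $y=\sum_j y_ju_j$, bilinearity gives
\[
[Tx,Ty]=\sum_{i,j}x_iy_j[\widetilde u_i,\widetilde u_j]=\sum_{i,j}x_iy_j[u_i,u_j]=[x,y].
\]
So $T$ is a linear isometry from $W$ onto $\widetilde W$.

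\textbf{The hyperboloid.} Set $\Lob_W:=W\cap\Lob(H)$, and define $\Lob_{\widetilde W}$ likewise. An isometry preserves $[x,x]=1$, so it maps the two-sheeted set $\{x\in W:[x,x]=1\}$ onto its counterpart in $\widetilde W$. The only delicate step is to show that $T$ preserves the sheets, i.e.\ the condition $x_0>0$.

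The approach is to express the sheet intrinsically. I will first show that for $x,y$ with $[x,x]=[y,y]=1$, one has $[x,y]\ge1$ when $x_0,y_0$ have the same sign, and $[x,y]\le-1$ otherwise. To see this, write $x=(x_0,\mathbf x_1)$ and $y=(y_0,\mathbf y_1)$. Then $|x_0|=\sqrt{1+\|\mathbf x_1\|^2}$ and $|y_0|=\sqrt{1+\|\mathbf y_1\|^2}$, and Cauchy--Schwarz gives
\[
|x_0y_0|\ge\sqrt{(1+\|\mathbf x_1\|^2)(1+\|\mathbf y_1\|^2)}\ge1+\|\mathbf x_1\|\|\mathbf y_1\|\ge1+|\langle\mathbf x_1,\mathbf y_1\rangle|.
\]
Consequently $[x,y]=x_0y_0-\langle\mathbf x_1,\mathbf y_1\rangle$ has the sign of $x_0y_0$ and absolute value at least $1$.

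It follows that $\Lob_W$ is exactly the set of $x\in W$ with $[x,x]=1$ and $[x,u_{i_1}]>0$, where $i_1\in I$ is a fixed index. The same description holds for $\Lob_{\widetilde W}$ with $\widetilde u_{i_1}$ in place of $u_{i_1}$, because $u_{i_1}$ and $\widetilde u_{i_1}$ lie on the positive sheets. Since $T$ preserves the form and $Tu_{i_1}=\widetilde u_{i_1}$, it maps $\Lob_W$ into $\Lob_{\widetilde W}$. Applying the same argument to $T^{-1}$ gives surjectivity.

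\textbf{Distances.} Distance preservation is then immediate: $d_{\Lob}(Tx,Ty)=\ach[Tx,Ty]=\ach[x,y]=d_{\Lob}(x,y)$.
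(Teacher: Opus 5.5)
Your proof is correct and follows the paper's own argument. Both define $T$ by linear extension on the basis $(u_i)$, obtain the isometry from equality of the Gram matrices, and get sheet preservation from $[Tz,\widetilde u_{i_0}]=[z,u_{i_0}]$, applied to both $T$ and $T^{-1}$. Your only addition is an explicit proof of the sheet criterion ($[x,y]\geq1$ for unit timelike vectors on the same sheet, $[x,y]\leq-1$ otherwise), which the paper uses without comment.
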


\begin{proof}
Nonsingularity of the common Lorentz-Gram matrix implies that both families are linearly independent. Thus $T\left(\sum_{i\in I}a_i u_i\right):=\sum_{i\in I}a_i\widetilde u_i$ defines a unique linear bijection between their spans. Equality of the Lorentz-Gram matrices gives
\[
\Big[
T\Big(\sum_i a_i u_i\Big),
T\Big(\sum_j b_j u_j\Big)
\Big]
=
\sum_{i,j}a_i b_j[u_i,u_j]
=
\Big[\sum_i a_i u_i,\sum_j b_j u_j\Big].
\]
Hence $T$ is a Lorentz isometry. Fix $i_0\in I$. If $z$ belongs to the positive unit hyperboloid in the first span, then $[Tz,Tz]=[z,z]=1$ and $[Tz,\widetilde u_{i_0}]=[z,u_{i_0}]\geq1$. Thus $Tz$ lies on the positive sheet. Applying the same argument to $T^{-1}$ proves the asserted correspondence. Preservation of hyperbolic distances follows from $d_{\Lob }(z,z')=\ach[z,z']$.
\end{proof}

Lemma~\ref{lem:sign-inn} gives the finite-dimensional inertia condition satisfied by Lorentz-Gram matrices. A classical theorem of Krein shows that, at the level of arbitrary metric spaces, this condition is also sufficient for realization in Lobachevsky space. Since this result is the historical precursor of the kernel criterion developed in the next section, we record it here.

\begin{defn}[Lobachevsky space with parameter $R$]\label{defn:lob-space-of-radius}
Following Iohvidov and Krein~\cite{IK-1,IK-2}, for $R>0$, let $\Lob_R(H)$ denote the set $\Lob(H)$ (Definition~\ref{defn:lob-space}) equipped with the rescaled hyperbolic distance $d_{\Lob_R}(x,y):= R~\ach \left([x,y]\right)$, for all $x,y\in\Lob_R(H)$, or equivalently, $[x,y]=\cosh \left(\frac{1}{R}d_{\Lob_R}(x,y)\right)$. Thus $\Lob_1(H)$ coincides with $\Lob(H)$, with the normalization used throughout this paper.
\end{defn}

\begin{theorem}[Krein's Lobachevsky embedding theorem \cite{Krein1948}; see also {\cite[Theorem~6.1]{IK-2}}]\label{Krein-embedding} Let $R>0$, and let $(Q,\rho)$ be a set $Q$ equipped with a map $\rho:Q\times Q\to\R_+$ such that $\rho(p,q)=\rho(q,p)$ and $\rho(p,p)=0$ for all $p,q\in Q$. Then there exist a real Hilbert space $H$ and a map $p\longmapsto x_p$ from $Q$ to $\Lob_R(H)$ such that, 
\[
d_{\Lob_R}(x_p,x_q)=\rho(p,q) \quad \mbox{for all}\quad p,q\in Q,
\]
if and only if the real symmetric matrix $\left[\cosh\!\left(\frac{1}{R}\rho(q_i,q_j)\right)\right]_{i,j=1}^n$ has exactly one positive eigenvalue for every $q_1,\ldots,q_n\in Q$ and all integers $n\geq 1$.
\end{theorem}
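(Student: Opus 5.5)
The plan is to reduce Krein's theorem to the finite-dimensional inertia facts already recorded, together with a direct-limit construction of the Hilbert space. After rescaling, set $k_{pq}:=\cosh\bigl(\rho(p,q)/R\bigr)$. Since $\rho\geq0$ and $\rho(p,p)=0$, we have $k_{pq}\geq1=k_{pp}$. Realizing $\rho$ in $\Lob_R(H)$ is then the same as finding $x_p\in\Lob(H)$ with $[x_p,x_q]=k_{pq}$.

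\emph{Necessity.} Suppose the $x_p$ exist. Take $q_1,\ldots,q_n$, and note that some of these points may coincide. Lemma~\ref{lem:sign-inn} is stated for distinct points, so we apply it to the distinct ones among them. The full Gram matrix is obtained from that one by repeating rows and columns, which is a congruence of the form $M=E^tGE$; hence it still has exactly one positive eigenvalue. Alternatively, one can argue directly that the Gram matrix of vectors spanning a subspace of signature $(1,r-1)$ has exactly one positive eigenvalue.

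\emph{Sufficiency.} Fix an anchor $p_0\in Q$ and define $P(p,q):=k_{pp_0}k_{qp_0}-k_{pq}$. The key step is to show that $P$ is positive semidefinite from the one-positive-eigenvalue hypothesis. Take a finite set $F\ni p_0$ and write the matrix $K_F$ in block form with respect to $p_0$. The Schur complement of the $(p_0,p_0)$ entry, which equals $1$, is exactly $-P_F$ restricted to $F\setminus\{p_0\}$. By Haynsworth inertia additivity,
\[
\inn(K_F)=(1,0,0)+\inn(-P_F).
\]
Since $K_F$ has exactly one positive eigenvalue, $-P_F$ has none, so $P_F\succeq0$.

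Next, Kolmogorov/Moore's construction gives a real Hilbert space $H$ and vectors $y_p\in H$ with $\langle y_p,y_q\rangle=P(p,q)$; note that $y_{p_0}=0$ because $P(p_0,\cdot)=0$. Set
\[
x_p:=\bigl(k_{pp_0},\,y_p\bigr)\in\R\oplus H.
\]
Then $[x_p,x_q]=k_{pp_0}k_{qp_0}-P(p,q)=k_{pq}$. Also $[x_p,x_p]=1$, and the first coordinate is $k_{pp_0}\geq1>0$, so $x_p\in\Lob(H)$. This gives $d_{\Lob_R}(x_p,x_q)=R\,\ach(k_{pq})=\rho(p,q)$, using $\rho\geq0$.

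The main obstacle is the Schur-complement/inertia step when $Q$ is infinite and the matrices may be singular. This is handled by working only with finite $F$ containing $p_0$, which is allowed because positive semidefiniteness of a kernel is defined finitewise. The Haynsworth formula needs only that the pivot entry $k_{p_0p_0}=1$ be nonzero, so singular $K_F$ causes no difficulty. For the Hilbert space construction with arbitrary index set, we take the completion of the quotient of finitely supported functions on $Q$ under the semi-inner product induced by $P$. This is standard and requires no finiteness assumption.
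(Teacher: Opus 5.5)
Your proof is correct, and for sufficiency it takes a different route from the paper's. The paper sketches an indefinite GNS argument. It puts the form $\sum_{j,k}\xi_j\eta_k\cosh\bigl(\rho(p_j,q_k)/R\bigr)$ on the formal span of symbols $x_p$ and notes that its positive index is exactly one. It then quotients by the radical, appeals to the Pontryagin-space construction to identify the result with $\R\oplus H$, and flips all signs if necessary to land on the positive sheet.

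You instead isolate the single positive direction with an anchor. The congruence $K_F\sim(1)\oplus\bigl(-P_{F\setminus\{p_0\}}\bigr)$ turns ``at most one positive eigenvalue'' into positive semidefiniteness of the anchored kernel. After that, only an ordinary Hilbert-space Gram representation is needed, and the sheet condition is automatic because the time coordinate is $k_{pp_0}\geq1$.

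Your second half is exactly the paper's anchor reduction (Theorem~\ref{T:LG-anchor-char}). The real difference is that the paper obtains ``one positive square $\Rightarrow P_{i_0}\succeq0$'' in Theorem~\ref{T:LG-int-char} by invoking Krein's theorem itself, whereas your Haynsworth step proves it directly. Your version is therefore more elementary, and it makes the kernel criterion of Theorem~\ref{T:LG-ker-char} independent of the Iohvidov--Krein theory. The paper's route, on the other hand, is anchor-free and sits inside the general theory of kernels with finitely many positive squares. Your pivot trick is tailored to the normalized index-one case, where $k_{p_0p_0}=1>0$ is what isolates the positive direction.

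One small point on necessity. $M=E^tGE$ with rectangular $E$ is not a congruence in Sylvester's sense, but $n_+$ is still preserved because $E$ is surjective. Also, apply Lemma~\ref{lem:sign-inn} to the distinct vectors $x_{q_i}$ rather than to the distinct indices $q_i$, since $\rho$ may vanish off the diagonal. This is harmless, because distinctness is not actually used in the proof of that lemma.
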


The necessity follows from the same inertia principle as Lemma~\ref{lem:sign-inn}. The converse may be proved by an indefinite analogue of the Gelfand--Naimark--Segal (GNS) construction. One introduces formal symbols $x_p$, $p\in Q$, and defines on their finite linear span the symmetric bilinear form
\[
\Big[
\sum_j \xi_j x_{p_j},
\sum_k \eta_k x_{q_k}
\Big]
:=\sum_{j,k}
\xi_j\eta_k
\cosh\!\Big(\frac{1}{R}\rho(p_j,q_k)\Big).
\]
The hypothesis implies that this form has at most one positive square on every finite-dimensional subspace; since $[x_p,x_p]=1$, its positive index is exactly one. After quotienting by the radical, the standard Pontryagin-space construction yields a space of positive index one, which may be represented as $\R \oplus H$ with its Lorentz form. The images of the symbols, still denoted by $x_p$, satisfy $[x_p,x_q]=\cosh\!\left(\frac{1}{R}\rho(p,q)\right)$ and $[x_p,x_p]=1$. Moreover, since $[x_p,x_q]=\cosh\!\left(\frac{1}{R}\rho(p,q)\right)\geq 1$ for all $p,q\in Q$, the vectors $x_p$ lie on the same sheet of the unit hyperboloid. After changing all signs simultaneously if necessary, they lie in $\Lob_R(H)$. By Definition~\ref{defn:lob-space-of-radius},
\[
d_{\Lob_R}(x_p,x_q)
=
R \ach \left([x_p,x_q]\right)
=
\rho(p,q),
\]
which gives the desired realization. Krein announced this theorem in~\cite{Krein1948}; a complete proof appears in the work of Iohvidov and Krein~\cite[Theorem~6.1]{IK-2}. The result was rediscovered in the recent hyperbolic distance matrix literature by Tabaghi and Dokmani\'c~\cite{TabaghiDokmanic2020}.

For the purposes of the present paper, we will use the normalized case $R=1$, for which $\Lob_1(H)=\Lob(H)$. We also seek a formulation directly in terms of normalized Lorentz-Gram kernels, in which the unique positive direction is isolated by fixing an anchor. This reduces the indefinite realization problem to an ordinary positive semidefinite one and leads to the kernel criteria developed in the next section.

\section{Lorentzian decompositions and kernel criteria}

We now prove Theorem~\ref{T:LG-ker-char}. The proof separates naturally into the anchored positive semidefinite criterion and its intrinsic reformulation in terms of one positive square.

\subsection{Anchor reduction}

Fixing one vector in a Lorentz-Gram realization separates the unique positive direction from its negative definite orthogonal complement. The resulting orthogonal decomposition gives the anchored positive semidefinite criterion.

\begin{theorem}[Lorentz-Gram anchor reduction]\label{T:LG-anchor-char}
Let $I$ be nonempty, and let $K=(k_{ij})_{i,j\in I}$ be a real symmetric kernel satisfying $k_{ij}\geq k_{ii}=1$ for all $i,j\in I$. Fix $i_0\in I$, and define the kernel $P_{i_0}=\bigl(p^{(i_0)}_{ij}\bigr)_{i,j\in I}$ by $p^{(i_0)}_{ij}:=k_{ii_0}k_{ji_0}-k_{ij}$. Then the following are equivalent:
\begin{enumerate}
\item There exist a real Hilbert space $H$ and a family $(u_i)_{i\in I}\subseteq \Lob(H)$ such that $k_{ij}=\ip{u_i}{u_j}$, for all $i,j\in I$.

\item The anchored kernel $P_{i_0}$ is positive semidefinite.
\end{enumerate}
Whenever these conditions hold, $p^{(i_0)}_{i_0,j}=0$ for every $j\in I$. Moreover, the minimal dimension of $H$ in~{\rm(1)} is the supremum of $\rank \bigl(p^{(i_0)}_{ij}\bigr)_{i,j\in F}$ where $F\subseteq I$ runs over all finite sets.
\end{theorem}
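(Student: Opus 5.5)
The argument runs through the orthogonal splitting at the anchor $u_{i_0}$ in both directions.

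\emph{(1) $\Rightarrow$ (2).} Given $(u_i)\subseteq\Lob(H)$ with $k_{ij}=[u_i,u_j]$, set $w_i:=u_i-k_{ii_0}u_{i_0}$. Then $w_i\in u_{i_0}^\perp$, and expanding the form gives
\[
[w_i,w_j]=k_{ij}-k_{ii_0}k_{ji_0}=-p^{(i_0)}_{ij}.
\]
By Lemma~\ref{L:neg-def}, $-[\cdot,\cdot]$ is positive semidefinite on $u_{i_0}^\perp$. Hence every finite section of $P_{i_0}$ is a Gram matrix of the genuine inner product $-[\cdot,\cdot]$ on $u_{i_0}^\perp$, and is therefore positive semidefinite. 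Since $w_{i_0}=0$, we get $p^{(i_0)}_{i_0j}=0$; this also follows directly from $k_{i_0i_0}=1$.

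\emph{(2) $\Rightarrow$ (1).} Apply Kolmogorov/GNS factorization to the positive semidefinite kernel $P_{i_0}$. This gives a real Hilbert space $H$, which we may take to be the closed span of the vectors, and vectors $\mathbf x_i\in H$ with $\langle \mathbf x_i,\mathbf x_j\rangle=p^{(i_0)}_{ij}$. Put
\[
u_i:=(k_{ii_0},\mathbf x_i)\in\R\oplus H .
\]
Then $[u_i,u_j]=k_{ii_0}k_{ji_0}-p^{(i_0)}_{ij}=k_{ij}$. In particular $[u_i,u_i]=k_{ii}=1$, and the first coordinate is $k_{ii_0}\ge1>0$, so each $u_i$ lies in $\Lob(H)$. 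The vanishing $p^{(i_0)}_{i_0i_0}=0$ forces $\mathbf x_{i_0}=0$, so the anchor goes to $(1,0)$. This construction is the whole content of the hypothesis $k_{ij}\ge1$: it is used only to place the points on the positive sheet.

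\emph{Dimension.} The construction above yields $\dim H=\dim\overline{\Span}\{\mathbf x_i\}$. This equals the supremum over finite $F$ of $\rank(p^{(i_0)}_{ij})_{i,j\in F}$, since the rank of a finite Gram matrix is the dimension of the span of its vectors. So this value is attained.

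For minimality, take any realization $(u_i)\subseteq\Lob(H')$ and form the vectors $w_i$ as in the first step. They lie in $u_{i_0}^\perp$, and on a finite-dimensional subspace of $u_{i_0}^\perp$ the form $-[\cdot,\cdot]$ is positive definite by Lemma~\ref{L:neg-def}. Hence $\rank(p^{(i_0)})_F=\dim\Span\{w_i:i\in F\}$. This subspace embeds in $u_{i_0}^\perp$, which has dimension equal to $\dim H'$; this holds because $\R\oplus H'=\R u_{i_0}\oplus u_{i_0}^\perp$, as in the proof of Lemma~\ref{lem:sign-inn}. Therefore $\dim H'\ge\rank(p^{(i_0)})_F$ for every finite $F$.

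\emph{Main obstacle.} The delicate point is the infinite-dimensional bookkeeping. One must check that $\dim u_{i_0}^\perp=\dim H'$ as Hilbert dimensions, which is immediate when the supremum is finite and a cardinality comparison otherwise. One must also interpret ``minimal dimension'' consistently, either as the supremum of finite ranks or as Hilbert dimension; I would read it as Hilbert dimension of the closed span.
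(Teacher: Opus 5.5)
Your proof is correct and follows essentially the same route as the paper. The paper also uses the projection $u_i\mapsto u_i-k_{ii_0}u_{i_0}$ into the negative definite complement $u_{i_0}^\perp$ for (1)$\Rightarrow$(2), the Gram factorization $u_i=(k_{ii_0},\mathbf x_i)$ for the converse, and the same two-sided rank comparison for the dimension claim.

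The infinite-dimensional caveat you flag is glossed over in the paper too. It is harmless if dimensions are read in $\{0,1,2,\ldots\}\cup\{\infty\}$. If Hilbert dimensions are read as cardinals, however, your asserted equality $\dim\overline{\Span}\{\mathbf x_i\}=\sup_F\rank\bigl(p^{(i_0)}_{ij}\bigr)_{i,j\in F}$ fails for uncountable $I$: an uncountable orthonormal family $\mathbf x_i$ is one example.
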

\begin{proof}
$(1)\implies(2)$. Suppose $(1)$ holds and for each $i\in I$, define
\[
v_i
:=
u_i-\ip{u_i}{u_{i_0}}u_{i_0}
=
u_i-k_{ii_0}u_{i_0}.
\]
Since $\ip{u_{i_0}}{u_{i_0}}=1$, we have $\ip{v_i}{u_{i_0}}=\ip{u_i}{u_{i_0}}-k_{ii_0}\ip{u_{i_0}}{u_{i_0}}=0$. Thus $v_i\in u_{i_0}^{\perp}$ for every $i\in I$. By Lemma~\ref{L:neg-def}, the Lorentz form is negative definite on $u_{i_0}^{\perp}$, so the kernel $\bigl(-\ip{v_i}{v_j}\bigr)_{i,j\in I}$ is positive semidefinite. A direct calculation gives
\begin{align*}
\ip{v_i}{v_j}
&=
\ip{u_i-k_{ii_0}u_{i_0}}
   {u_j-k_{ji_0}u_{i_0}}\\
&=
\ip{u_i}{u_j}
-k_{ji_0}\ip{u_i}{u_{i_0}}
-k_{ii_0}\ip{u_{i_0}}{u_j}
+k_{ii_0}k_{ji_0}\ip{u_{i_0}}{u_{i_0}}\\
&=
k_{ij}-k_{ii_0}k_{ji_0}.
\end{align*}
Hence $p^{(i_0)}_{ij}=k_{ii_0}k_{ji_0}-k_{ij}=-\ip{v_i}{v_j}$. Thus, for every finitely supported real family $(c_i)_{i\in I}$,
\begin{align*}
\sum_{i,j\in I}c_ic_j~p^{(i_0)}_{ij}
=
-\sum_{i,j\in I}c_ic_j[v_i,v_j]
=
-\left[\sum_{i\in I}c_iv_i,\sum_{i\in I}c_iv_i\right]\geq0,
\end{align*}
because $\sum_i c_iv_i\in u_0^\perp$. Thus $P_{i_0}$ is positive semidefinite.

$(2)\implies(1)$.
Suppose that $P_{i_0}$ is positive semidefinite. By the Gram representation of positive semidefinite kernels, there exist a real Hilbert space $H$ and vectors ${\bf x}_i\in H$, $i\in I$, such that
\[
\langle {\bf x}_i,{\bf x}_j\rangle_H
=
p^{(i_0)}_{ij}
=
k_{ii_0}k_{ji_0}-k_{ij}.
\]
Since $\|{\bf x}_{i_0}\|^2=p^{(i_0)}_{i_0i_0}=k_{i_0i_0}^2-k_{i_0i_0}=0$, we have ${\bf x}_{i_0}=0$. Define $u_i:=(k_{ii_0},{\bf x}_i)\in\R\oplus H$ for all $i\in I$. Then
\begin{align*}
\ip{u_i}{u_j}
&=
k_{ii_0}k_{ji_0}-\langle {\bf x}_i,{\bf x}_j\rangle_H\\
&=
k_{ii_0}k_{ji_0}
-\bigl(k_{ii_0}k_{ji_0}-k_{ij}\bigr)\\
&=
k_{ij}.
\end{align*}
In particular, $\ip{u_i}{u_i}=k_{ii}=1$. Moreover, the zeroth coordinate of $u_i$ is $k_{ii_0}\geq1$. Hence $u_i\in\Lob(H)$ for every $i\in I$, and so $(1)$ holds.

Next, for every $j\in I$, since $K$ is symmetric, we have $p^{(i_0)}_{i_0,j}=k_{i_0i_0}k_{ji_0}-k_{i_0j}=k_{ji_0}-k_{i_0j}=0$.

Finally, let $r$ be the supremum of $\rank\bigl(p^{(i_0)}_{ij}\bigr)_{i,j\in F}$ as $F\subseteq I$ runs over finite sets. For any realization as in~$(1)$, the vectors $v_i\in u_{i_0}^\perp$ defined above satisfy $p^{(i_0)}_{ij}=-\ip{v_i}{v_j}$. Since $-\ip{\cdot}{\cdot}$ is a positive definite inner product on $u_{i_0}^{\perp}$, for every finite $F\subseteq I$, $\rank\bigl(p^{(i_0)}_{ij}\bigr)_{i,j\in F}\leq \dim H$. Hence $r\leq\dim H$. Conversely, since $P_{i_0}$ is positive semidefinite, it admits a Gram representation $p^{(i_0)}_{ij}=\langle {\bf x}_i,{\bf x}_j\rangle_H$ in a real Hilbert space $H$ of dimension $r$. Applying the construction in the implication $(2)\implies(1)$ gives a Lorentz-Gram realization of $K$ in $\Lob(H)$. Therefore the minimal possible dimension of $H$ is as desired.
\end{proof}

The anchored criterion depends formally on a distinguished index; we next show that it is equivalent to the intrinsic condition of having one positive square.

\subsection{Positive-square criterion}

We now pass from the anchored criterion to the intrinsic notion of Definition~\ref{defn:pos-ker}. The next result identifies the two.

\begin{theorem}[One positive square criterion]
\label{T:LG-int-char}
Let $I$ be a nonempty set, and let $K=(k_{ij})_{i,j\in I}$ be a real symmetric kernel satisfying $k_{ij}\geq k_{ii}=1$ for all $i,j\in I$. Fix $i_0\in I$, and define the anchored kernel $P_{i_0}=\bigl(p^{(i_0)}_{ij}\bigr)_{i,j\in I}$ by $p^{(i_0)}_{ij}:=k_{ii_0}k_{ji_0}-k_{ij}$. Then the following are equivalent:
\begin{enumerate}
\item
The kernel $K$ has one positive square.

\item
The anchored kernel $P_{i_0}$ is positive semidefinite.
\end{enumerate}

Moreover, if $P_{i_0}$ is positive semidefinite for one choice of $i_0\in I$, then the anchored kernel corresponding to every choice of $i_0\in I$ is positive semidefinite.
\end{theorem}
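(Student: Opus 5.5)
Since both conditions concern only finite restrictions (Definition~\ref{defn:pos-ker}), we reduce to a finite set $F\subseteq I$ containing $i_0$. One positive square of $K$ is equivalent to every such $K_F$ having at most one positive eigenvalue; the diagonal entries equal $1$, so some restriction has a positive eigenvalue automatically. Positive semidefiniteness of $P_{i_0}$ is likewise equivalent to $(P_{i_0})_F\succeq0$ for every finite $F\ni i_0$. A finite set not containing $i_0$ can be enlarged to include $i_0$, and both properties pass to principal submatrices. So it suffices to prove the equivalence for a finite symmetric matrix $K_F$ with $F\ni i_0$.

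The key step is a congruence identity. Order $F$ so that $i_0$ comes first, and write
\[
K_F=\begin{pmatrix}1&k^t\\ k&K'\end{pmatrix},\qquad k=(k_{ii_0})_{i\neq i_0}.
\]
Eliminating the first row and column by the unit lower-triangular matrix $L=\begin{pmatrix}1&0\\ -k&I\end{pmatrix}$ gives
\[
LK_FL^t=(1)\oplus\bigl(K'-kk^t\bigr)=(1)\oplus\bigl(-(P_{i_0})_{F\setminus\{i_0\}}\bigr).
\]
The row and column of $P_{i_0}$ at $i_0$ vanish, as noted in Theorem~\ref{T:LG-anchor-char}. Hence $(P_{i_0})_F$ is congruent to $0\oplus(P_{i_0})_{F\setminus\{i_0\}}$. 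By Sylvester's law of inertia, $n_+(K_F)=1+n_-\bigl((P_{i_0})_{F\setminus\{i_0\}}\bigr)$. Therefore $K_F$ has at most one positive eigenvalue if and only if $(P_{i_0})_F$ has no negative eigenvalue, that is, $(P_{i_0})_F\succeq0$. This proves $(1)\Leftrightarrow(2)$ for each finite $F$, and hence for the kernels.

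For the final assertion, observe that condition (1) does not involve $i_0$. If $P_{i_0}$ is positive semidefinite for one $i_0$, then $K$ has one positive square, and applying $(1)\Rightarrow(2)$ with any other anchor $i_1$ shows that $P_{i_1}$ is positive semidefinite. Alternatively, one could route this through Theorem~\ref{T:LG-anchor-char}: realize $K$ by vectors in $\Lob(H)$ and then apply $(1)\Rightarrow(2)$ there at the new anchor. The hypothesis $k_{ij}\geq1$ is not needed for the inertia argument; it matters only when connecting to realizations.

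I expect no serious obstacle. The one point needing care is the finite reduction, namely making sure that finite sets omitting $i_0$ are handled; this works because inertia bounds are inherited by principal submatrices (Cauchy interlacing).
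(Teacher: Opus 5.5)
Your argument is correct, and it takes a genuinely different route from the paper.

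The paper proves the equivalence through geometric realizations. For $(2)\Rightarrow(1)$ it realizes $K$ in $\Lob(H)$ by Theorem~\ref{T:LG-anchor-char} and uses that the Lorentz form has positive index one. For $(1)\Rightarrow(2)$ it sets $\rho(i,j):=\ach(k_{ij})$, invokes Krein's embedding theorem (Theorem~\ref{Krein-embedding}, i.e.\ the indefinite GNS/Pontryagin-space construction) to obtain a realization, and then anchors it.

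You stay inside finite matrices throughout. The single congruence $LK_FL^t=(1)\oplus\bigl(-(P_{i_0})_{F\setminus\{i_0\}}\bigr)$ does the work together with Sylvester's law, and Cauchy interlacing handles finite sets that omit $i_0$.

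Your route is elementary and self-contained, and it yields more:
\begin{enumerate}
\item the exact relation $\inn(K_F)=(1,0,0)+\inn\bigl(-(P_{i_0})_{F\setminus\{i_0\}}\bigr)$;
\item the observation that $k_{ij}\geq1$ plays no role in the equivalence, being needed only to place a realization on the positive sheet;
\item combined with the Gram-representation step of Theorem~\ref{T:LG-anchor-char}, a proof of Krein's theorem rather than a dependence on it.
\end{enumerate}
The paper's route is shorter given the cited results, and it presents the criterion as a direct offspring of Krein's classical theorem, which fits its historical narrative. The paper does use your congruence later, in the exact-recovery discussion, in the form $A_{C\times C}\sim(1)\oplus(-P_C^{(c)})$. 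The final assertion is handled identically in both.

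One small point: the vanishing of the $i_0$-th row and column of $P_{i_0}$ follows at once from symmetry and $k_{i_0i_0}=1$. You need not cite Theorem~\ref{T:LG-anchor-char} for it, since that theorem records it only under its equivalent conditions.
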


\begin{proof}
$(2)\implies(1)$. Suppose that $P_{i_0}$ is positive semidefinite. By Theorem~\ref{T:LG-anchor-char}, there exist a real Hilbert space $H$ and vectors $u_i\in\Lob(H)$, $i\in I$, such that $k_{ij}=\ip{u_i}{u_j}$ for all $i,j\in I$. Since the Lorentz form on $\R\oplus H$ has positive index one, every finite Lorentz-Gram matrix $(k_{ij})_{i,j\in F}$ for $F\subseteq I$ finite, has at most one positive eigenvalue. Since $k_{ii}=1$ for every $i\in I$, each nonempty finite restriction has a positive eigenvalue. Hence $K$ has one positive square.

$(1)\implies(2)$. Suppose that $K$ has one positive square. Define, for $i,j\in I$,
\[
\rho(i,j):=\ach(k_{ij}).
\]
Since $K$ is symmetric, $k_{ij}\geq1$, and $k_{ii}=1$, the map $\rho$ is symmetric, nonnegative, and satisfies $\rho(i,i)=0$. Moreover, $\cosh\rho(i,j)=k_{ij}$. Since $K$ has one positive square and $k_{ii}=1$, every nonempty finite restriction of $K$ has exactly one positive eigenvalue. Hence, by Theorem~\ref{Krein-embedding} with $R=1$, there exist a real Hilbert space $H$ and vectors $u_i\in\Lob(H)$ such that
\[
d_{\Lob}(u_i,u_j)=\rho(i,j).
\]
Consequently, $[u_i,u_j]=\cosh d_{\Lob}(u_i,u_j)=\cosh\rho(i,j)=k_{ij}$. Thus $K$ is a Lorentz-Gram kernel. Applying Theorem~\ref{T:LG-anchor-char} yields that $P_{i_0}$ is positive semidefinite.

Finally, if $P_{i_0}$ is positive semidefinite for one choice of $i_0$, then the implication $(2)\implies(1)$ shows that $K$ has one positive square. Applying $(1)\implies(2)$ with any $i_1\in I$ then shows that the corresponding anchored kernel $P_{i_1}$ is positive semidefinite. This proves the final assertion.
\end{proof}

Combining Theorems~\ref{T:LG-anchor-char} and~\ref{T:LG-int-char} proves Theorem~\ref{T:LG-ker-char}.

\section{Local gluing and chordality}

We now prove Theorem~\ref{T:chordal-characterization}. The argument has two ingredients. First, we show that two Lorentz-Gram realizations that agree on a common clique can be placed in a common Lorentz space so as to agree on that clique. This gives a one-edge completion lemma. We then combine this local gluing step with the standard one-edge-at-a-time construction for chordal graphs.

\subsection{Chordal completion by local gluing}

The local step is to glue two Lorentz-Gram realizations along their common fully specified part, as formalized next. Here, the only new entry is then the Lorentz inner product between the two vectors lying outside the common clique.

\begin{lemma}[Local gluing]\label{L:one-edge-lob-completion}
Let $u$ and $v$ be distinct indices not in $S$, and let $C$ be a partial symmetric matrix indexed by $S\cup\{u,v\}$ whose only unspecified entries are $c_{uv}=c_{vu}$. If the principal submatrices $C_1:=C_{(S\cup\{u\})\times(S\cup\{u\})}$ and $C_2:=C_{(S\cup\{v\})\times(S\cup\{v\})}$ are Lorentz-Gram matrices, then $c_{uv}=c_{vu}$ can be chosen so that $C$ is a Lorentz-Gram matrix.
\end{lemma}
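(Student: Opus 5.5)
Two cases are natural. If $S$ is nonempty, anchor at a vertex $s_0\in S$ and reduce to positive semidefinite completion, using Theorem~\ref{T:LG-anchor-char}. If $S=\emptyset$, handle it directly by taking $c_{uv}:=1$, which places $u$ and $v$ at the same point. That choice gives the Lorentz-Gram matrix $\bigl(\begin{smallmatrix}1&1\\1&1\end{smallmatrix}\bigr)$, realized by $u_u=u_v=(1,0)$. Any value $c_{uv}\ge1$ would also work, via two points on a geodesic.

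For $S\neq\emptyset$, form the anchored partial matrix $P$ on $S\cup\{u,v\}$ with entries $p_{ij}:=c_{is_0}c_{js_0}-c_{ij}$ wherever $c_{ij}$ is specified. Every such entry is determined because $s_0\in S$, so both $c_{us_0}$ and $c_{vs_0}$ are specified; the only unspecified entry of $P$ is $p_{uv}$. By Theorem~\ref{T:LG-anchor-char} applied to $C_1$ and $C_2$, the principal blocks $P_1$ on $S\cup\{u\}$ and $P_2$ on $S\cup\{v\}$ are positive semidefinite.

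I would then complete $P$ explicitly by Gram vectors. Realize $P_1$ by Euclidean vectors $\{x_i\}_{i\in S}\cup\{x_u\}$ and $P_2$ by $\{y_i\}_{i\in S}\cup\{y_v\}$. The Gram matrices on $S$ agree, so there is a linear isometry from $\Span\{y_i:i\in S\}$ onto $\Span\{x_i:i\in S\}$ sending $y_i$ to $x_i$; in a common Hilbert space $H$ of sufficient dimension it extends to an isometry $U$ of $H$. Set $x_v:=Uy_v$. Alternatively, decompose $x_u$ and $x_v$ into their components in $W:=\Span\{x_i:i\in S\}$ plus orthogonal residuals, and place the residuals in mutually orthogonal directions, which is the standard one-unspecified-entry PSD completion. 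Either way, the family $\{x_i\}_{i\in S}\cup\{x_u,x_v\}$ gives a positive semidefinite completion of $P$. Define $p_{uv}:=\langle x_u,x_v\rangle$ and
\[
c_{uv}:=c_{us_0}c_{vs_0}-p_{uv}.
\]

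Finally, set $u_i:=(c_{is_0},x_i)$ as in the proof of Theorem~\ref{T:LG-anchor-char}. Here $x_{s_0}=0$ because $p_{s_0s_0}=0$. These $u_i$ lie in $\Lob(H)$ since $c_{is_0}\ge1$ and $[u_i,u_i]=c_{is_0}^2-p_{ii}=1$, and $[u_i,u_j]=c_{ij}$ for every pair, including $(u,v)$ by the definition of $c_{uv}$. Hence the completed $C$ is a Lorentz-Gram matrix.

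I expect the main obstacle to be a hypothesis mismatch: Theorem~\ref{T:LG-anchor-char} assumes the kernel satisfies $k_{ij}\ge1$ everywhere, so I must check that $c_{uv}\ge1$ before invoking the theorem in either direction. I would not invoke it at the last step at all. The vectors $u_u$ and $u_v$ are built explicitly, and both lie on the positive sheet. By the reverse Cauchy--Schwarz inequality on the hyperboloid, two points of the positive sheet satisfy $[u_u,u_v]\ge1$, so $c_{uv}\ge1$ follows automatically and the completed $C$ is a Lorentz-Gram matrix directly from Definition~\ref{defn:LG-completion}.
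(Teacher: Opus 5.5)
Your argument is correct, but it takes a different route from the paper.

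\textbf{The paper's route.} The paper glues directly in Lorentz space. It places realizations of $C_1$ and $C_2$ in a common space $\R\oplus(H_1\oplus H_2)$. It then uses Lemma~\ref{L:neg-def} to show that the span of the separator vectors is nondegenerate, so that $y_s\mapsto x_s$ is a well-defined Lorentz isometry. This isometry is extended to the ambient space by Witt's theorem, and the paper must then check that the extension preserves the positive sheet before setting $c_{uv}:=[x_u,Ty_v]$.

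\textbf{Your route.} You use the fact that, for $S\neq\varnothing$, the anchor row at $s_0\in S$ is fully specified. Theorem~\ref{T:LG-anchor-char} then converts the problem into a one-entry positive semidefinite completion. In that Euclidean setting:
\begin{itemize}
\item well-definedness of $y_i\mapsto x_i$ is immediate, since a vanishing combination has zero norm;
\item extending the isometry is elementary linear algebra;
\item the sheet question disappears, because the time coordinate of $(c_{is_0},x_i)$ is $c_{is_0}\geq1$.
\end{itemize}

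\textbf{What each buys.} Your argument is more elementary. It shows that the Lorentzian gluing step is just the Grone--Johnson--S\'a--Wolkowicz one-entry positive semidefinite step, transported by the anchor transform. Your orthogonal-residual variant also has a concrete payoff. When $C_{S\times S}$ is nonsingular, it selects exactly the transfer value $C_{\{u\}\times S}C_{S\times S}^{-1}C_{S\times\{v\}}$ of Theorem~\ref{T:clique-product-completion}, i.e.\ the orthogonal-innovation choice. This holds because the Lorentz span of the separator vectors is $\R\oplus\Span\{x_s:s\in S\}$ in your coordinates. The paper's route is anchor-free and stays entirely within Lorentzian geometry, at the cost of Witt's theorem and the sheet-preservation argument.

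You also handle the hypothesis $k_{ij}\geq1$ correctly. It holds for $C_1$ and $C_2$ because they are Lorentz-Gram. For the completed matrix you never need to invoke the theorem, since you exhibit the realization explicitly.
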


\begin{proof}
If $S=\varnothing$ then choose arbitrary vectors $x_u$ and $y_v$ in a common Lobachevsky space and define $c_{uv}=c_{vu}:=[x_u,y_v]$. Suppose now that $S\neq\varnothing$. Choose Lorentz-Gram realizations $(x_s)_{s\in S}\cup\{x_u\}$ and $(y_s)_{s\in S}\cup\{y_v\}$ of $C_1$ and $C_2$, respectively. Since the two realizations involve only finitely many vectors, we may assume that they lie in finite-dimensional Lorentz spaces $\R\oplus H_1$ and $\R\oplus H_2$, respectively. By embedding these spaces into $\R\oplus(H_1\oplus H_2)=:\R\oplus H$ in the natural way, we may regard both realizations as lying in a common finite-dimensional Lorentz space. Also, note that the two families $(x_s)_{s\in S}$ and $(y_s)_{s\in S}$ have the same Lorentz-Gram matrix
$C_0:=C_{S\times S}$; that is $[y_s,y_t]=c_{st}=[x_s,x_t]$.

Set $Y:=\Span\{y_s:s\in S\}$ and $X:=\Span\{x_s:s\in S\}$. Observe that $X$ and $Y$ are nondegenerate. Indeed, fix $s_0\in S$. Since $x_{s_0},y_{s_0}\in\Lob(H)$, we have $[x_{s_0},x_{s_0}]=[y_{s_0},y_{s_0}]=1$. By Lemma~\ref{L:neg-def}, the Lorentz form is negative definite on $x_{s_0}^{\perp}$ and $y_{s_{0}}^{\perp}$, and so it follows that $X\cap X^\perp=\{0\}=Y\cap Y^\perp$. We next show that the correspondence $y_s\mapsto x_s$, $s\in S$, defines a linear map from $Y$ to $X$. Suppose that $\sum_{s\in S}\alpha_s y_s=0$ and set $z:=\sum_{s\in S}\alpha_s x_s$. For every $t\in S$, equality of the Gram matrices gives
\[
[z,x_t]
=
\sum_{s\in S}\alpha_s[x_s,x_t]
=
\sum_{s\in S}\alpha_s[y_s,y_t]
=
0.
\]
Thus $z\in X\cap X^\perp$, and hence $z=0$. Therefore the rule $T_0\left(\sum_{s\in S}\alpha_s y_s\right) := \sum_{s\in S}\alpha_s x_s$ is well defined. Moreover, if $y=\sum_{s\in S}\alpha_s y_s$ and $y'=\sum_{t\in S}\beta_t y_t$, then
\begin{align*}
[T_0y,T_0y']
=
\sum_{s,t\in S}\alpha_s\beta_t[x_s,x_t]
=
\sum_{s,t\in S}\alpha_s\beta_t[y_s,y_t]
=[y,y'].
\end{align*}
Hence $T_0:Y\to X$ is a Lorentz isometry satisfying $T_0y_s=x_s$ for every $s\in S$. By Witt's extension theorem \cite{ArtinGeometricAlgebra}, the isometry $T_0$ extends to a Lorentz isometry $T$ of the ambient finite-dimensional $\R\oplus H$. Furthermore, $T$ preserves the positive sheet. Indeed, $Ty_{s_0}=T_0y_{s_0}=x_{s_0}$, and both $y_{s_0}$ and $x_{s_0}$ belong to $\Lob(H)$. Since a Lorentz isometry either preserves the two sheets of the unit hyperboloid or interchanges them, $T$ must preserve the positive sheet. In particular, $Ty_v\in\Lob(H)$. The vectors $(x_s)_{s\in S}\cup\{x_u,Ty_v\}$ now realize all the prescribed entries of $C$. Indeed, for every $s\in S$, $[x_u,x_s]=c_{us}$, while
\[
[Ty_v,x_s]
=
[Ty_v,Ty_s]
=
[y_v,y_s]
=
c_{vs}.
\]
Define the unspecified entries by $c_{uv}=c_{vu}:=[x_u,Ty_v]$, completing the proof.
\end{proof}

The chordal structure allows this local step to be iterated. Indeed, the standard edge-addition characterization of chordal graphs permits the missing edges to be inserted one at a time while preserving chordality, and Lemma~\ref{L:one-edge-lob-completion} supplies the required matrix entry at each step. We can therefore prove the global completion theorem.

\begin{theorem}[Chordal existence]\label{T:chordal-existence}
Let $A$ be a partial symmetric matrix whose specification graph is chordal. Then $A$ admits a Lorentz-Gram completion if and only if every fully specified clique submatrix of $A$ is a Lorentz-Gram matrix.
\end{theorem}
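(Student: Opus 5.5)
Necessity is immediate: if $B$ is a Lorentz-Gram completion realized by $(u_i)_{i\in V}\subseteq\Lob(H)$, then every fully specified clique principal submatrix $A_{C\times C}=B_{C\times C}$ is the Lorentz-Gram matrix of the subfamily $(u_i)_{i\in C}$. For sufficiency, the plan is to induct on the number of missing edges $|\binom{V}{2}\setminus E|$, following the Grone--Johnson--S\'a--Wolkowicz and Bakonyi--Johnson scheme. If $G$ is complete, $V$ itself is a clique and the hypothesis says that $A$ is already Lorentz-Gram. Otherwise, I will invoke the standard edge-addition fact for chordal graphs: if $G$ is chordal and not complete, there is a non-edge $\{u,v\}$ such that $G+\{u,v\}$ is again chordal, and $G+\{u,v\}$ has a \emph{unique} maximal clique containing both $u$ and $v$. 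That clique has the form $S\cup\{u,v\}$, where $S$ is the common neighbourhood of $u$ and $v$ in $G$, and $S$ is a clique of $G$.

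Given such an edge, I apply Lemma~\ref{L:one-edge-lob-completion} to the principal submatrix of $A$ indexed by $S\cup\{u,v\}$. Its only unspecified entry is $a_{uv}$. The submatrices on $S\cup\{u\}$ and $S\cup\{v\}$ are fully specified clique principal submatrices of $A$, so they are Lorentz-Gram by hypothesis. The lemma therefore supplies a value $c$ for which the submatrix on $S\cup\{u,v\}$ is Lorentz-Gram. Let $A'$ be $A$ with $a'_{uv}=a'_{vu}:=c$. Its specification graph is $G+\{u,v\}$, which is chordal.

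I then check that $A'$ satisfies the clique hypothesis. A clique $K$ of $G+\{u,v\}$ either omits one of $u,v$, in which case it is a clique of $G$ with unchanged entries and is Lorentz-Gram by hypothesis, or it contains both. In the latter case $K\subseteq S\cup\{u,v\}$, by the uniqueness of the maximal clique containing $u$ and $v$. Then $A'_{K\times K}$ is a principal submatrix of a Lorentz-Gram matrix, which is Lorentz-Gram because one simply restricts the realizing family. Since $A'$ has one fewer missing edge, the induction hypothesis completes it to a Lorentz-Gram matrix, and that matrix is also a completion of $A$.

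The main obstacle is the graph-theoretic edge-addition lemma, the remaining steps being routine. If it is not available as a citation, I would prove it from a perfect elimination ordering (Dirac/Fulkerson--Gross): take the first vertex $v$ in the ordering that is not adjacent to all later vertices, and a suitable non-neighbour $u$. Alternatively, in a clique-tree take adjacent cliques $C,D$ with $u\in C\setminus D$ and $v\in D\setminus C$, and verify that adding $uv$ creates no chordless cycle of length at least four and that $(C\cap D)\cup\{u,v\}$ is the unique maximal clique containing $u$ and $v$, with $S=C\cap D$. Disconnected $G$ needs no special treatment, since the case $S=\varnothing$ is already covered by Lemma~\ref{L:one-edge-lob-completion}.
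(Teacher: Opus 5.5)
Your proposal is correct and follows essentially the paper's argument: the Bakonyi--Johnson one-edge-at-a-time scheme, using the Grone--Johnson--S\'a--Wolkowicz edge-addition lemma (the new edge lies in a unique maximal clique of the enlarged chordal graph), Lemma~\ref{L:one-edge-lob-completion} applied on $S\cup\{u,v\}$, and the same check that every clique of the new graph either avoids the added edge or lies inside that unique maximal clique. The only difference is cosmetic: you induct on the number of missing edges, whereas the paper fixes the chordal sequence $G=G_0,\dots,G_m=K_n$ in advance.
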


\begin{proof}
Necessity follows by restricting any completion to each fully specified clique. For sufficiency, we follow the one-edge-at-a-time completion method of Bakonyi and Johnson \cite[Theorem~3.3]{Bakonyi-Johnson}. By \cite[Lemmas~3 and~4]{Grone-Johnson-Sa-Wolkowicz}, there is a sequence of chordal graphs
\[
G=G_0,G_1,\ldots,G_m=K_n
\]
such that, for each $r$, the graph $G_r$ is obtained from $G_{r-1}$ by adding a single edge $\{u_r,v_r\}$, which is contained in a unique maximal clique $V_r$ of $G_r$.

We construct partial matrices $A^{(0)},A^{(1)},\ldots,A^{(m)}$ inductively so that $A^{(r)}$ has specification graph $G_r$, agrees with $A$ on every originally prescribed entry, and has a Lorentz-Gram matrix on every clique of $G_r$. Set $A^{(0)}:=A$; the hypotheses give the required properties for $r=0$. Suppose that $A^{(r-1)}$ has been constructed, and set $S_r:=V_r\setminus\{u_r,v_r\}$. Since $V_r$ is a clique of $G_r$ and $\{u_r,v_r\}$ is the only edge of $G_r$ absent from $G_{r-1}$, the only unspecified entries of $A^{(r-1)}_{V_r\times V_r}$ are those indexed by $(u_r,v_r)$ and $(v_r,u_r)$. Moreover, $S_r\cup\{u_r\}$ and $S_r\cup\{v_r\}$ are cliques of $G_{r-1}$. By the induction hypothesis, the corresponding principal submatrices are Lorentz-Gram matrices. Lemma~\ref{L:one-edge-lob-completion} therefore allows us to choose $a^{(r)}_{u_rv_r}=a^{(r)}_{v_ru_r}$ so that $A^{(r)}_{V_r\times V_r}$ is a Lorentz-Gram matrix. Leave all other entries unchanged.

It remains to verify the induction invariant. Let $Q$ be a clique of $G_r$. If $Q$ is already a clique of $G_{r-1}$, then $A^{(r)}_{Q\times Q}=A^{(r-1)}_{Q\times Q}$ is a Lorentz-Gram matrix by the induction hypothesis. Otherwise, $Q$ contains the newly added edge $\{u_r,v_r\}$. Since $V_r$ is the unique maximal clique of $G_r$ containing this edge, $Q\subseteq V_r$. Hence $A^{(r)}_{Q\times Q}$ is a principal submatrix of the Lorentz-Gram matrix $A^{(r)}_{V_r\times V_r}$ and is therefore itself a Lorentz-Gram matrix. Thus the induction proceeds. Thus, $G_m=K_n$, and so $A^{(m)}$ is fully specified, and hence is a Lorentz-Gram completion of $A$.
\end{proof}

\subsection{Obstructions on nonchordal graphs}

It remains to prove the converse direction of Theorem~\ref{T:chordal-characterization}. The obstruction already appears on a chordless cycle: one edge can be prescribed to have length larger than the sum of the remaining edge lengths, violating the triangle inequality in any global realization. We first record the elementary inheritance principle that allows such an obstruction to be detected on an induced subgraph.

\begin{proposition}[Inheritance of obstruction]
\label{P:completion-obstruction-inheritance}
Let $G=(V,E)$ be a finite graph, let $W\subseteq V$, and let $H:=G[W]$ be the induced subgraph of $G$. Suppose that $A=(a_{ij})_{i,j\in V}$ is a partial symmetric matrix with specification graph $G$. If the restriction $A_{W\times W}$ admits no Lorentz-Gram completion, then $A$ admits no Lorentz-Gram completion.
\end{proposition}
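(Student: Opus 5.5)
The argument is by contraposition: restriction to $W\times W$ sends any Lorentz-Gram completion of $A$ to a Lorentz-Gram completion of $A_{W\times W}$. So suppose, for contradiction, that $A$ has a Lorentz-Gram completion $B=(b_{ij})_{i,j\in V}$, and let $(u_i)_{i\in V}\subseteq\Lob(H)$ be vectors with $b_{ij}=[u_i,u_j]$, as in Definition~\ref{defn:LG-completion}.

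The first step is to check that $A_{W\times W}$ is a partial symmetric matrix whose specification graph is exactly $H=G[W]$. Every diagonal entry $a_{ii}$ with $i\in W$ is prescribed. For distinct $i,j\in W$, the entry $a_{ij}$ is prescribed precisely when $\{i,j\}\in E$. Since $H$ is induced, this holds precisely when $\{i,j\}$ is an edge of $H$. Symmetry of the prescribed entries is inherited from $A$.

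The second step is to show that $B_{W\times W}$ is a Lorentz-Gram completion of $A_{W\times W}$. It is fully specified and symmetric. It agrees with $A_{W\times W}$ at every prescribed position, because $B$ agrees with $A$ at every prescribed position of $A$. It is realized by the subfamily $(u_i)_{i\in W}\subseteq\Lob(H)$, since $b_{ij}=[u_i,u_j]$ for all $i,j\in W$. This contradicts the hypothesis that $A_{W\times W}$ admits no Lorentz-Gram completion.

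Equivalently, one can argue intrinsically: by Theorem~\ref{T:LG-ker-char}, principal submatrices inherit the property of having one positive square. There is no real obstacle here. The only point needing care is that $W$ induces the subgraph, so the prescribed pattern of $A_{W\times W}$ is exactly the specification graph $H$ and the restriction is a completion of the correct partial matrix. If $H$ were an arbitrary subgraph, $A_{W\times W}$ could carry additional prescribed entries.
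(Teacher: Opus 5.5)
Your proof is correct and is essentially the paper's argument: restrict a realizing family $(u_i)_{i\in V}$ to $W$, and use that $G[W]$ is induced to see that $B_{W\times W}$ agrees with every prescribed entry of $A_{W\times W}$, so it is a Lorentz-Gram completion of $A_{W\times W}$. The only blemish is notational: you use $H$ both for the induced subgraph and for the Hilbert space, whereas the paper writes $H_0$ for the latter to avoid the clash.
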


\begin{proof}
Suppose, to the contrary, that $A$ admits a Lorentz-Gram completion $B=(b_{ij})_{i,j\in V}$. Then there exist a real Hilbert space $H_0$ and vectors $u_i\in\Lob(H_0)$, $i\in V$, such that $b_{ij}=\ip{u_i}{u_j}$, $i,j\in V$. Restricting to the indices in $W$, the principal submatrix $B_{W\times W}
=
\bigl(\ip{u_i}{u_j}\bigr)_{i,j\in W}$ is a Lorentz-Gram matrix. Since $G[W]$ is induced, the prescribed entries of $A_{W\times W}$ are precisely the restrictions of the prescribed entries of $A$ to $W\times W$. Hence $B_{W\times W}$ is a Lorentz-Gram completion of $A_{W\times W}$, contrary to hypothesis. Therefore $A$ admits no Lorentz-Gram completion.
\end{proof}

We now exhibit the basic obstruction.

\begin{proposition}[Chordless-cycle obstruction]
\label{P:cycle-obstruction}
Let $C_k=v_1v_2\cdots v_kv_1$ denote a cycle on $k\geq 4$ vertices $V:=\{v_1,\dots,v_k\}$. Define a partial symmetric matrix $A=(a_{ij})_{i,j\in V}$ with specification graph $C_k$ by prescribing
\[
a_{v_iv_i}=1,
\qquad
a_{v_iv_{i+1}}=\cosh 1
\quad \mbox{for}\quad i=1,\dots,k-1, \quad\mbox{and}\quad
\qquad
a_{v_kv_1}=\cosh k.
\]
Then every fully specified clique principal submatrix of $A$ is Lorentz-Gram, but the full partial matrix $A$ admits no Lorentz-Gram completion.
\end{proposition}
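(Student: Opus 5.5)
The proof splits into two parts: the local clique conditions hold, and no global realization exists.

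For the local part, observe that $C_k$ with $k\geq4$ is triangle-free. Its cliques are therefore the singletons and the edges. A singleton clique gives the $1\times1$ matrix $(1)$, which is realized by any point of $\Lob(H)$. An edge $\{v_i,v_j\}$ gives the matrix
\[
\begin{pmatrix}1&\cosh\lambda\\ \cosh\lambda&1\end{pmatrix},\qquad \lambda\in\{1,k\}.
\]
It is realized in $\Lob(\R)$ by the points $(1,0)$ and $(\cosh\lambda,\sinh\lambda)$. Alternatively, Theorem~\ref{T:LG-ker-char} applies directly: the anchored kernel at $v_i$ is $\operatorname{diag}(0,\cosh^2\lambda-1)\lgeq0$. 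In either case every fully specified clique principal submatrix is Lorentz-Gram.

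For the nonexistence, argue by contradiction. Suppose $B$ is a Lorentz-Gram completion, realized by points $u_{v_1},\dots,u_{v_k}\in\Lob(H)$. Then $d_{\Lob}(u_{v_i},u_{v_j})=\ach b_{v_iv_j}$, so the prescribed entries force
\[
d_{\Lob}(u_{v_i},u_{v_{i+1}})=1 \quad (1\le i\le k-1),\qquad d_{\Lob}(u_{v_k},u_{v_1})=k.
\]
Apply the triangle inequality for $d_{\Lob}$ along the path $v_1v_2\cdots v_k$:
\[
k=d_{\Lob}(u_{v_1},u_{v_k})\le\sum_{i=1}^{k-1}d_{\Lob}(u_{v_i},u_{v_{i+1}})=k-1,
\]
which is a contradiction. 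Hence no Lorentz-Gram completion exists.

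The main obstacle is justifying that $d_{\Lob}=\ach[\cdot,\cdot]$ satisfies the triangle inequality on $\Lob(H)$ for an arbitrary real Hilbert space $H$. The paper already invokes this in Definition~\ref{D:distortion-Lorentz-Gram-realization}, so it may be cited. For a self-contained argument, I would reduce to three points spanning a subspace of dimension at most $3$, which is a copy of $\Lob(\R^2)$ by Lemma~\ref{lem:sign-inn}. Since Lorentz isometries preserve $d_{\Lob}$, I would move one point to $(1,0,0)$ and write the others as $(\cosh a,\sinh a\,\xi)$ and $(\cosh b,\sinh b\,\eta)$ with $\xi,\eta$ unit vectors. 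Then
\[
[\cdot,\cdot]=\cosh a\cosh b-\sinh a\sinh b\,\langle\xi,\eta\rangle\le\cosh(a+b),
\]
which gives the inequality after applying $\ach$, since $\ach$ is increasing.

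An alternative purely algebraic route would show that the anchored kernel of the $3\times3$ matrix on $\{v_1,v_{k-1},v_k\}$ fails to be positive semidefinite. That requires an induction on path length, however, so the metric route is cleaner.
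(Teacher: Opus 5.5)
Your proof is correct and follows the paper's argument. The cliques of $C_k$ are the singletons and the edges, each $2\times2$ clique matrix is Lorentz-Gram, and any global realization would violate the hyperbolic triangle inequality since $k>k-1$. Your self-contained check of the triangle inequality via $\cosh a\cosh b-\sinh a\sinh b\,\langle\xi,\eta\rangle\le\cosh(a+b)$ is a valid supplement to what the paper simply invokes.
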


\begin{proof}
The maximal cliques of $C_k$ are its edges, and every prescribed $2\times2$ principal submatrix is Lorentz-Gram. Suppose that a Lorentz-Gram completion existed. Then there would be points $u_{v_1},\ldots,u_{v_k}$ in Lobachevsky space such that $d_{\Lob}(u_{v_i},u_{v_{i+1}})=1$ for all $1\leq i\leq k-1$, while $d_{\Lob}(u_{v_k},u_{v_1})=k$. By the triangle inequality,
\[
d_{\Lob}(u_{v_1},u_{v_k})
\leq
\sum_{i=1}^{k-1}
d_{\Lob}(u_{v_i},u_{v_{i+1}})
=
k-1,
\]
contradicting $d_{\Lob}(u_{v_1},u_{v_k})=k$. Hence no Lorentz-Gram completion exists.
\end{proof}

To pass from a chordless cycle to an arbitrary nonchordal graph, it
remains only to extend these obstructed cycle data to the remaining
edges while preserving the Lorentz-Gram condition on every clique.

\begin{theorem}[Nonchordal obstruction]
\label{T:nonchordal-obstruction}
Let $G=(V,E)$ be a finite nonchordal graph. Then there exists a
partial symmetric matrix $A=(a_{ij})_{i,j\in V}$ with specification
graph $G$ such that every fully specified clique principal submatrix of $A$ is Lorentz-Gram, but $A$ admits no Lorentz-Gram completion.
\end{theorem}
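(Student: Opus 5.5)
Since $G$ is nonchordal, it contains an induced cycle $C_k=v_1v_2\cdots v_kv_1$ with $k\geq4$; let $W:=\{v_1,\dots,v_k\}$. On $W$ we prescribe exactly the data of Proposition~\ref{P:cycle-obstruction}: $a_{v_iv_{i+1}}=\cosh1$ for $1\leq i\leq k-1$ and $a_{v_kv_1}=\cosh k$. Because $G[W]=C_k$ is induced, these are the only prescribed off-diagonal entries inside $W\times W$. That proposition shows $A_{W\times W}$ has no Lorentz-Gram completion, and Proposition~\ref{P:completion-obstruction-inheritance} then rules out any Lorentz-Gram completion of $A$, whatever values we put on the remaining edges. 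What remains is to assign the other entries so that every clique principal submatrix of $A$ is Lorentz-Gram.

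For every edge of $G$ not contained in $W$, and for every diagonal entry, I would use the trivial value $1$. The coincident-point configuration has Lorentz-Gram matrix all ones. So if a clique $Q$ has at most one vertex in $W$, every off-diagonal entry of $A_{Q\times Q}$ equals $1$. Then $A_{Q\times Q}=\mathbf 1\mathbf 1^t$, which is realized by placing all its points at a single point of $\Lob(\R)$, so it is Lorentz-Gram.

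The main point is the cliques meeting $W$ in two or more vertices. Since $G[W]$ is an induced cycle of length at least four, it contains no triangles, so any clique of $G$ meets $W$ in at most two vertices. If it meets $W$ in exactly two, these vertices must be adjacent in $G$, hence consecutive on the cycle: say $v_i,v_{i+1}$ (indices modulo $k$), with prescribed value $c:=\cosh\lambda$, where $\lambda\in\{1,k\}$. All other off-diagonal entries of $A_{Q\times Q}$ are $1$.

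I would realize such a clique explicitly in $\Lob(\R)$ using the unique realization that matches these entries. Put $u_{v_i}:=(1,0)$ and $u_{v_{i+1}}:=(\cosh\lambda,\sinh\lambda)$. The remaining vertices $w\in Q\setminus W$ must satisfy $[u_w,u_{v_i}]=[u_w,u_{v_{i+1}}]=1$ and so would have to coincide with both points. This is impossible when $\lambda>0$, so the all-ones choice fails for such cliques.

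The fix is to choose the new entries geometrically rather than uniformly. Fix a global realization first, then read off all entries from it:
\begin{enumerate}
\item Place the cycle vertices $v_1,\dots,v_k$ at points $p_1,\dots,p_k$ of some $\Lob(H)$ with $d(p_i,p_{i+1})=1$ for $i<k$. These positions can be arbitrary; the closing edge will not be realized.
\item For each $w\in V\setminus W$: if $w$ has no neighbor in $W$, place it at $p_1$. If $w$ has neighbors in $W$, then in a clique containing $w$ those neighbors are at most two consecutive cycle vertices, so place $w$ at the position of one chosen neighbor $v_{j(w)}$.
\item Set every prescribed entry equal to the cosine-hyperbolic of the distance between the chosen positions, except the edge $v_kv_1$, which gets $\cosh k$.
\end{enumerate}

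With these choices, every clique avoiding the pair $\{v_k,v_1\}$ is realized by the chosen points and is therefore Lorentz-Gram. For a clique containing both $v_k$ and $v_1$, I would realize that clique alone. Put $v_k$ and $v_1$ at distance $k$, and each other member $w$ at the position of $v_{j(w)}\in\{v_k,v_1\}$. These entries agree with the global choice provided $j(w)$ is well defined, which I would arrange by choosing $j(w)$ consistently in advance. The delicate point is when $w$ is adjacent to $v_k$ and $v_1$ and also lies in other cliques. The global realization gives $[u_w,u_{v_k}]$ and $[u_w,u_{v_1}]$ as $\cosh$ of distances to $p_k$ and $p_1$, and these must match the local picture in which $v_k,v_1$ are at distance $k$ rather than $d(p_k,p_1)$. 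I expect this consistency check to be the main obstacle. The first thing I would try is to make it automatic: set $j(w)=v_1$ whenever possible, and choose the positions with $p_k$ and $p_1$ far apart, namely $p_1,\dots,p_k$ on a geodesic with $d(p_1,p_k)=k-1$. A clique containing $w,v_k,v_1$ then needs only the three values $1$, $\cosh(k-1)$ and $\cosh k$, and one can then check directly that the resulting $3\times3$ matrix has one positive square.
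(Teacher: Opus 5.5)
Your outer structure (an induced cycle $C_k$, the data of Proposition~\ref{P:cycle-obstruction} on it, and Proposition~\ref{P:completion-obstruction-inheritance}) matches the paper. Cliques not containing both $v_k$ and $v_1$ are also fine under your scheme. The gap is in the remaining case, and it cannot be fixed by bookkeeping.

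Suppose some $w\notin W$ is adjacent to both $v_1$ and $v_k$, so that $\{w,v_1,v_k\}$ is a clique. Your rule puts $w$ at some $p_j$. Since consecutive points are at distance $1$, we get $d(p_j,p_1)+d(p_j,p_k)\le (j-1)+(k-j)=k-1<k$. The lengths prescribed on this triangle therefore violate the triangle inequality. So its clique matrix is not Lorentz-Gram, for every choice of the positions $p_i$ and of $j(w)$.

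Your concrete choice shows this directly. In the order $v_1,v_k,w$ the clique matrix is
\[
\begin{pmatrix}
1&\cosh k&1\\
\cosh k&1&\cosh(k-1)\\
1&\cosh(k-1)&1
\end{pmatrix}.
\]
Here $a_{v_1w}=1$ forces $w$ and $v_1$ to coincide in any realization, which would force $a_{v_kw}=\cosh k\neq\cosh(k-1)$. Equivalently, the determinant is $-(\cosh k-\cosh(k-1))^2<0$ while the trace is $3$, so the matrix has two positive eigenvalues.

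You cannot avoid this case by choosing the cycle or its long edge differently. In the wheel (a hub joined to every vertex of $C_k$), the rim is the only induced cycle of length at least four, and every rim edge lies in a triangle with the hub.

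The missing idea is to keep the non-cycle vertices far from the cycle rather than on it. The paper does this clique by clique:
\begin{itemize}
\item every edge joining a cycle vertex to a non-cycle vertex gets length $M\geq k/2$;
\item every edge between two non-cycle vertices gets length $0$;
\item a clique is realized by sending all its non-cycle members to one point at distance $M$ from its (at most two, consecutive) cycle members.
\end{itemize}
The required isosceles triangle with sides $M,M,\lambda$, $\lambda\in\{1,k\}$, exists because $\lambda\le 2M$.

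Your global-placement scheme can be repaired in the same spirit: put every vertex outside $W$ at a single point $q$ with $d(q,p_1)\geq k$. Cliques not containing both $v_1$ and $v_k$ are still realized by the global configuration. A clique containing both then only needs a hyperbolic triangle with sides $d(q,p_1)$, $d(q,p_k)$, $k$, with all its non-cycle members placed at the third vertex. Such a triangle exists because $|d(q,p_1)-d(q,p_k)|\le d(p_1,p_k)\le k-1$ and $d(q,p_1)+d(q,p_k)\geq k$.
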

\begin{proof}
Since $G$ is nonchordal, it contains an induced chordless cycle $C_k=v_1v_2\cdots v_kv_1$, $k\geq4$. Choose $M\geq k/2$. We prescribe $a_{ii}=1$ for every $i\in V$
and, writing $a_{ij}=\cosh\lambda_{ij}$ for $\{i,j\}\in E$, we set
\[
\lambda_{v_iv_{i+1}}=1,
\qquad 1\leq i\leq k-1,
\qquad
\lambda_{v_kv_1}=k.
\]
For every remaining prescribed edge, we set
\[
\lambda_{ij}
:=
\begin{cases}
M, & \text{if exactly one of $i,j$ belongs to $C_k$},\\
0, & \text{if $i,j\notin C_k$}.
\end{cases}
\]
Verify the clique condition for $A=(a_{ij})_{i,j\in V}$. Let $Q$ be a clique of $G$; then $A_{Q\times Q}$ is fully specified (by the above definition of $A$). Since $C_k$ is induced and
chordless, $Q$ contains at most two vertices
of $C_k$, and if it contains two vertices, they must be consecutive in the cycle. Now, there are three possibilities, and we present with the realization in Lobachevsky space.

\begin{enumerate}
\item 
If $Q\cap C_k=\varnothing$, all prescribed distances within $Q$ are zero, so $A_{Q\times Q}$ is realized by a single point of Lobachevsky space.

\item 
If $Q\cap C_k=\{v_i\}$, then choose two points $u,w$ with $d_{\Lob}(u,w)=M$, and represent $v_i$ by $u$, and every vertex of $Q\setminus\{v_i\}$ by $w$.

\item 
Suppose that $Q\cap C_k=\{v_i,v_j\}$. Then $\{v_i,v_j\}$ is an edge of $C_k$, of length
$\lambda\in\{1,k\}$. Since $2M\geq k\geq\lambda$, the numbers $M,M,\lambda$
satisfy the triangle inequalities. More explicitly, by the hyperbolic law of cosines, it suffices to choose $\theta\in[0,\pi]$ such that
\[
\cosh\lambda
=
\cosh^2 M-\sinh^2 M\cos\theta.
\]
Such a $\theta$ exists because $\lambda\leq2M$, which implies
\[
\begin{aligned}
\frac{\cosh^2 M-\cosh\lambda}{\sinh^2 M}
=
\frac{\sinh^2 M+1-\cosh\lambda}{\sinh^2 M}
&=
1-\frac{\cosh\lambda-1}{\sinh^2 M}\\
=
1-\frac{2\sinh^2(\lambda/2)}{\sinh^2 M}
=
1-2\left(\frac{\sinh(\lambda/2)}{\sinh M}\right)^2 &\in [-1,1].
\end{aligned}
\]
Once such a $\theta\in[0,\pi]$ is chosen, we set the following in $\Lob(\R^2)$:
\begin{align*}
w:=(1,0,0),\qquad
u_i&:=(\cosh M,\sinh M,0),\\
\mbox{and}\qquad\qquad u_j&:=
(\cosh M,\sinh M\cos\theta,\sinh M\sin\theta).
\end{align*}
Then $[w,u_i]=[w,u_j]=\cosh M$, while
$[u_i,u_j]
=
\cosh^2 M-\sinh^2 M\cos\theta
=
\cosh\lambda$. Thus there exist points $u_i,u_j,w$ in Lobachevsky space such that
\[
d_{\Lob}(u_i,u_j)=\lambda,
\qquad
d_{\Lob}(u_i,w)=d_{\Lob}(u_j,w)=M.
\]
Now represent $v_i,v_j$ by $u_i,u_j$, respectively, and every vertex of $Q\setminus\{v_i,v_j\}$ by $w$. Thus obtain that $A_{Q\times Q}$ is Lorentz-Gram.
\end{enumerate}
Since $Q$ is arbitrary, every fully specified clique principal submatrix of $A$ is Lorentz-Gram.

On the other hand, the restriction $A_{C_k\times C_k}$ is precisely the chordless-cycle obstruction in Proposition~\ref{P:cycle-obstruction}: its first $k-1$ edge lengths are $1$, while the remaining edge has length $k$. Hence $A_{C_k\times C_k}$ admits no Lorentz-Gram completion. Since $C_k$ is an induced subgraph of $G$, Proposition~\ref{P:completion-obstruction-inheritance} implies that $A$ admits no Lorentz-Gram completion.
\end{proof}

The chordal existence Theorem~\ref{T:chordal-existence} and obstruction Theorem~\ref{T:nonchordal-obstruction} together prove Theorem~\ref{T:chordal-characterization}. Thus chordality is exactly the condition under which local Lorentz-Gram compatibility on the cliques guarantees a global Lorentz-Gram completion.

\section{Edge signings and geodesic rectification}\label{S:geodesicrectification}

We now examine the signed-path construction of Theorem~\ref{T:tree-completion} in more detail. Besides proving that the construction gives a Lorentz-Gram completion, we describe the geometric
role of the edge-signing, the resulting freedom in the unspecified
entries, and the corresponding geodesic realization in
$\Lob(\R)$. For finite trees, this also yields a direct
recursive algorithm for constructing the realizing points.

\begin{proof}[Proof of Theorem~\ref{T:tree-completion}] Consider the space $\Lob(\R)$. For each $i\in V$, define $u_i:=\bigl(\ch(\bt_i),\sh(\bt_i)\bigr)$. Since $[u_i,u_i]=\ch^2(\bt_i)-\sh^2(\bt_i)=1$ and $\ch(\bt_i)>0$, it follows that $u_i\in\Lob(\R)$. The hyperbolic difference identity yields:
\begin{align*}
[u_i,u_j]
&=
\ch(\bt_i)\ch(\bt_j)-\sh(\bt_i)\sh(\bt_j)\\
&=
\ch(\bt_i-\bt_j)
=
b_{ij}.
\end{align*}
Therefore $B$ is a Lorentz-Gram kernel. It remains to verify that $B$ agrees with $A$ on every prescribed entry. Note that for each $i\in V$, we have $b_{ii}=\ch(0)=1=a_{ii}$. Now let $\{i,j\}\in E$. Since the tree is rooted at $0$, one of $i$ and $j$ is the parent of the other. After interchanging $i$ and $j$, if necessary, we may assume that $i$ is the parent of $j$. Then we have the disjoint union: $P(0,j)=P(0,i)\sqcup\{\{i,j\}\}$. Consequently, we have $\bt_j-\bt_i=\varepsilon_{\{i,j\}}\lambda_{\{i,j\}}$. Therefore, since $\ch$ is even, we obtain the following:
\begin{align*}
b_{ij}
&=
\ch(\bt_i-\bt_j)=
\ch\bigl(\varepsilon_{\{i,j\}}\lambda_{\{i,j\}}\bigr)\\
&=
\ch(\lambda_{\{i,j\}})
=
a_{ij}.
\end{align*}
Therefore, $B$ is a completion of $A$. Since $\bt_0=0$, we have $u_0=(1,0)$, and hence $b_{i0}=[u_i,u_0]=\ch(\bt_i)$, which gives the stated formula for the anchor row and column. Finally,
\begin{align*}
b_{i0}b_{j0}-b_{ij}
&=
\ch(\bt_i)\ch(\bt_j)-\ch(\bt_i-\bt_j)\\
&=
\ch(\bt_i)\ch(\bt_j)
-
\bigl(
\ch(\bt_i)\ch(\bt_j)
-
\sh(\bt_i)\sh(\bt_j)
\bigr)\\
&=
\sh(\bt_i)\sh(\bt_j),
\end{align*}
for $i,j\in V$. Setting $s_i:=\sh(\bt_i)$, we obtain $\bp_0(B)=ss^T$. Hence $\bp_0(B)$ is positive semidefinite and has rank at most one. This completes the proof.
\end{proof}

\begin{remark}[Geometric meaning of the edge-signing]
For an edge $e=\{i,j\}$, the prescribed value
$a_{ij}=\cosh(\lambda_e)$ determines the hyperbolic distance
$\lambda_e$, but not the orientation of the corresponding displacement along $\Lob(\R)$. Indeed,
\[
\bt_j-\bt_i=\pm\lambda_e,
\]
and the sign $\varepsilon_e$ records this choice. For instance, consider the path graph $0\!-\!1\!-\!2$, and set
$\lambda_{01}:=\ach(a_{01})$ and
$\lambda_{12}:=\ach(a_{12})$. After fixing
$\varepsilon_{\{0,1\}}=1$, the two choices
$\varepsilon_{\{1,2\}}=\pm1$ give $\bt_1=\lambda_{01}$,
and $\bt_2=\lambda_{01}\pm\lambda_{12}$. Thus the prescribed entries remain unchanged, whereas $b_{02}=\ch(\lambda_{01}\pm\lambda_{12})$
may depend on the choice of sign.
\end{remark}

% \begin{remark}
% Theorem~\ref{T:tree-completion} gives the entire Lorentz-Gram completion directly.
% Moreover, the kernel appearing in the anchor criterion of Theorem~\ref{T:LG-ker-char} is explicitly $\bp_0(B):=ss^T$ where each 
% $s_i=\sh(\bt_i)$. Thus, in the tree case, the positive semidefinite kernel associated
% with the anchor reduction admits an explicit rank-one realization once
% an edge-signing is fixed.
% \end{remark}

% \begin{remark}[Geometric meaning of the edge-signing]
% For an edge $e=\{i,j\}$, the prescribed value
% $a_{ij}=\ch(\lambda_e)$ determines the hyperbolic distance
% $\lambda_e$, but not the orientation of the corresponding
% displacement along $\Lob(\R)$. Indeed,
% \[
% \bt_j-\bt_i=\pm\lambda_e,
% \]
% and the sign $\varepsilon_e$ records this choice. Thus the signed
% path coordinates encode the successive choices of direction made while
% placing the vertices on a common hyperbolic geodesic. Different
% edge-signings may consequently produce different values for the
% unspecified entries while preserving all prescribed entries. 

% For instance, consider the path graph $0\!-\!1\!-\!2$, and set
% $\lambda_{01}:=\ach(a_{01})$ and
% $\lambda_{12}:=\ach(a_{12})$. After fixing
% $\varepsilon_{\{0,1\}}=1$, the two choices
% $\varepsilon_{\{1,2\}}=\pm1$ give $\bt_1=\lambda_{01}$,
% and $\bt_2=\lambda_{01}\pm\lambda_{12}$. Thus the prescribed entries remain unchanged, whereas $b_{02}=\ch(\lambda_{01}\pm\lambda_{12})$
% may depend on the choice of sign.
% \end{remark}

\begin{remark}[A global sign redundancy]
The edge-signing parametrization has a global twofold redundancy.
Indeed, replacing $\varepsilon$ by $-\varepsilon$ sends each
$\bt_i$ to $-\bt_i$, while $\ch\bigl((-\bt_i)-(-\bt_j)\bigr)
=
\ch(\bt_i-\bt_j)$. Thus $\varepsilon$ and $-\varepsilon$ produce the same completion.
\end{remark}

%\begin{remark}[The canonical rooted completion]
% The constant edge-signing $\varepsilon_e=1$, $e\in E$, determines a distinguished completion, which we call the canonical rooted completion relative to the anchor $0$. 
\begin{remark}[The all-positive signing]
The constant edge-signing $\varepsilon_e=1$, $e\in E$, determines a
distinguished completion relative to the root $0$. In this case, $\bt_i=\sum_{e\in P(0,i)}\lambda_e$, so $\bt_i$ is the weighted graph distance from $0$ to $i$, with edge lengths $\lambda_e$. Consequently, $b_{i0}
=
\ch\left(\sum_{e\in P(0,i)}\lambda_e\right)$for $i\in V$. The canonical anchor row may also be constructed recursively. Suppose that $j$ is a child of $i$ in the tree rooted at $0$. Then $\bt_j=\bt_i+\lambda_{\{i,j\}}$, and hence
\begin{align}
b_{j0}
&=
\ch\bigl(\bt_i+\lambda_{\{i,j\}}\bigr)\notag\\
&=
\ch(\bt_i)\ch(\lambda_{\{i,j\}})
+
\sh(\bt_i)\sh(\lambda_{\{i,j\}})\notag\\
&=
b_{i0}a_{ij}
+
\sqrt{b_{i0}^{\,2}-1}\sqrt{a_{ij}^{\,2}-1}.
\label{eq:tree-anchor-recursion}
\end{align}
Indeed, in this construction one has $\bt_i\geq0$ and $\lambda_{\{i,j\}}\geq0$, so that $\sh(\bt_i)=\sqrt{b_{i0}^{\,2}-1}$ and 
$\sh(\lambda_{\{i,j\}})=\sqrt{a_{ij}^{\,2}-1}$. Starting from $b_{00}=1$, formula \eqref{eq:tree-anchor-recursion} determines the entire anchor row and column by proceeding successively away from the root.
\end{remark}

\begin{remark}[Geodesic rectification]
Since $\lambda_{\{i,j\}}=\ach(a_{ij})$ is the hyperbolic distance
prescribed on the edge $\{i,j\}$, Theorem~\ref{T:tree-completion} may be viewed as a
geodesic rectification of the edge-weighted rooted tree in
$\Lob(\R)$, with the edge-signing recording the direction of each
edge displacement along the geodesic. See Figure~\ref{fig:tree-rect-hyperbola} for a visual guide.
\end{remark}

For a finite tree, the signed-path completion and its realization can be constructed recursively as follows.

\begin{algorithm}[H]
\caption{Signed-path Lorentz-Gram completion and realization of a tree}
\label{alg:one-geodesic-tree-completion}
\begin{algorithmic}[1]

\Require
A finite tree $G=(V,E)$ with $a_{ii}=1$ and prescribed edge entries
$a_{ij}\geq1$; a root $0\in V$; an edge-signing
$\varepsilon:E\to\{\pm1\}$.

\Ensure
Points $(u_i)_{i\in V}\subset\Lob(\R)$ and their Lorentz-Gram
matrix $B$ completing $A$.

\State Orient every edge away from $0$ and order
$V=\{v_1,\ldots,v_n\}$ so that $v_1=0$ and every parent precedes
its children.
\State $\bt_{v_1}\gets0$ and $u_{v_1}\gets(1,0)$.

\For{$k=2,\ldots,n$}
    \State Let $v_{\pi(k)}$ be the parent of $v_k$.
    \State $\lambda_k\gets\ach(a_{v_{\pi(k)}v_k})$.
    \State
    $\bt_{v_k}\gets
    \bt_{v_{\pi(k)}}+
    \varepsilon_{\{v_{\pi(k)},v_k\}}\lambda_k$.
    \State
    $u_{v_k}\gets\bigl(\ch\bt_{v_k},\sh\bt_{v_k}\bigr)$.
\EndFor

\State
$B\gets\bigl(\ch(\bt_i-\bt_j)\bigr)_{i,j\in V}$.
\State \Return $\bigl((u_i)_{i\in V},B\bigr)$.

\end{algorithmic}
\end{algorithm}

\begin{figure}[p]
\centering

\begin{tikzpicture}[
    >=Latex,
    every node/.style={font=\small},
    edgeAstyle/.style={draw=edgeA, line width=2.8pt},
    edgeBstyle/.style={draw=edgeB, line width=2.8pt},
    edgeCstyle/.style={draw=edgeC, line width=2.8pt},
    edgeDstyle/.style={draw=edgeD, line width=2.8pt},
    hyperbola/.style={draw=black, line width=1.7pt},
    asymptote/.style={draw=gray!75, dashed, line width=0.8pt},
    axis/.style={draw=black, thick, -{Latex[length=2.8mm]}},
    point/.style={circle, fill=black, inner sep=2.4pt},
    displacement/.style={
        line width=2.6pt,
        postaction={
            decorate
        },
        decoration={
            markings,
            mark=at position 0.62 with
            {\arrow{Latex[length=3.2mm,width=2.4mm]}}
        }
    }
]

%%%%%%%%%%%%%%%%%%%%%%%%%%%%%%%%%%%%%%%%%%%%%%%%%%%%%%%%%%%%%
%% PANEL (a): ROOTED TREE
%%%%%%%%%%%%%%%%%%%%%%%%%%%%%%%%%%%%%%%%%%%%%%%%%%%%%%%%%%%%%
\begin{scope}
    \node[font=\bfseries\Large] at (-5.1,1.2) {(a)};
    \node[font=\large, text=titlecol] at (0,1.2)
        {Rooted edge-weighted tree};

    % Coordinates
    \coordinate (r)  at (0,0);
    \coordinate (l1) at (-2.2,-2.0);
    \coordinate (r1) at ( 2.2,-2.0);
    \coordinate (l2) at (-2.2,-4.3);
    \coordinate (r2) at ( 2.2,-4.3);

    % Edges
    \draw[edgeAstyle] (r)  -- (l1); % 01
    \draw[edgeCstyle] (r)  -- (r1); % 02
    \draw[edgeBstyle] (l1) -- (l2); % 13
    \draw[edgeDstyle] (r1) -- (r2); % 24

    % Nodes
    \node[
        circle,
        draw,
        double,
        double distance=1.2pt,
        fill=white,
        inner sep=4.6pt
    ] at (r) {};

    \foreach \P in {l1,r1,l2,r2}
    {
        \node[
            circle,
            draw,
            double,
            double distance=1.2pt,
            fill=black,
            inner sep=4.6pt
        ] at (\P) {};
    }

    % Vertex labels
    \node[font=\Large] at (0,0.68)       {$\mathbf 0$};
    \node[font=\Large] at (-2.85,-1.95)  {$\mathbf 1$};
    \node[font=\Large] at ( 2.85,-1.95)  {$\mathbf 2$};
    \node[font=\Large] at (-2.85,-4.25)  {$\mathbf 3$};
    \node[font=\Large] at ( 2.85,-4.25)  {$\mathbf 4$};

    % Edge labels
    \node[align=left, text=edgeA] at (-2.3,-0.65)
    {
        $\lambda_{01}$\\[0.6mm]
        $\varepsilon_{01}=+1$
    };

    \node[align=left, text=edgeC] at (2.3,-0.65)
    {
        $\lambda_{02}$\\[0.6mm]
        $\varepsilon_{02}=-1$
    };

    \node[align=left, text=edgeB] at (-3.3,-3.10)
    {
        $\lambda_{13}$\\[0.6mm]
        $\varepsilon_{13}=+1$
    };

    \node[align=left, text=edgeD] at (3.3,-3.10)
    {
        $\lambda_{24}$\\[0.6mm]
        $\varepsilon_{24}=-1$
    };

\end{scope}

%%%%%%%%%%%%%%%%%%%%%%%%%%%%%%%%%%%%%%%%%%%%%%%%%%%%%%%%%%%%%
%% PANEL (b): RECTIFICATION ON THE HYPERBOLA
%%%%%%%%%%%%%%%%%%%%%%%%%%%%%%%%%%%%%%%%%%%%%%%%%%%%%%%%%%%%%
\begin{scope}[yshift=-9.0cm]

\node[font=\bfseries\Large] at (-6.0,2.0) {(b)};

\node[font=\large, text=titlecol] at (0,2.0) {Geodesic rectification into $\Lob(\R)$};

%\node[font=\large] at (4,3.15) {$\gamma(t)=(\ch t,\sh t)$};

% TikZ-safe definitions of cosh and sinh
\pgfmathdeclarefunction{mycosh}{1}{%
    \pgfmathparse{(exp(#1)+exp(-#1))/2}%
}
\pgfmathdeclarefunction{mysinh}{1}{%
    \pgfmathparse{(exp(#1)-exp(-#1))/2}%
}

% Numerical parameters for the placement of points
\def\Tfour{-1.50}
\def\Ttwo{-0.90}
\def\Tzero{0}
\def\Tone{0.90}
\def\Tthree{1.50}

\begin{scope}[shift={(-2,-3.2)},xscale=1.7,yscale=1.7]

% Axes
\draw[->,very thick](-1.25,0) -- (3.25,0) node[right] {$x_0$};

\draw[->,very thick](0,-1.85) -- (0,1.85) node[above] {$x_1$};

% Asymptotes
\draw[dashed,gray!75,very thick](0,0) -- (2.65,2.65);

\draw[dashed,gray!75,very thick](0,0) -- (2.65,-2.65);

% The genuine hyperbola

\draw[
    black,
    line width=1.7pt,
    domain=-1.70:1.70,
    samples=160,
    smooth,
    variable=\t
]
plot ({mycosh(\t)},{mysinh(\t)});

% Coordinates of the five points

\coordinate (u4) at ({mycosh(\Tfour)},{mysinh(\Tfour)});
\coordinate (u2) at ({mycosh(\Ttwo)},{mysinh(\Ttwo)});
\coordinate (u0) at ({mycosh(\Tzero)},{mysinh(\Tzero)});
\coordinate (u1) at ({mycosh(\Tone)},{mysinh(\Tone)});
\coordinate (u3) at ({mycosh(\Tthree)},{mysinh(\Tthree)});

% Colored displacement arcs

\draw[
    draw=edgeA,
    line width=2.5pt,
    domain=0.04:0.66,
    samples=50,
    smooth,
    variable=\t
]
plot ({1.045*mycosh(\t)},{1.045*mysinh(\t)});

\draw[
    draw=edgeB,
    line width=2.5pt,
    domain=0.85:1.36,
    samples=55,
    smooth,
    variable=\t
]
plot ({1.045*mycosh(\t)},{1.045*mysinh(\t)});

\draw[
    draw=edgeC,
    line width=2.5pt,
    domain=-0.66:-0.009,
    samples=50,
    smooth,
    variable=\t
]
plot ({1.045*mycosh(\t)},{1.045*mysinh(\t)});

\draw[
    draw=edgeD,
    line width=2.5pt,
    domain=-1.4:-0.86,
    samples=55,
    smooth,
    variable=\t
]
plot ({1.045*mycosh(\t)},{1.045*mysinh(\t)});

% Arrowheads placed separately

\draw[->,draw=edgeA,line width=2.5pt]
({1.045*mycosh(0.7)},{1.045*mysinh(0.7)})
--
({1.045*mycosh(0.84)},{1.045*mysinh(0.84)});

\draw[
    ->,
    draw=edgeB,
    line width=2.5pt
]
({1.045*mycosh(1.38)},{1.045*mysinh(1.38)})
--
({1.045*mycosh(1.44)},{1.045*mysinh(1.44)});

\draw[
    ->,
    draw=edgeC,
    line width=2.5pt
]
({1.045*mycosh(-0.7)},{1.045*mysinh(-0.7)})
--
({1.045*mycosh(-0.84)},{1.045*mysinh(-0.84)});

\draw[->,
    draw=edgeD,
    line width=2.5pt
]
({1.045*mycosh(-1.38)},{1.045*mysinh(-1.38)})
--
({1.045*mycosh(-1.44)},{1.045*mysinh(-1.44)});

% Points
\foreach \P in {u4,u2,u0,u1,u3}
    \fill (\P) circle (2.2pt);

% Vertex labels
\node[font=\large,anchor=east,xshift=-6pt,yshift=6pt] at (u3) {$u_3$};
\node[font=\large,anchor=east,xshift=-1pt,yshift=1pt] at (u1) {$u_1$};
\node[font=\large,anchor=east,xshift=-2pt,yshift=6pt] at (u0) {$u_0$};
\node[font=\large,anchor=east,xshift=-1pt,yshift=1pt] at (u2) {$u_2$};
\node[font=\large,anchor=east,xshift=-6pt,yshift=-6pt] at (u4) {$u_4$};

% Signed-coordinate labels
\node[anchor=west,xshift=9pt] at (u3)
    {$\bt_3=\lambda_{01}+\lambda_{13}$};

\node[anchor=west,xshift=9pt] at (u1)
    {$\bt_1=\lambda_{01}$};

\node[anchor=west,xshift=5pt,yshift=-6pt] at (u0)
    {$\bt_0=0$};

\node[anchor=west,xshift=9pt] at (u2)
    {$\bt_2=-\lambda_{02}$};

\node[anchor=west,xshift=9pt] at (u4)
    {$\bt_4=-(\lambda_{02}+\lambda_{24})$};

% Colored displacement labels
\node[
    text=edgeA,
    font=\large,
    anchor=west
]
at ({1.18*mycosh(0.34)},{1.18*mysinh(0.34)})
{$+\lambda_{01}$};

\node[
    text=edgeB,
    font=\large,
    anchor=west
]
at ({1.13*mycosh(1.08)},{1.13*mysinh(1.08)})
{$+\lambda_{13}$};

\node[
    text=edgeC,
    font=\large,
    anchor=west
]
at ({1.18*mycosh(-0.34)},{1.18*mysinh(-0.34)})
{$-\lambda_{02}$};

\node[
    text=edgeD,
    font=\large,
    anchor=west
]
at ({1.13*mycosh(-1.08)},{1.13*mysinh(-1.08)})
{$-\lambda_{24}$};
\end{scope}
\end{scope}
\end{tikzpicture}
\caption{Geodesic rectification of a rooted edge-weighted tree in
$\Lob(\R):=\{(x_0,x_1):x_0^2-x_1^2=1,x_0>0\} = \{\gamma(\bt):=(\ch \bt,\sh \bt):\bt \in \R\}$. \textup{(a)} A rooted tree equipped with hyperbolic edge lengths $\lambda_e=\ach(a_{ij})$ and an edge-signing
$\varepsilon_e\in\{\pm1\}$. \textup{(b)} The resulting Lobachevsky realization on $\gamma(\bt)$. Each vertex $i$ is assigned the signed coordinate $\bt_i=\sum_{e\in P(0,i)}\varepsilon_e\lambda_e$. Thus $u_i=\gamma(\bt_i)=\bigl(\ch\bt_i,\sh\bt_i\bigr)$, and every tree edge is represented by an arc of the hyperbola having the prescribed hyperbolic length and signed orientation.}
\label{fig:tree-rect-hyperbola}
\end{figure}

\section{Orthogonal innovation}

We develop a combinatorial-Lorentzian principle that couples classical representations of chordal graphs, with orthogonal innovation, to produce explicit Lorentz-Gram completions. The search for such explicit completions leads naturally to simplicial elimination and, in the general chordal setting, to the complementary clique-tree representations. Simplicial elimination builds a chordal graph by successively adjoining vertices along cliques of earlier neighbours. This combinatorial procedure has a natural geometric counterpart. Given a Lorentz-Gram realization of the earlier vertices, the prescribed inner products determine the component of each new vector in the span of its earlier-neighbour clique, and the remaining component is then placed in a fresh negative direction orthogonal to all previously constructed vectors. We call this component the \emph{orthogonal innovation}.

We begin with trees, where each nonempty earlier-neighbour clique is a singleton and simplicial elimination reduces to adjoining leaves one at a time. In this case, orthogonal innovation reveals the product-distance completion and its underlying path geometry. We then pass to chordal graphs. Here the earlier-neighbour sets are cliques rather than single vertices, while clique-trees replace the unique vertex paths of a tree by unique paths through maximal cliques. The resulting scalar path products are thereby naturally replaced by matrix-valued transfers across clique separators.

\subsection{Simplicial elimination and product-distance matrices}\label{SS:orthogonal-innovation}

We first implement the orthogonal innovation principle for trees. The
combinatorial mechanism is simplicial elimination (which will also
underlie the subsequent chordal case along with more involved clique-trees). The existence of a simplicial vertex is due to Dirac \cite{Dirac1961}, while its iterative
use appears in the work of Fulkerson and Gross \cite[Section~7]{FulkersonGross1965}. We recall the precise form needed
here.

\begin{theorem}[Simplicial elimination
\cite{Dirac1961,FulkersonGross1965}]
\label{T:chordal-seo}
A finite simple graph $G=(V,E)$ is chordal if and only if its vertices
can be labelled as $V=\{v_1,\ldots,v_n\}$ so that, for every $k$, the
vertex $v_k$ is simplicial in the subgraph induced by
$\{v_1,\ldots,v_k\}$. Equivalently, the set of earlier neighbours
$N(v_k)\cap\{v_1,\ldots,v_{k-1}\}$ is a clique for every $k$, where
$N(v):=\{w\in V:\{v,w\}\in E\}$ denotes the open neighbourhood of
$v$. Such a labelling is called a \emph{simplicial elimination
ordering}; the vertices are successively eliminated in the order
$v_n,v_{n-1},\ldots,v_1$.
\end{theorem}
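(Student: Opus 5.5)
The plan is to prove the two directions separately, using only elementary graph theory. The central object is a simplicial vertex: a vertex whose neighbourhood is a clique.

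\emph{Ordering implies chordal.} Suppose $V=\{v_1,\ldots,v_n\}$ is a simplicial elimination ordering, and suppose $G$ contains an induced cycle $Z$ of length at least four. Let $v_k$ be the vertex of $Z$ with the largest index. Its two neighbours on $Z$ both have smaller indices, so they lie in $N(v_k)\cap\{v_1,\ldots,v_{k-1}\}$. By hypothesis this set is a clique, so the two neighbours are adjacent. Since $Z$ has length at least four, these two vertices are not consecutive on $Z$. Their adjacency is therefore a chord, contradicting that $Z$ is induced. Hence $G$ is chordal.

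\emph{Chordal implies ordering.} Induced subgraphs of chordal graphs are chordal. It therefore suffices to prove Dirac's lemma: every finite chordal graph has a simplicial vertex. Given this, we choose a simplicial vertex $v_n$ of $G$ and delete it. We then apply induction to $G-v_n$ to obtain $v_1,\ldots,v_{n-1}$. The earlier neighbours of $v_n$ form all of $N(v_n)$, which is a clique, so the ordering has the required property.

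For Dirac's lemma I would prove the stronger statement: every chordal graph is either complete or has two nonadjacent simplicial vertices. The argument is by induction on $|V|$, and the complete case is trivial.

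If $G$ is not complete, pick nonadjacent vertices $a,b$ and a minimal vertex set $S$ separating $a$ from $b$. Let $A$ and $B$ be the components of $G-S$ containing $a$ and $b$, respectively.

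The key claim, and the main obstacle, is that $S$ is a clique. Take $x,y\in S$. By minimality of $S$, both $x$ and $y$ have neighbours in $A$ and in $B$. So there is a shortest path from $x$ to $y$ whose interior lies in $A$, and another whose interior lies in $B$. Joining them gives a cycle of length at least four. Because each path is shortest and there are no edges between $A$ and $B$, the only possible chord is $xy$. Chordality then forces $x$ and $y$ to be adjacent.

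Now apply the induction hypothesis to $G[A\cup S]$. If it is complete, any vertex of $A$ is simplicial in it. Otherwise it has two nonadjacent simplicial vertices, and since $S$ is a clique, at least one of them lies in $A$. In either case we obtain a vertex of $A$ that is simplicial in $G[A\cup S]$. Its neighbourhood in $G$ lies inside $A\cup S$, so it is also simplicial in $G$.

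The same argument on $B$ yields a simplicial vertex in $B$. A vertex of $A$ and a vertex of $B$ are nonadjacent, which completes the induction.

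The "equivalently" clause is immediate. Saying $v_k$ is simplicial in $G[\{v_1,\ldots,v_k\}]$ means exactly that its neighbourhood there, namely $N(v_k)\cap\{v_1,\ldots,v_{k-1}\}$, is a clique.
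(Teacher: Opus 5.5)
Your proof is correct: the easy direction via the highest-indexed vertex of an induced cycle is sound, and so is the hard direction, which uses Dirac's lemma (minimal $a$--$b$ separators in a chordal graph are cliques, so a non-complete chordal graph has two nonadjacent simplicial vertices) and then peels off simplicial vertices one at a time. The paper gives no proof of its own; it cites Dirac for the existence of a simplicial vertex and Fulkerson--Gross for its iterative use, and that is exactly the classical route you have reconstructed.
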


% For a tree, simplicial elimination reduces to adjoining leaves one at a time. In this special (chordal) case, the orthogonal innovation idea yields a distinguished completion: the product-distance matrix of Bapat and Sivasubramanian \cite{BapatSivasubramanian2012}, which extends the exponential distance matrices studied by Bapat, Lal, and Pati \cite{BapatLalPati2006}. We present this canonical completion and exhibit its hyperbolic geometry through an orthogonal innovation realization in the proof and in Algorithm~\ref{alg:tree-orthogonal-innovation}, and schematically in Figure~\ref{F:tree-orthogonal-innovation}.

For a tree, simplicial elimination reduces to adjoining leaves one at a
time. We now show that the corresponding orthogonal-innovation
construction produces exactly the product-distance completion of
Theorem~\ref{T:tree-path-product-completion}.

% \begin{theorem}[Product-distance Lobachevsky completion]
% \label{T:tree-path-product-completion}
% Let $A=(a_{ij})_{i,j\in V}$ be a partial symmetric matrix whose specification graph is a finite tree $G=(V,E)$. Assume that $a_{ii}=1$ for all $i\in V$ and $a_{ij}\geq1$ for all $\{i,j\}\in E$. For an edge $e=\{i,j\}$, write $a_e:=a_{ij}$. For $i,j\in V$, let $P(i,j)$ denote the unique path from $i$ to $j$, viewed as its set of edges, and define
% \begin{align*}
% \label{eq:canonical-tree-completion}
% b_{ij}:=\prod_{e\in P(i,j)}a_e,
% \end{align*}
% where the empty product is equal to $1$. Then $B=(b_{ij})_{i,j\in V}$ is a Lorentz-Gram completion of $A$. Moreover, $\log [B]:=(\log b_{ij})_{i,j\in V}$ is exactly the additive tree pseudometric with edge lengths $\log a_e$, and it is a metric when each $a_e>1$. In summary, the completion preserves the prescribed hyperbolic lengths $\ach(a_e)$ on the edges, while its logarithm recovers the additive tree geometry associated with the transformed edge lengths $\log a_e$.
% \end{theorem}

\begin{proof}[Proof of Theorem~\ref{T:tree-path-product-completion}]
It is immediate that $B$ agrees with $A$ on every prescribed entry. Assume first that $V$ is finite; the general case is reduced to this one at the end of the proof. We prove that $B$ is a Lorentz-Gram matrix by constructing an orthogonal innovation. Choose an arbitrary root $v_1$ and label the vertices $V=\{v_1,\ldots,v_n\}$ so that every parent precedes its children. For $k\geq2$, let $v_{\pi(k)}$ denote the parent of $v_k$; so $\pi(k)<k$. Let $H$ be a real Hilbert space of dimension at least $|V|-1$. Fix $u_\ast\in\Lob(H)$, and set $u_{v_1}:=u_\ast$. Inductively, choose a nonzero vector $e_k$ orthogonal to all previously constructed vectors and normalize it so that
$[e_k,e_k]=-1$ (Lemma~\ref{L:neg-def}). Define
\begin{equation}
\label{eq:tree-orthogonal-innovation}
u_{v_k}
:=
a_{v_{\pi(k)}v_k} u_{v_{\pi(k)}}
+
\sqrt{a_{v_{\pi(k)}v_k}^2-1}\,e_k.
\end{equation}
Since $[u_{v_{\pi(k)}},u_{v_{\pi(k)}}]=1$, $[e_k,e_k]=-1$, and $[e_k,u_{v_{\pi(k)}}]=0$, we obtain $[u_{v_k},u_{v_k}]
= 1$. Moreover, $[u_{v_k},u_{v_{\pi(k)}}]=a_{v_{\pi(k)}v_k}\geq1$. Because $u_{v_{\pi(k)}}$ lies on the positive sheet, this shows that
$u_{v_k}$ lies on the same sheet. Hence $u_{v_k}\in\Lob(H)$. Next, for every $j<k$, the orthogonality of $e_k$ gives $[u_{v_k},u_{v_j}]=a_{v_{\pi(k)}v_k}[u_{v_{\pi(k)}},u_{v_j}]$. Iterating this along the unique path from $v_k$ to $v_j$
yields
\[
[u_{v_k},u_{v_j}]
=
\prod_{e\in P(v_k,v_j)}a_e
=
b_{v_kv_j}.
\]
It follows that $B=\bigl([u_i,u_j]\bigr)_{i,j\in V}$, and hence $B$ is a Lorentz-Gram completion of $A$. It remains to identify the metric encoded by log of the entries. By definition,
\[
\log b_{ij}
=
\log\bigg[\prod_{e\in P(i,j)}a_e\bigg]
=
\sum_{e\in P(i,j)}\log a_e.
\]
Since $a_e\geq1$, $\log a_e$ are nonnegative. Hence
$\log b_{ij}$ is precisely the path pseudometric on $G$ with edge
lengths $\log a_e$. If $a_e>1$ for every $e\in E$, then all these
edge lengths are strictly positive, and therefore
$\log b_{ij}=0$ if and only if $i=j$; thus $\log[B]$ is a metric.

Finally, let $V$ be arbitrary. The entries of $B$ are defined by finite products and the identity $b_{ij}=a_{ij}$ on prescribed entries has already been verified, so it remains only to produce a Lorentz-Gram realization. Let $F\subseteq V$ be finite and nonempty, fix $i_0\in F$, and let $T_F$ be the subgraph of $G$ with edge set $\bigcup_{i\in F}P(i_0,i)$. Being a finite union of paths through $i_0$, the graph $T_F$ is connected, and it is acyclic as a subgraph of a tree; hence $T_F$ is a finite tree, and $F\subseteq V(T_F)$. Moreover $P(i,j)\subseteq P(i,i_0)\cup P(i_0,j)\subseteq T_F$ for all $i,j\in V(T_F)$, so $T_F$ carries the $G$-paths between its own vertices, and $T_F=G[V(T_F)]$ because a further edge of $G$ inside $V(T_F)$ would create a cycle. Consequently the restriction of $B$ to $V(T_F)$ is exactly the product-distance completion of the restriction of $A$ to $V(T_F)$, and is therefore a Lorentz-Gram matrix by the finite case treated above. Its principal submatrix $B_{F\times F}$ is then a Lorentz-Gram matrix as well, so it has at most one positive eigenvalue; since $b_{ii}=1$, it has exactly one. As $F$ was an arbitrary finite subset, the kernel $B$ has one positive square, and Theorem~\ref{T:LG-ker-char} provides a real Hilbert space $H$ and vectors $(u_i)_{i\in V}\subseteq\Lob(H)$ with $b_{ij}=[u_i,u_j]$ for all $i,j\in V$. Thus $B$ is a Lorentz-Gram completion of $A$ for every index set $V$.

% For infinite $V$ the assertion reduces to the finite case. Let $F\subseteq V$ be
% finite and let $T_F$ be the subtree of $G$ spanned by $F$, that is, the union of
% the paths $P(i,j)$ for $i,j\in F$; it is a finite tree whose edges are edges of
% $G$. For $i,j\in V(T_F)$ the path $P(i,j)$ lies in $T_F$, so the restriction of
% $B$ to $V(T_F)$ is precisely the product-distance completion of the restriction
% of $A$, and is therefore a Lorentz--Gram matrix by the finite case. Hence
% $B_{F\times F}$, a principal submatrix of it, is a Lorentz--Gram matrix, so it
% has one positive square. As $F$ was arbitrary and $b_{ii}=1$, the kernel $B$ has
% one positive square, and Theorem~\ref{T:LG-ker-char} yields a real Hilbert space
% $H$ and vectors $(u_i)_{i\in V}\subseteq\Lob(H)$ with $b_{ij}=[u_i,u_j]$.}

\end{proof}

\begin{remark}[Geodesic interpretation of orthogonal innovation]
The orthogonal innovation has an interpretation in terms
of hyperbolic geodesics. Indeed,
let $u\in\Lob(H)$ and let $e\in u^{\perp}$ be such that $[e,e]=-1$ (Lemma~\ref{L:neg-def}). The curve
\[
\gamma(t):=\cosh(t)u+\sinh(t)e,
\qquad t\in \R,
\]
lies in the subspace spanned by $u$ and $e$, and
satisfies $[\gamma(t),\gamma(t)]=1$. It therefore traces the
intersection of this plane with the hyperboloid, which is a
hyperbolic geodesic. Moreover, $\gamma(0)=u$ and $\gamma'(0)=e$, so
the geodesic issues from $u$ in the innovation direction $e$. Since
$[\gamma'(t),\gamma'(t)]=-1$, it is parametrized by hyperbolic arc
length. To apply this observation to
Theorem~\ref{T:tree-path-product-completion}, set
$\lambda_k:=\ach(a_{v_{\pi(k)}v_k})$. Then
$a_{v_{\pi(k)}v_k}=\cosh\lambda_k$ and
$\sqrt{a_{v_{\pi(k)}v_k}^{\,2}-1}=\sinh\lambda_k$, so the
orthogonal innovation formula becomes $u_{v_k}
=
\cosh(\lambda_k)u_{v_{\pi(k)}}
+
\sinh(\lambda_k)e_k$. Thus, $u_{v_k}$ is obtained by travelling the prescribed hyperbolic
distance $\lambda_k$ from $u_{v_{\pi(k)}}$ along the geodesic
determined by the new orthogonal direction $e_k$. This is illustrated in
Figure~\ref{F:tree-orthogonal-innovation}.
\end{remark}

\begin{figure}[htbp]
\centering
\includegraphics[width=0.65\textwidth]{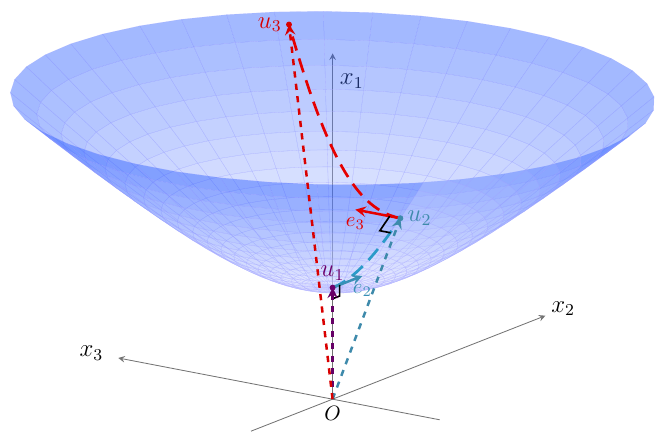}
\caption{Two orthogonal innovations in the hyperboloid
model for the path graph $v_1 - v_2 - v_3$. The geodesic from $u_1$ to $u_2$ issues in the innovation direction $e_2$, while the geodesic from $u_2$ to $u_3$ issues in the fresh direction $e_3$, which is Lorentz-orthogonal to the previously constructed vectors. The dashed rays from $O$ represent the ambient vectors $u_1,u_2,u_3$, and the dashed curved lines are Lorentzian geodesics.}
\label{F:tree-orthogonal-innovation}
\end{figure}

The orthogonal innovation also gives a direct algorithmic realization in $\Lob(\R^{n-1})$.

\begin{algorithm}[H]
\caption{Orthogonal innovation realization for a tree}
\label{alg:tree-orthogonal-innovation}
\begin{algorithmic}[1]
\Require A tree $G=(V,E)$ with $|V|=n$, with each $a_{ii}=1$, and $a_{ij}\geq1$ for $\{i,j\}\in E$
\Ensure $(u_v)_{v\in V}\subset\Lob(\R^{n-1})$ whose
Lorentz-Gram is the product-distance completion of $A$

\State Choose a root $v_1$ and re-label $V=\{v_1,\ldots,v_n\}$ so that every parent precedes its children
\State Let $v_{\pi(k)}$ be the parent of $v_k$ for $k=2,\ldots,n$
\State Let $\varepsilon_1,\ldots,\varepsilon_{n-1}$ be the standard
basis of $\R^{n-1}$
\State Set $u_{v_1}\gets(1,0)\in\R\oplus\R^{n-1}$

\For{$k=2,\ldots,n$}
%    \State $a_k\gets a_{v_{\pi(k)}v_k}$
    \State $e_k\gets(0,\varepsilon_{k-1})$
    \State $u_{v_k}\gets
    a_{v_{\pi(k)}v_k} u_{v_{\pi(k)}}+\sqrt{a_{v_{\pi(k)}v_k}^2-1}\,e_k$
\EndFor

\State \Return $(u_{v_1},\ldots,u_{v_n})$
\end{algorithmic}
\end{algorithm}

We next establish the structural properties of the product-distance
completion recorded in Proposition~\ref{P:tree-product-distance-structure}.
The determinant and inverse formulas are classical consequences of the
product-distance matrix structure, while the congruence and inertia
follow directly from successive leaf elimination. The metric comparison
then follows from the elementary bounds relating $\ach x$ and
$\log x$.

\begin{proof}[Proof of Proposition~\ref{P:tree-product-distance-structure}] The proof of $\det B$ follows from {\cite[Lemma~2.1]{BapatSivasubramanian2012}} by taking each $q_e=t_e=a_e$, and the formula for $B^{-1}$ follows from {\cite[Theorem~2.2]{BapatSivasubramanian2012}}. The final inequality is an application of $\log x\leq \ach(x) \leq \log x+\log2$ applied to $x=e^{\delta(i,j)}$. We prove the congruence and inertia in two ways, both via the matrix counterpart of the simplicial elimination in Theorem~\ref{T:chordal-seo}. For a tree, simplicial elimination is successive leaf elimination. Choose a root $v_1$ and label
$V=\{v_1,\ldots,v_n\}$ so that every parent precedes its children. Then, for each $k\geq2$, the vertex $v_k$ is a leaf of the subtree induced by $\{v_1,\ldots,v_k\}$, and its unique earlier neighbour $v_{\pi(k)}$ is its parent. Write $B_k:=B_{\{v_1,\ldots,v_k\}\times\{v_1,\ldots,v_k\}}$.

\emph{First proof (elementary row/column operations).}
For every $j<k$, the unique path from $v_k$ to $v_j$ first traverses the edge $\{v_k,v_{\pi(k)}\}$. Hence $b_{v_kv_j}=a_{v_{\pi(k)}v_k}b_{v_{\pi(k)}v_j}$. Thus, apart from its diagonal entry, the last row of $B_k$ is $a_{v_{\pi(k)}v_k}$ times the row indexed by $v_{\pi(k)}$. The single operation $R_{v_k}\longleftarrow R_{v_k}-a_{v_{\pi(k)}v_k}R_{v_{\pi(k)}}$ therefore clears every entry in the last row preceding the diagonal and changes the diagonal entry to $1-a_{v_{\pi(k)}v_k}^2$. Performing the matching column operation clears the last column and leaves $B_{k-1}$ unchanged. Consequently,
\[
B_k\sim B_{k-1}\oplus(1-a_{v_{\pi(k)}v_k}^2).
\]
Successive elimination for $k=n,n-1,\ldots,2$ gives $B\sim(1)\oplus\bigoplus_{e\in E}(1-a_e^2)$; inertia follows.

\emph{Second proof (factorization).}
Let $e_{\pi(k)}\in\R^{k-1}$ be the coordinate vector
corresponding to $v_{\pi(k)}$. The same path-product identity gives
\[
B_k
=
\begin{pmatrix}
B_{k-1} & a_{v_{\pi(k)}v_k}B_{k-1}e_{\pi(k)}\\
a_{v_{\pi(k)}v_k}e_{\pi(k)}^tB_{k-1} & 1
\end{pmatrix}.
\]
Since
$e_{\pi(k)}^tB_{k-1}e_{\pi(k)}=1$, direct multiplication yields
\[
\begin{pmatrix}
I&-a_{v_{\pi(k)}v_k}e_{\pi(k)}\\
0&1
\end{pmatrix}^{t}
B_k
\begin{pmatrix}
I&-a_{v_{\pi(k)}v_k}e_{\pi(k)}\\
0&1
\end{pmatrix}
=
B_{k-1}\oplus(1-a_{v_{\pi(k)}v_k}^2).
\]
Iterating from $B_1=(1)$ gives the required congruence and inertia.
\end{proof}

The two tree constructions serve different purposes and generally
produce different completions. The signed-path construction realizes
the data on a single geodesic and records the freedom arising from the
choices of edge directions. The product-distance construction instead
selects a root-independent completion with an explicit path-product
formula and, in the nonsingular case, a sparse inverse. It is this
second construction that extends naturally to chordal graphs through much involved clique-tree representations and simplicial eliminations.

\subsection{Clique-trees and matrix-valued transfers}\label{SS:clique-tree-completion}

The tree case in Theorem~\ref{T:tree-path-product-completion} is
pleasantly explicit for two reasons: $(a)$ every pair of vertices is joined by a unique path, and $(b)$ each new vertex is adjoined along a single earlier neighbour (via the simplicial elimination). The completed entries are therefore path products, and each new row is obtained from its parent row by scalar multiplication. For a chordal graph, simplicial elimination remains available, but vertex paths need not be unique. The appropriate replacement is a clique-tree, which organizes the maximal cliques of the given chordal specification graph $G$ along unique paths and records how they meet through clique separators. We use the classical clique-tree characterization of chordal graphs, following Blair and Peyton \cite[Section~3, Theorems~3.2--3.5]{BlairPeyton1993}; see also Gavril \cite{Gavril1974} and Buneman \cite{Buneman1974Rigid} for the original characterizations. The clique-tree separator property is discussed in \cite[Section~4]{BlairPeyton1993}.

\begin{theorem}[Clique-trees {\cite{BlairPeyton1993}}]
\label{T:clique-trees}
Let $G=(V,E)$ be a finite connected graph, and let $\ctv(G)$ denote the family of all its maximal cliques. Then the following are equivalent.
\begin{enumerate}
\item The graph $G$ is chordal.

\item \emph{(Induced subtree property.)} There exists a tree $\ct$ with vertex set $\ctv(G)$ whose edges are chosen so that, for every $v\in V$, the maximal cliques in $\ctv(G)$ containing $v$ induce a connected subtree of $\ct$. Such a tree is called a \emph{clique-tree} of $G$.

\item \emph{(Running intersection property.)} The maximal
cliques in $\ctv(G)$ can be ordered as $C_1,\ldots,C_m$ so that, for every $r\geq2$, there exists $\pi(r)<r$ satisfying
$C_r\cap(C_1\cup\cdots\cup C_{r-1})\subseteq C_{\pi(r)}$.
Joining each $C_r$ to its parent $C_{\pi(r)}$ then produces a clique
tree.
\item \emph{(Clique intersection property.)} A tree $\ct$ on $\ctv(G)$ is a clique-tree if and only if, for any two maximal cliques $C,D$, every clique on the unique path from $C$ to $D$ in $\ct$ contains $C\cap D$.
\end{enumerate}

Furthermore, if $CD$ is an edge of $\ct$ with clique-tree separator $S:=C\cap D$, then deleting $CD$ separates $\ct$ into two components $\ct_1$ and $\ct_2$. Setting $V_r:=\bigcup_{K\in V(\ct_r)}K$ for $r=1,2$, one has $V_1\cap V_2=S$, and there are no edges of $G$ between $V_1\setminus S$ and $V_2\setminus S$.
\end{theorem}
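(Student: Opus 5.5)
We prove the equivalences by the cycle $(1)\Rightarrow(2)\Rightarrow(4)\Rightarrow(2)\Rightarrow(3)\Rightarrow(2)\Rightarrow(1)$ and then derive the separator property from the induced subtree property. Throughout we use Theorem~\ref{T:chordal-seo} to pass between chordality and simplicial vertices.

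\emph{$(1)\Rightarrow(2)$: induction on $|V|$.} If $|V|=1$ there is nothing to prove. Otherwise, by Theorem~\ref{T:chordal-seo}, $G$ has a simplicial vertex $v$; take $v=v_n$ in a simplicial elimination ordering. Then $G-v$ is chordal and still connected, since any path through $v$ can be shortcut inside the clique $N(v)$. By induction, $G-v$ has a clique-tree $\ct'$. Set $C:=N(v)\cup\{v\}$, which is a maximal clique of $G$ and the only maximal clique containing $v$. There are two cases.
\begin{enumerate}
\item If $N(v)$ is a maximal clique of $G-v$, replace that node of $\ct'$ by $C$.
\item Otherwise choose a maximal clique $D\supseteq N(v)$ of $G-v$ and attach $C$ to $D$ as a new leaf.
\end{enumerate}
In either case the maximal cliques of $G$ are exactly the resulting nodes. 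The subtree for $v$ is the single node $C$. For $w\neq v$, the subtree is unchanged, except that $C$ is added when $w\in N(v)$; in that case $C$ is adjacent to $D\ni w$ or replaces a node containing $w$. So connectivity is preserved. We expect this bookkeeping of which maximal cliques survive to be the main (though routine) obstacle.

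\emph{$(2)\Leftrightarrow(4)$.} If $w\in C\cap D$, the connected subtree of cliques containing $w$ contains $C$ and $D$, hence the whole $\ct$-path between them. Conversely, under $(4)$, the cliques containing a fixed $w$ are pairwise joined by paths consisting of cliques that contain $w$, so they form a connected subtree.

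\emph{$(2)\Rightarrow(3)$.} Root $\ct$ and order the cliques $C_1,\ldots,C_m$ so that parents precede children; let $C_{\pi(r)}$ be the parent of $C_r$. If $w\in C_r\cap C_s$ with $s<r$, then the $\ct$-path from $C_r$ to $C_s$ passes through $C_{\pi(r)}$, and $(4)$ gives $w\in C_{\pi(r)}$.

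\emph{$(3)\Rightarrow(2)$.} Join each $C_r$ to $C_{\pi(r)}$. For a vertex $w$, let $C_s$ be the first clique containing $w$. Any later $C_r\ni w$ satisfies $w\in C_{\pi(r)}$, so following parent links from $C_r$ stays inside cliques containing $w$ and must terminate at $C_s$. Hence the cliques containing $w$ form a connected subtree.

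\emph{$(2)\Rightarrow(1)$.} We show that a clique-tree forces a simplicial elimination ordering. Take a leaf $C$ of $\ct$ with neighbour $D$; if $\ct$ has one node, $G$ is complete. Since $C$ is maximal, $C\setminus D\neq\varnothing$. For any $v\in C\setminus D$, the subtree of cliques containing $v$ is connected, contains $C$, and avoids $D$, so it is just $\{C\}$. Thus $N(v)\subseteq C$, and $v$ is simplicial. Now delete $v$. If $C\setminus\{v\}\subseteq D$, drop $C$ from the tree; otherwise keep $C\setminus\{v\}$ as a node. Either way we obtain a clique-tree of the connected graph $G-v$. Induction then yields a simplicial elimination ordering, and Theorem~\ref{T:chordal-seo} gives chordality.

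\emph{Separator property.} Let $w\in V_1\cap V_2$. The connected subtree for $w$ meets both components $\ct_1$ and $\ct_2$, so it uses the edge $CD$, and hence $w\in C\cap D=S$. The reverse inclusion $S\subseteq V_1\cap V_2$ is clear. Finally, suppose $\{x,y\}\in E$ with $x\in V_1\setminus S$ and $y\in V_2\setminus S$. The edge lies in some maximal clique $K$, which belongs to one component, say $\ct_1$. Then $y\in V_1\cap V_2=S$, a contradiction; the case $K\in\ct_2$ is symmetric.
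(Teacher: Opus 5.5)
The paper gives no proof of this theorem. It imports it from Blair and Peyton \cite{BlairPeyton1993}, with Gavril and Buneman as the original sources, so there is no internal argument to match. Your proposal is a correct, self-contained proof along the standard lines:
\begin{itemize}
\item simplicial-vertex induction gives existence of a clique-tree;
\item for the converse, a vertex of a leaf clique $C$ lying outside its neighbour $D$ is simplicial;
\item a parent-first rooting of the tree gives the running intersection property;
\item the induced subtree property gives the separator statement.
\end{itemize}
Your only external input is Theorem~\ref{T:chordal-seo}, which the paper also takes from the literature, so there is no circularity. Your route has the advantage that the facts the paper actually uses later are proved in its own language: the induced subtree property, the parent-first ordering in Step~1 of the proof of Theorem~\ref{T:clique-product-completion}, and the separator property in Step~2. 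The citation buys brevity and access to further Blair--Peyton theory, such as invariance of the separator multiset used in Theorem~\ref{T:separator-size-distortion-dichotomy}. That invariance is not part of this statement, and your argument does not address it.

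One step should be made explicit. In $(2)\Rightarrow(1)$, when $C\setminus\{v\}\not\subseteq D$ you keep $C\setminus\{v\}$ as a node. This requires $C\setminus\{v\}$ to be a maximal clique of $G-v$, that is, not contained in any other $K\in\ctv(G)$. It holds because the $\mathcal T$-path from the leaf $C$ to any $K\neq C$ passes through $D$, so the clique intersection property gives $C\cap K\subseteq D$. Add the remark that every $K\neq C$ omits $v$ and hence stays maximal in $G-v$. Together these show that the new node set is exactly $\ctv(G-v)$ in both of your cases.

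The bookkeeping you flagged in $(1)\Rightarrow(2)$ is likewise one line. A maximal clique $K$ of $G-v$ fails to be maximal in $G$ only if $K\subseteq N(v)$, which forces $K=N(v)$. This settles both of your cases at once.
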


Blair and Peyton give a particularly clear and systematic account of
this structure through the induced subtree, running intersection, and clique intersection properties
{\cite[Theorems~3.2--3.5, Corollary~3.7]{BlairPeyton1993}}. Their treatment of clique separators further shows how the edges of a clique-tree encode the separation structure of the underlying chordal graph
{\cite[Section~4.1, Lemmas~4.1--4.2, Theorem~4.3]{BlairPeyton1993}}. The separator property makes the analogy with the tree case precise:
clique-tree edges carry clique separators, and passage across each
separator is governed by a matrix-valued transfer. We now show that
composing these transfers along clique-tree paths produces exactly the
completion in Theorem~\ref{T:clique-product-completion}.

\begin{proof}[Proof of Theorem~\ref{T:clique-product-completion}]
Since $A_{C\times C}$ is a nonsingular Lorentz-Gram matrix for every maximal clique $C$, Lemma~\ref{lem:sign-inn} shows that $A_{Q\times Q}$ is a nonsingular Lorentz-Gram matrix for every clique $Q$ of $G$; we use this repeatedly.

We construct the completion by rooting the clique-tree and attaching one maximal clique at a time. At each attachment, the new cross-entries are determined by transfer through the corresponding clique separator. We then verify that the resulting matrix has inertia $(1,|V|-1,0)$ and finally recover the clique-product formula.

\medskip
\noindent
\textit{\textbf{Step 1:} Decomposing the vertex set along the rooted clique-tree.}\medskip

Choose a maximal clique $C_1$ as the root of $\ct$, and order the
maximal cliques (or the vertices of $\ct$) as $C_1,\ldots,C_m$ so that every parent precedes its
children. For $r\geq2$, let $C_{\pi(r)}$ be the parent of $C_r$
and set
\[
S_r:=C_r\cap C_{\pi(r)},\qquad
R_1:=C_1,\qquad
R_r:=C_r\setminus S_r\quad (r\geq2).
\]
Then the residual sets $R_1,\ldots,R_m$ form a partition of $V$. Indeed, for each $v\in V$, the maximal cliques containing $v$
induce a connected subtree of $\ct$, by
Theorem~\ref{T:clique-trees}. Let $C_r$ be the member of this subtree
nearest to the root $C_
1$. If $r=1$, then $v\in R_1$. If $r\geq2$,
the parent $C_{\pi(r)}$ does not contain $v$, by the choice of
$C_r$, and hence $v\in R_r$. Every other clique $C_s$ containing
$v$ lies below $C_r$; its parent also contains $v$, since the
cliques containing $v$ form a connected subtree. Therefore
$v\notin R_s$. Thus every vertex belongs to exactly one $R_r$. Now, set
\[
W_r:=R_1\sqcup\cdots\sqcup R_r.
\]
The set $W_r$ is the collection of vertices introduced after the
first $r$ cliques have been processed. In particular, we have $W_m=V$.

\medskip
\noindent\emph{\textbf{Step 2:} Attaching a clique through its separator.}\medskip

We construct the principal submatrices $B_{W_r\times W_r}$
inductively for $r=1,\dots,m$. Set $B_{W_1\times W_1}:=A_{C_1\times C_1}$, and
suppose that $B_{W_{r-1}\times W_{r-1}}$ has already been
constructed. We have $C_r=S_r\sqcup R_r$, i.e., the maximal clique $C_r$ consists of the separator $S_r$ and the new residual vertices in $R_r$. Moreover, $S_r\neq\varnothing$.
Indeed, if $S_r=\varnothing$, then deleting the clique-tree edge $C_{\pi(r)}C_r$ in $\ct$ would, by the separator property in Theorem~\ref{T:clique-trees}, partition $V$ into two nonempty sets with no edges of $G$ between them, contradicting the connectedness of $G$. Finally,
$A_{S_r\times S_r}$ is nonsingular by hypothesis. 

Notice that deleting the edge $C_{\pi(r)}C_r$ in $\ct$ separates $C_r$ and its descendants in $\ct$ from the previously processed cliques. Indeed, denote the two components of the resulting forest by $\ct_1$ and $\ct_2$, where
$C_{\pi(r)}\in V(\ct_1)$ and $C_r\in V(\ct_2)$, and let
\[
V_\ell:=\bigcup_{C\in V(T_\ell)}C,\qquad \ell=1,2.
\]
By Theorem~\ref{T:clique-trees}, $V_1\cap V_2=S_r$, and there are
no edges of $G$ between $V_1\setminus S_r$ and
$V_2\setminus S_r$. Now,
$R_r=C_r\setminus S_r\subseteq V_2\setminus S_r$. Note, $\ct_2$ consists of $C_r$ and its descendants in $\ct$, and since every parent precedes its children in the chosen ordering, every previously processed clique belongs to $\ct_1$. Consequently,
$W_{r-1}\subseteq V_1$, and hence
$W_{r-1}\setminus S_r\subseteq V_1\setminus S_r$. It follows that
there are no edges of $G$ between $R_r$ and
$W_{r-1}\setminus S_r$. So, the entries in the block $R_r\times (W_{r-1}\setminus S_r)$ block are unspecified. Thus, relative to the decomposition $W_r=(W_{r-1}\setminus S_r)\sqcup S_r\sqcup R_r$, the matrix at the $r$-th attachment has the block decomposition as in Figure~\ref{F:clique-attachment-blocks}.
\begin{figure}[H]
\centering
\[
\renewcommand{\arraystretch}{1.2}
\begin{array}{c@{\qquad}c}
&
\begin{array}{@{}c@{}c@{}c@{}}
\makebox[-1cm][c]{\mbox{$W_{r-1}\setminus S_r$}}
&
\makebox[6cm][c]{\mbox{$S_r$}}
&
\makebox[0cm][c]{\mbox{$R_r$}}
\end{array}
\\
\begin{array}{c}
W_{r-1}\setminus S_r\\
S_r\\
R_r
\end{array}
&
\left(
\begin{array}{@{}c@{}|c@{}|c@{}}
\makebox[4cm][c]{\mbox{constructed}}
&
\makebox[2.4cm][c]{\mbox{constructed}}
&
\makebox[2.4cm][c]{\mbox{unspecified}}
\\ \hline
\makebox[4cm][c]{\mbox{constructed}}
&
\makebox[2.4cm][c]{\mbox{given}}
&
\makebox[2.4cm][c]{\mbox{given}}
\\ \hline
\makebox[4cm][c]{\mbox{unspecified}}
&
\makebox[2.4cm][c]{\mbox{given}}
&
\makebox[2.4cm][c]{\mbox{given}}
\end{array}
\right)=B_{_{W_r\times W_r}}.
\end{array}
\]
\caption{Block structure of the $r$-th clique attachment.}
\label{F:clique-attachment-blocks}
\end{figure}
We complete the two unspecified cross-blocks as follows. Define $B_{W_r\times W_r}$, relative to $W_r=W_{r-1}\sqcup R_r$, by retaining the previously constructed principal block $B_{W_{r-1}\times W_{r-1}}$, and setting
\[
B_{R_r\times W_{r-1}}
:=
A_{R_r\times S_r}A_{S_r\times S_r}^{-1}
B_{S_r\times W_{r-1}},
\qquad
B_{W_{r-1}\times R_r}
:=
B_{R_r\times W_{r-1}}^{\,t},
\]
and $B_{R_r\times R_r}:=A_{R_r\times R_r}$.
Equivalently,
\[
B_{W_r\times W_r}
:=
\begin{pmatrix}
B_{W_{r-1}\times W_{r-1}}
&
B_{W_{r-1}\times R_r}
\\
B_{R_r\times W_{r-1}}
&
A_{R_r\times R_r}
\end{pmatrix}.
\]
This definition preserves the prescribed entries. Indeed,
$B_{S_r\times S_r}=A_{S_r\times S_r}$, because $S_r$ is
contained in a previously processed clique. Next, we use the standard restriction rule for block matrix products \cite{ZhangSchur}: if matrix $X$ is indexed by $I\times K$, matrix $Y$ is indexed by $K\times J$, and $J'\subseteq J$, then $(XY)_{I\times J'}=X\,Y_{K\times J'}$. Hence, restricting the identity $B_{R_r\times W_{r-1}}
=
A_{R_r\times S_r}A_{S_r\times S_r}^{-1}
B_{S_r\times W_{r-1}}$ to the columns indexed by $S_r$, and then using
$A_{S_r\times S_r}^{-1}B_{S_r\times S_r}=A_{S_r\times S_r}^{-1}A_{S_r\times S_r}=I$, gives
$B_{R_r\times S_r} = A_{R_r\times S_r}$. Moreover,
$B_{R_r\times R_r}=A_{R_r\times R_r}$ by definition, while
there are no prescribed entries between $R_r$ and
$W_{r-1}\setminus S_r$. Thus every attachment preserves all
entries specified by $A$. By symmetry, the transfer rule also agrees on $S_r\times R_r$. Therefore each attachment preserves all entries specified by $A$, and after the final attachment the resulting symmetric matrix $B$ is a completion of $A$. We show that it is a Lorentz-Gram completion next.

\medskip
\noindent\emph{\textbf{Step 3:} Inertia and Lorentz-Gram realization.} \medskip

We prove by induction on $r$ that $\inn\bigl(B_{W_r\times W_r}\bigr)
=(1,|W_r|-1,0)$. For $r=1$, this follows from
$B_{W_1\times W_1}=A_{C_1\times C_1}$, which is a nonsingular
Lorentz-Gram matrix by hypothesis. Now let $r\geq2$, and suppose $B_{W_{r-1}\times W_{r-1}}$ has inertia
$(1,|W_{r-1}|-1,0)$; in particular, it is nonsingular. By Step~2, relative to $W_r=W_{r-1}\sqcup R_r$,
\[
B_{W_r\times W_r}
=
\begin{pmatrix}
B_{W_{r-1}\times W_{r-1}}
&
B_{W_{r-1}\times S_r}A_{S_r\times S_r}^{-1}
A_{S_r\times R_r}
\\[1mm]
A_{R_r\times S_r}A_{S_r\times S_r}^{-1}
B_{S_r\times W_{r-1}}
&
A_{R_r\times R_r}
\end{pmatrix}.
\]
We compute the Schur complement of
$B_{W_{r-1}\times W_{r-1}}$ in this matrix. First, since
$B_{S_r\times W_{r-1}}$ is the row block of
$B_{W_{r-1}\times W_{r-1}}$ indexed by $S_r$, the product $B_{S_r\times W_{r-1}}B_{W_{r-1}\times W_{r-1}}^{-1}$ is the corresponding row block of the identity matrix. Therefore,
on multiplying by the column block
$B_{W_{r-1}\times S_r}$, we obtain
\[
B_{S_r\times W_{r-1}}
B_{W_{r-1}\times W_{r-1}}^{-1}
B_{W_{r-1}\times S_r}
=
B_{S_r\times S_r}
=
A_{S_r\times S_r},
\]
where the last equality was established in Step~2. It follows that
the Schur complement is
\[
D_r
:=
A_{R_r\times R_r}
-
A_{R_r\times S_r}A_{S_r\times S_r}^{-1}
A_{S_r\times R_r}.
\]
The same matrix $D_r$ is the Schur complement of
$A_{S_r\times S_r}$ in the fully specified clique matrix
\[
A_{C_r\times C_r}
=
\begin{pmatrix}
A_{S_r\times S_r} & A_{S_r\times R_r}\\
A_{R_r\times S_r} & A_{R_r\times R_r}
\end{pmatrix},
\qquad C_r=S_r\sqcup R_r.
\]
By the inertia additivity formula for Schur complements \cite{Haynsworth1968}, we have
\[
\inn(A_{C_r\times C_r})
=
\inn(A_{S_r\times S_r})
+
\inn(D_r).
\]
Both $A_{C_r\times C_r}$ and $A_{S_r\times S_r}$ are
nonsingular Lorentz-Gram matrices. Their inertias are therefore
$(1,|C_r|-1,0)$ and $(1,|S_r|-1,0)$, respectively. Since
$|C_r|=|S_r|+|R_r|$, it follows that $\inn(D_r)=(0,|R_r|,0)$; thus $D_r$ is negative definite. Applying the same inertia additivity formula to the block matrix
$B_{W_r\times W_r}$ constructed in Step~2 gives
\[
\begin{aligned}
\inn\bigl(B_{W_r\times W_r}\bigr)
&=
\inn\bigl(B_{W_{r-1}\times W_{r-1}}\bigr)
+\inn(D_r)\\
&=
(1,|W_{r-1}|-1,0)+(0,|R_r|,0)=
(1,|W_r|-1,0),
\end{aligned}
\]
because $W_r=W_{r-1}\sqcup R_r$. This completes the induction. Since $W_m=V$, we obtain $\inn(B)=(1,|V|-1,0)$. In particular, $B$ is nonsingular. Therefore, by Sylvester's law of inertia, there exists an invertible matrix
$X$ such that
\[
B=X^t
\begin{pmatrix}
1&0\\
0&-I_{|V|-1}
\end{pmatrix}
X.
\]
Regarding the columns of $X$ as vectors $u_i$ in the
corresponding Lorentz space, we have $b_{ij}=[u_i,u_j]$ for all
$i,j\in V$. Since $b_{ii}=1$, every $u_i$ is a unit timelike
vector and therefore lies on one of the two sheets of the unit
hyperboloid. If $\{i,j\}\in E$, then
$[u_i,u_j]=b_{ij}=a_{ij}\geq1$, so $u_i$ and $u_j$ lie on the
same sheet. Since $G$ is connected, following paths in $G$
shows that all the vectors $u_i$ lie on the same sheet. Replacing
all of them by their negatives, if necessary, we may assume that
$u_i\in\Lob(\R^{|V|-1})$ for every $i\in V$. Hence $B$
is a nonsingular Lorentz-Gram completion of $A$.

\medskip
\noindent\emph{\textbf{Step 4:} Recovering the clique-product formula.}\medskip

Retain the clique-tree $\ct$, its root, and the ordering
$C_1,\ldots,C_m$ used in previous steps. We prove by induction along the same sequence of clique attachments, that the entries of the matrix $B$ constructed in Step~2 are given by the clique-product formula stated in the theorem. We first introduce notation. Let $D$ and $E$ be maximal cliques of $G$, and let $i\in D$ and $j\in E$. If $D\neq E$ and $D=D_0,D_1,\ldots,D_\ell=E$ is the path from $D$ to $E$ in $\ct$, with $T_k:=D_{k-1}\cap D_k$, we define
\begin{align}\label{eq:5.1}
\Phi_{D,E}(i,j):=
A_{\{i\}\times T_1}A_{T_1\times T_1}^{-1}
A_{T_1\times T_2}A_{T_2\times T_2}^{-1}
\cdots
A_{T_{\ell-1}\times T_\ell}A_{T_\ell\times T_\ell}^{-1}
A_{T_\ell\times\{j\}}.
\end{align}
When $D=E$, we define $\Phi_{D,D}(i,j):=a_{ij}$. This expression is well defined: every $T_k$ is a clique, and
$T_{k-1}\cup T_k\subseteq D_{k-1}$, so all the displayed blocks
are prescribed and every separator matrix $A_{T_k\times T_k}$ is nonsingular.

We proceed by induction on the attachment index
$r=1,\ldots,m$. Recall that, after the $r$-th attachment,
the processed vertex set is $W_r=R_1\sqcup\cdots\sqcup R_r$, and Step~2 has constructed $B_{W_r\times W_r}$. We prove that, at each stage $r$, the following assertion holds: for every $i,j\in W_r$, and every pair of (processed) maximal cliques $D,E\in\{C_1,\ldots,C_r\}$ satisfying $i\in D$ and $j\in E$, one has
\[
\Phi_{D,E}(i,j)=b_{ij}.
\]

We begin with the $r=1$ case. Here, the only processed clique is $C_1$, and $B_{W_1\times W_1}=A_{C_1\times C_1}$. Thus $\Phi_{C_1,C_1}(i,j)=a_{ij}=b_{ij}$, proving the assertion. Now let $r\geq2$, and suppose that the assertion holds for the matrix $B_{W_{r-1}\times W_{r-1}}$ constructed at the end of the $(r-1)$st attachment in Step~2. Recall that $C_r$ is attached to its parent $C_{\pi(r)}$ through the separator $S_r=C_r\cap C_{\pi(r)}$, and that $W_r=W_{r-1}\sqcup R_r$.

If both endpoint cliques $D$ and $E$ belong to
$\{C_1,\ldots,C_{r-1}\}$, then \eqref{eq:5.1} follows immediately from the
induction hypothesis. If $D=E=C_r$, then
$\Phi_{C_r,C_r}(i,j)=a_{ij}=b_{ij}$, because Step~2 shows that
$B$ agrees with $A$ on $C_r\times C_r$. It remains to consider the case in which exactly one endpoint clique
is $C_r$. By symmetry, suppose that $D=C_r$ and
$E\in\{C_1,\ldots,C_{r-1}\}$. Since every parent precedes its
children, no previously processed clique is a descendant of $C_r$.
Consequently, the path from $C_r$ to $E$ in $\ct$ begins with
$C_rC_{\pi(r)}$ and its first separator is $S_r=C_r\cap C_{\pi(r)}$. Write this path as
\[
C_r=D_0,\qquad D_1=C_{\pi(r)},\qquad
D_2,\ldots,D_\ell=E,
\]
and put $T_k:=D_{k-1}\cap D_k$. Thus $T_1=S_r$.

We now apply the induction hypothesis to the part of this path from
$C_{\pi(r)}$ to $E$. Fix $s\in S_r$, and apply the induction hypothesis with
\[
x=s,\qquad y=j,\qquad D=C_{\pi(r)},\qquad F=E.
\]
These choices satisfy all its assumptions: $s\in S_r\subseteq C_{\pi(r)}$, $j\in E$, and both
$C_{\pi(r)}$ and $E$ were processed before $C_r$. Consequently,
$s,j\in W_{r-1}$, and hence
\begin{align}\label{eq:5.2}
\Phi_{C_{\pi(r)},E}(s,j)=b_{sj}
\qquad\text{for every }s\in S_r.
\end{align}
After the first edge $C_rC_{\pi(r)}$ is removed from the path, the
remaining path is precisely the path from $C_{\pi(r)}$ to $E$.

More explicitly, when $\ell\geq2$, the formula for
$\Phi_{C_{\pi(r)},E}(s,j)$ begins with the row
$A_{\{s\}\times T_2}$, while all subsequent factors are independent
of $s$. Stacking these rows over $s\in S_r$ therefore gives $\bigl(\Phi_{C_{\pi(r)},E}(s,j)\bigr)_{s\in S_r}=A_{S_r\times T_2}A_{T_2\times T_2}^{-1}\cdots A_{T_\ell\times T_\ell}^{-1}A_{T_\ell\times\{j\}}$. This is precisely the column remaining in the product defining
$\Phi_{C_r,E}(i,j)$ after its first two factors,
$A_{\{i\}\times S_r}A_{S_r\times S_r}^{-1}$, have been separated.
If $\ell=1$, then $E=C_{\pi(r)}$, and the same conclusion holds
because $\bigl(\Phi_{C_{\pi(r)},E}(s,j)\bigr)_{s\in S_r}=(a_{sj})_{s\in S_r}=A_{S_r\times\{j\}}$. Thus, in both cases, we obtain the following, that is, separating the first two factors in the definition of
$\Phi_{C_r,E}(i,j)$ gives
\begin{align}\label{eq:5.3}
\Phi_{C_r,E}(i,j)
=
A_{\{i\}\times S_r}A_{S_r\times S_r}^{-1}
\bigl(\Phi_{C_{\pi(r)},E}(s,j)\bigr)_{s\in S_r}.
\end{align}
By \eqref{eq:5.2}, the column appearing on the right is
\[
\bigl(\Phi_{C_{\pi(r)},E}(s,j)\bigr)_{s\in S_r}
=
(b_{sj})_{s\in S_r}
=
B_{S_r\times\{j\}}.
\]
It follows that
\[
\Phi_{C_r,E}(i,j)
=
A_{\{i\}\times S_r}A_{S_r\times S_r}^{-1}
B_{S_r\times\{j\}}.
\]
There are now two possibilities. If $i\in R_r$, then $i$ is a newly introduced vertex, whereas $j\in E\subseteq W_{r-1}$ is an already processed vertex. Step~2 defined the entire block between these two sets by $B_{R_r\times W_{r-1}}:=A_{R_r\times S_r}A_{S_r\times S_r}^{-1}B_{S_r\times W_{r-1}}$. Taking its row indexed by $i$ and column indexed by $j$ gives $b_{ij}=A_{\{i\}\times S_r}A_{S_r\times S_r}^{-1}B_{S_r\times\{j\}}=\Phi_{C_r,E}(i,j)$, as required. If instead $i\in S_r$, since $A_{\{i\}\times S_r}$ is the row of $A_{S_r\times S_r}$ indexed by $i$, the product $A_{\{i\}\times S_r}A_{S_r\times S_r}^{-1}$ is the corresponding row of the identity matrix. Therefore,
\[
\Phi_{C_r,E}(i,j)=A_{\{i\}\times S_r}A_{S_r\times S_r}^{-1}
B_{S_r\times\{j\}} = I_{\{i\}\times S_r} B_{S_r \times \{j\}}
=b_{ij}.
\]
This proves \eqref{eq:5.1} at the $r$-th attachment and completes the induction. Finally, $W_m=V$, and every maximal clique of $G$ has been
processed. Hence, for any $i,j\in V$ and any maximal cliques
$C_0$ and $C_m$ containing $i$ and $j$, respectively,
the assertion gives
\[
b_{ij}
=
A_{\{i\}\times S_1}A_{S_1\times S_1}^{-1}
A_{S_1\times S_2}A_{S_2\times S_2}^{-1}
\cdots
A_{S_{m-1}\times S_m}A_{S_m\times S_m}^{-1}
A_{S_m\times\{j\}},
\]
where $C_0,C_1,\ldots,C_m$ is the path between the chosen endpoint
cliques and $S_r=C_{r-1}\cap C_r$. The matrix $B$ was constructed in Step~2 without choosing the initial and final cliques. The induction above shows
that every admissible choice of these cliques yields a path product
equal to the same fixed entry $b_{ij}$. Thus, for the fixed
clique-tree, the clique-product formula is independent of the choice
of the initial and final cliques. This completes the proof.
\end{proof}

\subsection{Inverse sparsity, canonicality, and realizations}\label{SS:chordal-canonicity}

The formula in Theorem~\ref{T:clique-product-completion} was
obtained from a chosen clique-tree. The resulting completion is, however, intrinsic. It is characterized by the sparsity pattern of its inverse and can be realized directly by orthogonal innovation along any simplicial elimination ordering.

\begin{proof}[Proof of Theorem~\ref{T:chordal-canonicity}]
We prove the assertions in a few parts.

\medskip
\noindent\emph{\textbf{1.} The Johnson--Lundquist inverse formula for $B=B_{\ct}$.} \medskip

For $I\subseteq V$ and a matrix $X$ indexed by $I$, let
$\iota_I(X)$ denote its extension to a $V\times V$ matrix by
zeros outside $I\times I$. We first show directly that $B$
satisfies the local inverse formula of Johnson and Lundquist
\cite{Johnson-Lundquist}. Retain the rooted clique-tree and the notation from the proof of Theorem~\ref{T:clique-product-completion}. At the $r$-th attachment, put $S:=S_r$, $R:=R_r$, and $M:=B_{W_{r-1}\times W_{r-1}}$. The Schur complement appearing in the attachment is given by the following:
\[
D_r
:=
A_{R\times R}
-
A_{R\times S}A_{S\times S}^{-1}A_{S\times R}.
\]
This is invertible by Step~3 of the proof of Theorem~\ref{T:clique-product-completion}. Applying the block inverse formula \eqref{eq:block-inverse} both to the attached matrix $B_{W_r\times W_r}$ and to the clique matrix
$A_{C_r\times C_r}$, where $C_r=S_r\sqcup R_r$, we obtain the following Johnson--Lundquist inverse formula:
\begin{align}\label{eq:john-lund}
\iota_{W_r}\!
 \left(B_{W_r\times W_r}^{-1}\right)
=
\iota_{W_{r-1}}\!
 \left(B_{W_{r-1}\times W_{r-1}}^{-1}\right)
+
\iota_{C_r}\!\left(A_{C_r\times C_r}^{-1}\right)
-
\iota_{S_r}\!\left(A_{S_r\times S_r}^{-1}\right).
\end{align}
To show this, define a matrix $Q$, with rows indexed by $W_{r-1}$ and columns indexed by $R$:
\[
Q_{S\times R}:=A_{S\times S}^{-1}A_{S\times R},
\qquad
Q_{(W_{r-1}\setminus S)\times R}:=0.
\]
Since $B_{W_{r-1}\times S}$ consists of the columns of $M$
indexed by $S$, the attachment rule in Step 2 of the proof of Theorem~\ref{T:clique-product-completion} gives $B_{W_{r-1}\times R}=MQ$. Indeed, since $Q$ is zero outside its rows indexed by $S$, only the columns of $M$ indexed by $S$ contribute to $MQ$. Explicitly, putting $U:=W_{r-1}\setminus S$, we have
\[
\begin{aligned}
MQ
&=
\begin{pmatrix}
M_{W_{r-1}\times U} & M_{W_{r-1}\times S}
\end{pmatrix}
\begin{pmatrix}
0\\
A_{S\times S}^{-1}A_{S\times R}
\end{pmatrix}\\
&=
M_{W_{r-1}\times S}A_{S\times S}^{-1}A_{S\times R}
=
B_{W_{r-1}\times S}A_{S\times S}^{-1}A_{S\times R}
=
B_{W_{r-1}\times R}.
\end{aligned}
\]
Therefore, relative to $W_r=W_{r-1}\sqcup R$, we have
\[
\begin{aligned}
\begin{pmatrix}
M & MQ\\
Q^{t}M & A_{R\times R}
\end{pmatrix}
&=
\begin{pmatrix}
B_{W_{r-1}\times W_{r-1}}
&
B_{W_{r-1}\times S}A_{S\times S}^{-1}A_{S\times R}
\\[2mm]
A_{R\times S}A_{S\times S}^{-1}B_{S\times W_{r-1}}
&
A_{R\times R}
\end{pmatrix}                                                    \\
&=
\begin{pmatrix}
B_{W_{r-1}\times W_{r-1}} & B_{W_{r-1}\times R}\\
B_{R\times W_{r-1}}       & B_{R\times R}
\end{pmatrix}
=
B_{W_r\times W_r}.
\end{aligned}
\]
Recall from the proof of Theorem~\ref{T:clique-product-completion} that the Schur complement of $M$ in the above is $D_r$. In what follows, we use the following block inverse formula
{\cite{HornJohnson}}. If $X$ and its Schur
complement ${\mathcal S}:=Z-YX^{-1}Y'$ are invertible, then
\begin{align}\label{eq:block-inverse}
\begin{pmatrix}
X & Y'\\
Y & Z
\end{pmatrix}^{-1}
=
\begin{pmatrix}
X^{-1}+X^{-1}Y'{\mathcal S}^{-1}YX^{-1}
&
-X^{-1}Y'{\mathcal S}^{-1}
\\
-{\mathcal S}^{-1}YX^{-1}
&
{\mathcal S}^{-1}
\end{pmatrix}.
\end{align}
This yields:
\[
\begin{aligned}
B_{W_r\times W_r}^{-1}
&=
\begin{pmatrix}
M^{-1}
 +M^{-1}(MQ)D_r^{-1}(Q^{t}M)M^{-1}
&
-M^{-1}(MQ)D_r^{-1}
\\[1mm]
-D_r^{-1}(Q^{t}M)M^{-1}
&
D_r^{-1}
\end{pmatrix}                                                     \\[2mm]
&=
\begin{pmatrix}
M^{-1}+QD_r^{-1}Q^{t} & -QD_r^{-1}\\
-D_r^{-1}Q^{t}        & D_r^{-1}
\end{pmatrix}.
\end{aligned}
\]
Applying the same formula to $A_{C_r\times C_r}$ with respect to $C_r= S \sqcup R$ gives the following:
\begin{align*}
A_{C_r\times C_r}^{-1} &= \begin{pmatrix}
A_{S\times S} & A_{S\times R}\\
A_{R\times S} & A_{R\times R}
\end{pmatrix}^{-1}\\
&=
\begin{pmatrix}
A_{S\times S}^{-1}
 +A_{S\times S}^{-1}A_{S\times R}D_r^{-1}
  A_{R\times S}A_{S\times S}^{-1}
&
-A_{S\times S}^{-1}A_{S\times R}D_r^{-1}
\\
-D_r^{-1}A_{R\times S}A_{S\times S}^{-1}
&
D_r^{-1}
\end{pmatrix}.
\end{align*}
Relative to $W_{r-1}=U \sqcup S$, suppose the matrix $N:=M^{-1}=
\begin{pmatrix}
N_{U\times U}&N_{U\times S}\\
N_{S\times U}&N_{S\times S}
\end{pmatrix}$. Then, for $H:=A_{S\times S}^{-1}A_{S\times R}$ with $Q=\begin{pmatrix}0\\ H\end{pmatrix}$, we get
\[
\begin{aligned}
N+QD_r^{-1}Q^t
&=
N
+
\begin{pmatrix}0\\ H\end{pmatrix}
D_r^{-1}
\begin{pmatrix}0&H^t\end{pmatrix}
=
\begin{pmatrix}
N_{U\times U} & N_{U\times S}\\
N_{S\times U} & N_{S\times S}+HD_r^{-1}H^t
\end{pmatrix}.
\end{aligned}
\]
Thus only the $S\times S$ block of $N$ receives a correction.
Similarly,
\[
-QD_r^{-1}
=
\begin{pmatrix}0\\-HD_r^{-1}\end{pmatrix},
\qquad
-D_r^{-1}Q^t
=
\begin{pmatrix}0&-D_r^{-1}H^t\end{pmatrix}.
\]
Likewise, since $H=A_{S\times S}^{-1}A_{S\times R}$ and $H^t=A_{R\times S}A_{S\times S}^{-1}$, the block inverse formula for $A_{C_r\times C_r}$, relative to $C_r=S\sqcup R$, becomes
\[
\begin{aligned}
A_{C_r\times C_r}^{-1}
&=
\begin{pmatrix}
A_{S\times S}^{-1}+HD_r^{-1}H^t & -HD_r^{-1}\\
-D_r^{-1}H^t & D_r^{-1}
\end{pmatrix}.
\end{aligned}
\]
Therefore, relative to $W_r=U\sqcup S\sqcup R$, the nonzero part of the desired is given by:
\[
\begin{aligned}
&\iota_{W_r}\!\left(B_{W_r\times W_r}^{-1}\right)
-\iota_{W_{r-1}}\!\left(M^{-1}\right)
-\iota_{C_r}\!\left(A_{C_r\times C_r}^{-1}\right)
+\iota_S\!\left(A_{S\times S}^{-1}\right)                         \\[1mm]
&=
\begin{pmatrix}
N_{U\times U}
&
N_{U\times S}
&
0
\\
N_{S\times U}
&
N_{S\times S}+HD_r^{-1}H^t
&
-HD_r^{-1}
\\
0
&
-D_r^{-1}H^t
&
D_r^{-1}
\end{pmatrix}
-
\begin{pmatrix}
N_{U\times U}&N_{U\times S}&0\\
N_{S\times U}&N_{S\times S}&0\\
0&0&0
\end{pmatrix}                                                     \\[2mm]
&\quad-
\begin{pmatrix}
0&0&0\\
0&A_{S\times S}^{-1}+HD_r^{-1}H^t&-HD_r^{-1}\\
0&-D_r^{-1}H^t&D_r^{-1}
\end{pmatrix}
+
\begin{pmatrix}
0&0&0\\
0&A_{S\times S}^{-1}&0\\
0&0&0
\end{pmatrix}                                               =0.
\end{aligned}
\]
Therefore, the desired identity follows. We now iterate this, and claim:
\begin{equation}\label{eq:john-lund-inductive}
\iota_{W_r}\!\left(B_{W_r\times W_r}^{-1}\right)
=
\sum_{q=1}^{r}
\iota_{C_q}\!\left(A_{C_q\times C_q}^{-1}\right)
-
\sum_{q=2}^{r}
\iota_{S_q}\!\left(A_{S_q\times S_q}^{-1}\right) \qquad \mbox{for }r=1,\ldots,m.
\end{equation}
We begin with the base case $r=1$. Here we have $W_1=C_1$ and $B_{W_1\times W_1}=A_{C_1\times C_1}$. Hence
$\iota_{W_1}\!\left(B_{W_1\times W_1}^{-1}\right)=\iota_{C_1}\!\left(A_{C_1\times C_1}^{-1}\right)$, which is \eqref{eq:john-lund-inductive}, since the second sum is empty. Now let $r\geq2$, and suppose that \eqref{eq:john-lund-inductive} holds at stage $r-1$. Applying \eqref{eq:john-lund} at the $r$-th attachment and then using the induction hypothesis gives the following:
\[
\begin{aligned}
&\iota_{W_r}\!\left(B_{W_r\times W_r}^{-1}\right) \\
&=
\iota_{W_{r-1}}\!\left(
B_{W_{r-1}\times W_{r-1}}^{-1}
\right)
+\iota_{C_r}\!\left(A_{C_r\times C_r}^{-1}\right)
-\iota_{S_r}\!\left(A_{S_r\times S_r}^{-1}\right)\\
&=
\sum_{q=1}^{r-1}
\iota_{C_q}\!\left(A_{C_q\times C_q}^{-1}\right)
-
\sum_{q=2}^{r-1}
\iota_{S_q}\!\left(A_{S_q\times S_q}^{-1}\right)
+\iota_{C_r}\!\left(A_{C_r\times C_r}^{-1}\right)
-\iota_{S_r}\!\left(A_{S_r\times S_r}^{-1}\right)\\
&=
\sum_{q=1}^{r}
\iota_{C_q}\!\left(A_{C_q\times C_q}^{-1}\right)
-
\sum_{q=2}^{r}
\iota_{S_q}\!\left(A_{S_q\times S_q}^{-1}\right).
\end{aligned}
\]
This proves \eqref{eq:john-lund-inductive} by induction. Finally, $W_m=V$, so $\iota_{W_m}\!\left(B_{W_m\times W_m}^{-1}\right)=B^{-1}$. Moreover, $C_1,\ldots,C_m$ are precisely the maximal cliques of
$G$, and the parent--child edges $C_{\pi(r)}C_r$ for $r=2,\ldots,m$ are precisely the edges of the clique-tree $\ct$. Since $S_r=C_r\cap C_{\pi(r)}$, the identity at $r=m$ becomes
\begin{align}\label{eq:john-lund-1}
B^{-1}
&=
\sum_{C\in\mathcal C(G)}
\iota_C\!\left(A_{C\times C}^{-1}\right)
-
\sum_{CD\in E(\ct)}
\iota_{C\cap D}\!\left(
A_{(C\cap D)\times(C\cap D)}^{-1}
\right).
\end{align}
The second sum is indexed by the edges of $\ct$; thus, if the
same separator occurs on several clique-tree edges, it is counted
with the corresponding multiplicity.

Now suppose that $i\neq j$ and $\{i,j\}\notin E$. No clique of
$G$ contains both $i$ and $j$, since two distinct vertices
belonging to a common clique must be adjacent. Hence no maximal
clique $C$, and therefore no clique separator $C\cap D$, contains
both indices. Every term on the right-hand side of
\eqref{eq:john-lund-1} consequently has zero $(i,j)$-entry.
It follows that $(B^{-1})_{ij}=0$, which proves that $B^{-1}$ is supported on the diagonal and the edges of $G$.

The determinant formula follows from the same clique attachments. Indeed, for $r\geq2$, the matrix $D_r
=
A_{R_r\times R_r}
-
A_{R_r\times S_r}A_{S_r\times S_r}^{-1}A_{S_r\times R_r}$ is the Schur complement both of
$B_{W_{r-1}\times W_{r-1}}$ in $B_{W_r\times W_r}$ and of
$A_{S_r\times S_r}$ in $A_{C_r\times C_r}$. Hence the Schur determinant formula gives
\[
\det B_{W_r\times W_r}
=
\det B_{W_{r-1}\times W_{r-1}}\det D_r,
\qquad
\det A_{C_r\times C_r}
=
\det A_{S_r\times S_r}\det D_r.
\]
Therefore
\[
\det B_{W_r\times W_r}
=
\det B_{W_{r-1}\times W_{r-1}}
\frac{\det A_{C_r\times C_r}}
     {\det A_{S_r\times S_r}}.
\]
Since $B_{W_1\times W_1}=A_{C_1\times C_1}$, iterating the identity from $r=m$ down to $r=2$ yields 
\[
\det B
=
\left[\prod_{r=1}^m\det A_{C_r\times C_r}\right]
\left[\prod_{r=2}^m\det A_{S_r\times S_r}\right]^{-1}.
\]
Finally, the $C_r$ are the maximal cliques of $G$, while
$S_r=C_r\cap C_{\pi(r)}$, $r\geq2$, are precisely the clique separators indexed by the edges of $\ct$, with their clique-tree multiplicities. Thus the desired formula holds.

\medskip
\noindent\emph{\textbf{2.} Independence of $B$ from the given clique-tree.}
\medskip

Let $C$ be a nonsingular Lorentz-Gram completion of $A$ such that $(C^{-1})_{ij}=0$ whenever $i\neq j$ and $\{i,j\}\notin E$. Since $C$ agrees with $A$ on every clique, its principal submatrices on the maximal cliques and clique separators are the corresponding principal submatrices of $A$, all of which are nonsingular. Since $C^{-1}$ is supported on the diagonal and the edges of the chordal graph $G$, the inverse formula of Johnson and Lundquist \cite{Johnson-Lundquist} gives
\[
C^{-1}
=
\sum_{K\in\mathcal C(G)}
\iota_K\!\left(A_{K\times K}^{-1}\right)
-
\sum_{KL\in E(\ct)}
\iota_{K\cap L}\!\left(
A_{(K\cap L)\times(K\cap L)}^{-1}
\right).
\]
By the previous part, the right-hand side is $B^{-1}$. Hence $C^{-1}=B^{-1}$, and thus $C=B$.

Thus $B$ is the unique nonsingular Lorentz-Gram completion of $A$ whose inverse vanishes on the nonedges of $G$. Finally, for every clique-tree $\ct'$, Theorem~\ref{T:clique-product-completion} produces a completion $B_{\ct'}$, and the previous part shows that $B_{\ct'}^{-1}$ vanishes on the nonedges. By uniqueness, $B_{\ct'}=B$. Hence the completion is independent of the chosen clique-tree.

\medskip
\noindent\emph{\textbf{3.} The orthogonal innovation construction of $B$.}
\medskip

Fix a simplicial elimination ordering
$v_1,\ldots,v_n$ of $G$, and set
\[
W_k:=\{v_1,\ldots,v_k\},
\qquad
S_k:=N_G(v_k)\cap W_{k-1}
\quad (k\geq2).
\]
By simplicial elimination, $S_k$ is a clique. Moreover, every
induced graph $G[W_k]$ is connected. Indeed, $G[W_n]=G$ is
connected, and deleting a simplicial vertex from a connected graph
preserves connectedness: whenever a path passes through that
vertex, its two neighbouring vertices on the path are adjacent and
the path can be shortened to avoid it. Descending induction on
$k$ therefore proves the assertion. In particular,
$S_k\neq\varnothing$ for every $k\geq2$.

We verify Algorithm~
\ref{alg:chordal-orthogonal-innovation} by induction on $k$.
More precisely, we prove that the vectors indexed by $W_k$ are
well defined, that their Lorentz-Gram matrix $\widehat B_k$ is a
nonsingular completion of the restriction of $A$ to $W_k$, and
that $\widehat B_k^{-1}$ vanishes on the nonedges of $G[W_k]$.

For $k=1$, set
$u_{v_1}:=(1,0)\in\R\oplus\R^{n-1}$. Then
$\widehat B_1=(1)$, so all three assertions hold. Now let
$k\geq2$, and suppose that they hold at stage $k-1$. Put
\[
A_k:=A_{S_k\times S_k},
\qquad
g_k:=A_{S_k\times\{v_k\}},
\qquad
c_k:=A_k^{-1}g_k,
\qquad
p_k:=\sum_{s\in S_k}(c_k)_s u_s.
\]
Since $S_k$ is a clique, $A_k$ is nonsingular by hypothesis.
Furthermore, $S_k\cup\{v_k\}$ is a clique, and its prescribed
Lorentz-Gram matrix is
\[
\begin{pmatrix}
A_k & g_k\\
g_k^t & 1
\end{pmatrix}.
\]
This matrix has inertia $(1,|S_k|,0)$, whereas $A_k$ has
inertia $(1,|S_k|-1,0)$. By inertia additivity for Schur
complements \cite{Haynsworth1968}, the scalar Schur complement satisfies $1-g_k^tA_k^{-1}g_k<0$. Consequently, $\rho_k:=\sqrt{g_k^tA_k^{-1}g_k-1}$ is well defined and positive. Inductively, every vector constructed before the $k$-th stage belongs to $\R\oplus\Span\{\varepsilon_1,\ldots,\varepsilon_{k-2}\}$. Thus $e_k:=(0,\varepsilon_{k-1})$ is a negative unit vector
orthogonal to all previously constructed vectors. Define
\[
u_{v_k}:=p_k+\rho_ke_k.
\]
For $t\in S_k$, the induction hypothesis gives
\[
[p_k,u_t]
=
\sum_{s\in S_k}(c_k)_s a_{st}
=
(c_k^tA_k)_t
=
(g_k)_t
=
a_{v_kt}.
\]
Likewise, $[p_k,p_k]=c_k^tA_kc_k=g_k^tA_k^{-1}g_k$. Since $e_k$ is orthogonal to $p_k$ and
$[e_k,e_k]=-1$, it follows that $[u_{v_k},u_{v_k}]=[p_k,p_k]-\rho_k^2=1$. Hence the new vector has the prescribed inner product with every
earlier neighbour and has the prescribed diagonal entry. Since
$S_k$ is exactly the set of earlier neighbours of $v_k$,
these are all the prescribed entries involving $v_k$. Therefore
$\widehat B_k$ is a completion of the restriction of $A$ to
$W_k$. It now remains to verify the other two induction assertions, namely, that $\widehat{B}_k$ is nonsingular and that $\widehat{B}_k^{-1}$ vanishes on the nonedges of $G[W_k]$. Define a column $q_k$, indexed by $W_{k-1}$, by
\[
(q_k)_{S_k}:=c_k,
\qquad
(q_k)_{W_{k-1}\setminus S_k}:=0.
\]
For $j<k$, the orthogonality of $e_k$ gives
\[
\begin{aligned}
[u_{v_j},u_{v_k}]
&=[u_{v_j},p_k+\rho_ke_k]
=[u_{v_j},p_k]
=\sum_{s\in S_k}(c_k)_s[u_{v_j},u_s]\\
&=\sum_{s\in W_{k-1}}
   (\widehat B_{k-1})_{v_j,s}(q_k)_s
=(\widehat B_{k-1}q_k)_{v_j},
\end{aligned}
\]
where the penultimate equality follows because
$(q_k)_s=(c_k)_s$ for $s\in S_k$ and $(q_k)_s=0$ for $s\in W_{k-1}\setminus S_k$. Consequently,
\begin{equation}\label{eq:innovation-block}
\widehat B_k
=
\begin{pmatrix}
\widehat B_{k-1} & \widehat B_{k-1}q_k\\
q_k^t\widehat B_{k-1} & 1
\end{pmatrix}.
\end{equation}
Because $q_k$ is supported on $S_k$ and
$(\widehat B_{k-1})_{S_k\times S_k}=A_k$, the Schur complement
of $\widehat B_{k-1}$ in \eqref{eq:innovation-block} is given by
\[
\delta_k
:=
1-q_k^t\widehat B_{k-1}q_k
=
1-c_k^tA_kc_k
=
-\rho_k^2<0.
\]
Thus $\widehat B_k$ is nonsingular. Furthermore, the block inverse formula \eqref{eq:block-inverse} gives
\[
\widehat B_k^{-1}
=
\begin{pmatrix}
\widehat B_{k-1}^{-1}
   +\delta_k^{-1}q_kq_k^t
&
-\delta_k^{-1}q_k
\\
-\delta_k^{-1}q_k^t
&
\delta_k^{-1}
\end{pmatrix}.
\]
By the induction hypothesis,
$\widehat B_{k-1}^{-1}$ is supported on the diagonal and the
edges of $G[W_{k-1}]$. Since $q_k$ is supported on $S_k$,
the correction $q_kq_k^t$ is supported on
$S_k\times S_k$, while the new off-diagonal block is supported
on $S_k\times \{v_k\}$. All these positions lie in the clique
$S_k\cup\{v_k\}$. Hence $\widehat B_k^{-1}$ vanishes on the
nonedges of $G[W_k]$, completing the induction.

Let $\widehat B:=\widehat B_n$. Every constructed vector is unit timelike. Moreover, if $\{i,j\}\in E$, then
$[u_i,u_j]=a_{ij}\geq1$, so $u_i$ and $u_j$ lie on the same
sheet of the unit hyperboloid. Since $G$ is connected and
$u_{v_1}$ lies on the positive sheet, all the vectors belong to $\Lob(\R^{n-1})$. Thus $\widehat B$ is a nonsingular Lorentz-Gram completion of $A$, and its inverse vanishes on the nonedges of $G$. By previous steps, $\widehat B=B$. Consequently, the Lorentz-Gram matrix produced by the algorithm is independent
of the chosen simplicial elimination ordering.
\end{proof}

\begin{algorithm}[H]
\caption{Orthogonal innovation realization of the canonical chordal completion}
\label{alg:chordal-orthogonal-innovation}
\begin{algorithmic}[1]
\Require A connected chordal graph $G=(V,E)$, where
$|V|=n$, and a partial matrix $A$ satisfying the hypotheses of Theorem~\ref{T:clique-product-completion}
\Ensure Vectors $(u_v)_{v\in V}\subseteq\Lob(\R^{n-1})$
whose Lorentz-Gram matrix is the completion $B$
\State Choose a simplicial elimination ordering
$V=\{v_1,\ldots,v_n\}$
\State Let $\varepsilon_1,\ldots,\varepsilon_{n-1}$ be the
standard basis of $\R^{n-1}$
\State Set $u_{v_1}\gets(1,0)\in\R\oplus\R^{n-1}$
\For{$k=2,\ldots,n$}
    \State $S_k\gets
    N_G(v_k)\cap\{v_1,\ldots,v_{k-1}\}$
    \State $A_k\gets A_{S_k\times S_k}$
    \State $g_k\gets A_{S_k\times\{v_k\}}$
    \State $c_k\gets A_k^{-1}g_k$
    \State $p_k\gets\displaystyle
    \sum_{s\in S_k}(c_k)_s u_s$
    \State $\rho_k\gets
    \sqrt{g_k^tA_k^{-1}g_k-1}$
    \State $e_k\gets(0,\varepsilon_{k-1})$
    \State $u_{v_k}\gets p_k+\rho_ke_k$
\EndFor
\State $B\gets\bigl([u_i,u_j]\bigr)_{i,j\in V}$
\State \Return $(u_v)_{v\in V}$ and $B$
\end{algorithmic}
\end{algorithm}

We conclude this subsection by proving
Theorem~\ref{T:LG-variational-characterization}. The differential of
the log-absolute-determinant, together with the inverse-sparsity
characterization above, identifies the canonical completion as the
unique stationary nonsingular Lorentz-Gram completion. A determinant
inequality for the negative-definite Schur complements arising in
clique attachments then establishes its unique maximality in
absolute determinant.

\begin{proof}[Proof of Theorem~\ref{T:LG-variational-characterization}]
We first prove the stationary characterization. Fix
$X\in\A(A)\cap GL_n$, and let $H=(h_{ij})_{i,j\in V}$ be a symmetric matrix satisfying $h_{ij}=0$ whenever $i=j$ or $\{i,j\}\in E$. Then $X+tH$ retains all prescribed entries and remains nonsingular for sufficiently small $t$. Set $g(t):=\det(X+tH)$. Since $g(0)=\det X\neq0$,
continuity implies that $g(t)$ is nonzero and has constant
sign for sufficiently small $t$. Jacobi's formula gives
\[
g'(t)
=\det(X+tH)\,
\operatorname{Tr}\bigl((X+tH)^{-1}H\bigr).
\]
Consequently,
\[
\begin{aligned}
\left.\frac{d}{dt}\log|\det(X+tH)|\right|_{t=0}
&=\frac{g'(0)}{g(0)}
=\frac{\det X\,\operatorname{Tr}(X^{-1}H)}{\det X}
=\operatorname{Tr}(X^{-1}H).
\end{aligned}
\]
Since $X^{-1}$ and $H$ are symmetric, and $H$ vanishes at
every prescribed position,
\[
\operatorname{Tr}(X^{-1}H)
=
2\sum_{\substack{i<j\\\{i,j\}\notin E}}
(X^{-1})_{ij}h_{ij}.
\]
The entries $h_{ij}$ in this sum may be chosen independently.
Thus the derivative vanishes for every admissible $H$ if and
only if $(X^{-1})_{ij}=0$ whenever $i\neq j$ and $\{i,j\}\notin E$. This proves the equivalence. By Theorem~\ref{T:chordal-canonicity}, $B$ is the unique nonsingular Lorentz-Gram completion of $A$ with this inverse sparsity pattern. Hence it is also the unique such completion that is stationary for $\Phi$.

We now prove the absolute-determinant assertion. We use Fischer's determinant inequality, including its equality condition~\cite{Fischer1908,HornJohnson}: if $K=
\begin{pmatrix}
P&Z\\
Z^t&Q
\end{pmatrix}
\succ0$, then $\det K\leq\det P\,\det Q$, with equality if and only if $Z=0$. Let $X$ be a Lorentz-Gram completion of $A$. If $X$ is singular, then $|\det X|=0<|\det B|$ so suppose that $X$ is nonsingular. Every nonempty principal
submatrix of $X$ is then also nonsingular Lorentz-Gram: a
realizing family for $X$ is linearly independent, and the same
is true of every subfamily. Use the rooted clique-tree and attachment notation from the proof
of Theorem~\ref{T:clique-product-completion}:
\[
W_1=C_1,
\qquad
S_r=C_r\cap C_{\pi(r)},
\qquad
R_r=C_r\setminus S_r,
\qquad
W_r=W_{r-1}\sqcup R_r.
\]
Fix $r\geq2$, and abbreviate $S:=S_r$, $R:=R_r$, and $U:=W_{r-1}\setminus S$. 
Since $X$ agrees with $A$ on the clique $C_r=S\sqcup R$,
ordering the indices of $W_r$ as $S\sqcup U\sqcup R$ gives
\[
X_{W_r\times W_r}
=
\begin{pmatrix}
A_{S\times S} & X_{S\times U} & A_{S\times R}\\
X_{U\times S} & X_{U\times U} & X_{U\times R}\\
A_{R\times S} & X_{R\times U} & A_{R\times R}
\end{pmatrix}.
\]
The block $A_{S\times S}$ is nonsingular. Its Schur
complement, with the remaining indices ordered as
$U\sqcup R$, is therefore
\[
\begin{aligned}
&
\begin{pmatrix}
X_{U\times U} & X_{U\times R}\\
X_{R\times U} & A_{R\times R}
\end{pmatrix}
-
\begin{pmatrix}
X_{U\times S}\\
A_{R\times S}
\end{pmatrix}
A_{S\times S}^{-1}
\begin{pmatrix}
X_{S\times U} & A_{S\times R}
\end{pmatrix}
\\
&\qquad=
\begin{pmatrix}
X_{U\times U}
-X_{U\times S}A_{S\times S}^{-1}X_{S\times U}
&
X_{U\times R}
-X_{U\times S}A_{S\times S}^{-1}A_{S\times R}
\\
X_{R\times U}
-A_{R\times S}A_{S\times S}^{-1}X_{S\times U}
&
A_{R\times R}
-A_{R\times S}A_{S\times S}^{-1}A_{S\times R}
\end{pmatrix}
=:
-\begin{pmatrix}
P&Z\\
Z^t&Q
\end{pmatrix}
=:-K_r.
\end{aligned}
\]
Here the lower-left block equals $-Z^t$ because $X$ and
$A_{S\times S}^{-1}$ are symmetric. Both $X_{W_r\times W_r}$ and $A_{S\times S}$ have exactly
one positive eigenvalue and no zero eigenvalues. Inertia additivity
therefore shows that their Schur complement is negative definite. Hence $K_r\succ0$. The Schur determinant formula gives
\[
\begin{aligned}
|\det X_{W_r\times W_r}|
&=
|\det A_{S\times S}|\det K_r,\\
|\det X_{W_{r-1}\times W_{r-1}}|
&=
|\det A_{S\times S}|\det P,\\
|\det A_{C_r\times C_r}|
&=
|\det A_{S\times S}|\det Q.
\end{aligned}
\]
Applying Fischer's inequality, we obtain
\[
|\det X_{W_r\times W_r}|
\leq
|\det A_{S_r\times S_r}|\,\det P\,\det Q
=
|\det X_{W_{r-1}\times W_{r-1}}|
\frac{|\det A_{C_r\times C_r}|}
     {|\det A_{S_r\times S_r}|}.
\]
Since $X_{W_1\times W_1}=A_{C_1\times C_1}$, iteration yields
\[
|\det X|
\leq
\frac{\displaystyle\prod_{r=1}^{m}
|\det A_{C_r\times C_r}|}
{\displaystyle\prod_{r=2}^{m}
|\det A_{S_r\times S_r}|}
=
|\det B|,
\]
where the last equality is the determinant formula in Theorem~\ref{T:chordal-canonicity}. For $m=1$, the denominator is an empty product and $X=B=A$. Finally, suppose that $|\det X|=|\det B|$. All determinants appearing in the preceding inequalities have positive absolute value, so equality must hold at every attachment. Thus $Z=0$ at each step, which gives $X_{U\times R}=X_{U\times S}A_{S\times S}^{-1}A_{S\times R}$. The same identity on the rows indexed by $S$ holds automatically, because $X_{S\times S}=A_{S\times S}$ and $X_{S\times R}=A_{S\times R}$. Consequently, $X_{W_{r-1}\times R_r}=X_{W_{r-1}\times S_r}A_{S_r\times S_r}^{-1}A_{S_r\times R_r}$. This is exactly the attachment rule defining $B$. Starting from $X_{W_1\times W_1}=A_{C_1\times C_1}=B_{W_1\times W_1}$, induction therefore gives $X_{W_r\times W_r}=B_{W_r\times W_r}$ at every stage, and hence $X=B$. The converse is immediate.
\end{proof}

This completes the algebraic and variational description of the canonical clique-product completion. We now turn to the metric geometry of the resulting hyperbolic embeddings.

\subsection{Comparison with graph metrics}

We now compare the hyperbolic distances induced by the canonical completion with the shortest-path metric of the specification graph. Since the realization preserves every prescribed edge length, the hyperbolic triangle inequality gives $d_{\Lob}(f_A(i),f_A(j))\leq d_G(i,j)$. Controlling the distortion therefore amounts to obtaining a lower bound for the realized distances. We consider two questions: how uniform scaling affects the tree construction, and when a fixed chordal graph admits a distortion bound uniform over all admissible data.

\subsubsection{Scaling limits for trees}

Uniformly enlarging the prescribed edge lengths makes the product-distance realization increasingly faithful to the tree metric. Each edge is realized exactly, and Theorem~\ref{T:asymptotically-isometric-tree-embeddings} shows that the ratio between the realized distance and the scaled tree distance tends to one uniformly over all pairs of distinct vertices. We prove this directly from the path-product formula.

\begin{proof}[Proof of Theorem~\ref{T:asymptotically-isometric-tree-embeddings}] 

We first verify that the completion $B^{(\tau)}$ invoked in the statement is well defined. For every edge $\{i,j\}\in E$, the corresponding maximal clique matrix of $A^{(\tau)}$ is
\begin{align}\label{eq:2by2-scaling}
\begin{pmatrix}
1&\cosh(\tau\lambda_{ij})\\
\cosh(\tau\lambda_{ij})&1
\end{pmatrix}.
\end{align}
Its eigenvalues are $1+\cosh(\tau\lambda_{ij})>0$ and $1-\cosh(\tau\lambda_{ij})<0$, since $\tau>0$ and $\lambda_{ij}>0$.  Hence this matrix has exactly one positive eigenvalue and no zero eigenvalue; since its diagonal entries are $1$ and its off-diagonal entries are at least $1$, Theorem~\ref{T:LG-ker-char} shows that it is a nonsingular Lorentz--Gram matrix. Thus every maximal-clique principal submatrix of $A^{(\tau)}$ is a nonsingular Lorentz--Gram matrix, and Theorem~\ref{T:tree-path-product-completion} applies and produces the completion $B^{(\tau)}$.
% Its eigenvalues are $1+\cosh(\tau\lambda_{ij})>0$ and $1-\cosh(\tau\lambda_{ij})<0$, since $\tau>0$ and $\lambda_{ij}>0$. Thus every maximal clique submatrix of $A^{(\tau)}$ is a nonsingular Lorentz--Gram matrix {\color{blue}by Theorem~\ref{T:LG-ker-char}}. Consequently, Theorem~\ref{T:tree-path-product-completion} applies and produces the completion $B^{(\tau)}$.

By Lemma~\ref{L:LG-realization-isometry}, the distortion is independent of the chosen embedding realization. We may therefore use the realization produced by Algorithm~\ref{alg:tree-orthogonal-innovation}. By Theorem~\ref{T:tree-path-product-completion}, the obtained vectors $u_v^{(\tau)}\in \Lob(\R^{n-1})$, and their Lorentz-Gram matrix is the product-distance completion of $A^{(\tau)}$. Hence with $f_{\tau}(i)=u_i^{(\tau)}$, we have
\[
\bigl[f_{\tau}(i),f_{\tau}(j)\bigr]
=\bigl[u_i^{(\tau)},u_j^{(\tau)}\bigr]
=
\prod_{e\in P(i,j)}a_e^{(\tau)}
=
\prod_{e\in P(i,j)}\cosh(\tau\lambda_e).
\]
Therefore, the hyperboloid distance formula gives 
\[
d_{\Lob}\bigl(f_\tau(i),f_\tau(j)\bigr)= \ach\Big[
\prod_{e\in P(i,j)}\cosh(\tau\lambda_e)
\Big].
\]
Since each $\lambda_e>0$, the product-distance completion of $A^{(\tau)}$ is nonsingular. Therefore by Lemma~\ref{lem:sign-inn} $f_\tau$ is injective, it is a metric embedding of $(V,\tau d_T)$ into $\Lob(\R^{n-1})$. We estimate its distortion. For $x\geq 1$, we have $\ach(x)=\log\left(x+\sqrt{x^2-1}\right)\geq\log x$. Therefore
\begin{align*}
d_\Lob(f_\tau(i),f_\tau(j))
&\geq \sum_{e\in P(i,j)}\log\cosh(\tau\lambda_e)
\geq
\sum_{e\in P(i,j)}
\bigl(\tau\lambda_e-\log2\bigr)\\
&=
\tau d_T(i,j)-|P(i,j)|\log2,
\end{align*}
where we use $\cosh t=\tfrac{1}{2}(e^t+e^{-t})\geq\frac{e^t}{2}$ for $t\geq 0$. Since $d_T(i,j)= \sum_{e\in P(i,j)}\lambda_e\geq |P(i,j)|\lambda_{\min}$, we have $|P(i,j)|\leq \frac{d_T(i,j)}{\lambda_{\min}}$. Therefore, we get the following estimate:
\[
d_\Lob(f_\tau(i),f_\tau(j))
\geq
\left(
1-\frac{\log2}{\tau\lambda_{\min}}
\right)\tau d_T(i,j).
\]
The other estimate $d_\Lob(f_\tau(i),f_\tau(j))\leq\tau d_T(i,j)$ follows from the non-expansive nature of the embedding. Therefore, the distortion is at most $\left(1-\frac{\log2}{\tau\lambda_{\min}}\right)^{-1}$, where the right-hand side tends to $1$ as $\tau\to\infty$. The desired limit follows from this.
\end{proof}

\begin{remark}[Scaling on block graphs and an entrywise preserver obstruction]
The preceding proof extends, for any fixed $\tau>0$, to a block graph -- that is, a chordal graph whose minimal separators are singletons -- provided that all scaled maximal clique matrices remain nonsingular Lorentz--Gram matrices. Indeed, define the scaled partial data by
\[
a_{ii}^{(\tau)}:=1,
\qquad
a_{ij}^{(\tau)}
:=
\cosh\bigl(\tau\ach(a_{ij})\bigr),
\quad \{i,j\}\in E.
\]
Thus scaling the prescribed distances by $\tau$ amounts to applying entrywise the function
\[
\boldsymbol{\phi}_{\tau}(t)
:=
\cosh\bigl(\tau\ach(t)\bigr)
\]
to every maximal clique matrix.

For a tree, every maximal clique is an edge, and \eqref{eq:2by2-scaling} shows that its scaled clique matrix is a nonsingular Lorentz--Gram matrix for every $\tau>0$. Hence the scaled partial data are automatically admissible. For a block graph with larger maximal cliques, this admissibility is no longer automatic and must be verified for the given clique data.

Although every graph under consideration is finite, an unconditional extension to all finite block graphs and all admissible data would amount to an entrywise transform principle in all finite orders: indeed, every complete graph $K_n$ is a block graph. By Theorem~3.2 and Section~3.1, especially equation~(3.4), of \cite{Belton-Guillot-Khare-Putinar}, every entrywise Lorentz--Gram preserver $f$ satisfies
\[
f(t)=O(t)\qquad(t\to\infty).
\]
On the other hand, for $x=\tau \ach t$, we have
\[
\boldsymbol{\phi}_{\tau}(t)
=\cosh \tau x
= \frac{e^{\tau x}+ e^{-\tau x}}{2}
=
\frac{
 \bigl(t+\sqrt{t^2-1}\bigr)^\tau
 +
 \bigl(t+\sqrt{t^2-1}\bigr)^{-\tau}}
 {2}
\sim 2^{\tau-1}t^\tau.
\]
Thus $\boldsymbol{\phi}_{\tau}$ does not preserve Lorentz--Gram matrices of all finite orders when $\tau>1$. Consequently, the tree hypothesis makes the scaled data in Theorem~\ref{T:asymptotically-isometric-tree-embeddings} automatically admissible, whereas its extension to a fixed block graph is conditional on the scaled maximal clique matrices remaining admissible for all sufficiently large $\tau$.
\end{remark}

\subsubsection{Singleton versus nonsingleton separators}

We next determine how the separator structure governs distortion over all admissible data on a fixed chordal graph. When every separator is a singleton, the clique-tree identifies a sequence of vertices through which every connecting path must pass. The graph distance is a sum of successive lengths, whereas the canonical completion gives a product of their hyperbolic cosines. Comparing these expressions yields the sharp bound in terms of $\diam(G)$.

A separator with at least two vertices permits a different geometric configuration. Two vertices on opposite sides can approach a common point in the hyperbolic span of the separator while remaining uniformly separated from every separator vertex. We construct admissible data for which their distance in the canonical realization tends to zero, although every graph path between them has length bounded away from zero.

\begin{proof}[Proof of Theorem~\ref{T:separator-size-distortion-dichotomy}]

We begin by proving the first case.
     
\medskip
\textit{\textbf{1.} Clique-tree separators are singletons.}
\medskip

\noindent Observe that every prescribed edge length is strictly positive. Indeed, if $\{i,j\}\in E$, then $i,j$ belong to a maximal clique $C$. A realization of $A_{C\times C}$ is linearly independent by Lemma~\ref{lem:sign-inn}, so its vectors corresponding to $i,j$ are distinct. Hence $a_{ij}>1$ and $\lambda_{ij}=\ach(a_{ij})>0$.

First, for distinct vertices $i,j\in V$, we will show that for all admissible $A$,
\begin{align}\label{eq:sep-size-1-claim1}
\frac{d_G(i,j)}{d_{\Lob}(f_A(i),f_A(j))} \leq \diam (G)^{1/2}.
\end{align}

For distinct vertices $i,j\in V$ choose maximal cliques $C_0,C_m$ containing $i,j$, respectively, and let $C_0,C_1,\ldots,C_m$ be the unique path joining them in $\ct$. 

First, let us rule out the case $C_0=C_m$. Then $\{i,j\}\in E$. Since $f_A$ preserves the given edge length, $d_G(i,j)=d_{\Lob}(f_A(i),f_A(j))$. Therefore, \eqref{eq:sep-size-1-claim1} holds.

We therefore assume $C_0\neq C_m$, and write
\[
C_{r-1}\cap C_r=\{s_r\}
\qquad(1\leq r\leq m),
\]
and set $x_0:=i$, $x_r:=s_r$ for $1\leq r\leq m$, and $x_{m+1}:=j$. Now, delete consecutive repetitions from $(x_k)_{k=0}^{m+1}$ to obtain $y_0=i,y_1,\ldots,y_q=j$. Then the following conclusions hold.

\medskip
\emph{$\bullet$ Consecutive distinct vertices are adjacent.} Before repetitions are deleted, we have
\[
i,s_1\in C_0,\qquad
s_r,s_{r+1}\in C_r\quad(1\leq r<m),\qquad
s_m,j\in C_m.
\]
Hence every two consecutive distinct terms of the reduced sequence belong to a common maximal clique and are therefore adjacent in $G$.

\medskip
\emph{$\bullet$ The reduced vertices are all distinct.} Suppose $s_r=s_t=v$ for some $r<t$. Since $v$ belongs to $C_r$ and $C_{t-1}$, the clique intersection property (or the induced subtree property) in Theorem~\ref{T:clique-trees} implies that every clique between them in $\ct$ contains $v$. Therefore
\[
s_r=s_{r+1}=\cdots=s_t=v.
\]
Similarly, if $i=s_t$, then $i=s_1=\cdots=s_t$, and if $s_r=j$, then $s_r=\cdots=s_m=j$. Thus every repetition in $x_0,\ldots,x_{m+1}$ is consecutive, and the reduced vertices $y_0,\ldots,y_q$ are pairwise distinct.

\medskip
\emph{$\bullet$ The internal vertices are separators.} Clearly, each internal vertex $y_k$ is equal to some $s_{r}$. Delete the clique-tree edge $C_{r-1}C_{r}$. Let $\ct_r^{-}$ and $\ct_r^{+}$ be the two resulting components containing $C_0$ and $C_m$ in $\ct$, respectively. Let $V_r^{-}$ and $V_r^{+}$ be the unions of the cliques belonging to these components. The separator property in Theorem~\ref{T:clique-trees} gives
\[
V_r^{-}\cap V_r^{+}
=C_{r-1}\cap C_{r}
=\{y_k\}.
\]
Moreover, no edge of $G$ joins $V_r^{-}\setminus\{y_k\}$ to $V_r^{+}\setminus\{y_k\}$: every edge of $G$ is contained in a maximal clique, and every maximal clique belongs to exactly one of the two components of the deleted clique tree. Hence $y_k$ separates $i$ from $j$ in $G$, i.e., every $i$--$j$ path must pass through $y_k=s_{r}$.

\medskip
\emph{$\bullet$ The separators occur in order.} Let $y_k=s_r$ and $y_{k+1}=s_t$, where $r<t$. Then $s_t$ lies on the $C_m$-side of the cut $C_{r-1}C_r$, whereas $i$ lies on its $C_0$-side. Since the only common vertex of the two sides is $s_r$, every path from $i$ to $s_t$ must first pass through $s_r$. Thus the first occurrences of $y_1,y_2,\ldots,y_{q-1}$ along every $i$--$j$ path appear in this order. In particular, this holds for every shortest $i$--$j$ path, which is simple because all edge lengths are positive.

% Thus every $i$--$j$ path encounters $y_1,y_2,\ldots,y_{q-1}$ in this order.

\medskip
\emph{$\bullet$ Finally.} Every $i$--$j$ path has at least $q$ edges, while $y_0,y_1,\ldots,y_q$ is itself an $i$--$j$ path with $q$ edges. Therefore $q=d_G^{\mathrm{unw}}(i,j)\leq\diam(G)$, where $d_G^{\mathrm{unw}}$ is the graph metric for $G$ with all unit edge weights.\medskip

\begin{figure}[htbp]
\centering
\includegraphics[width=1\textwidth]{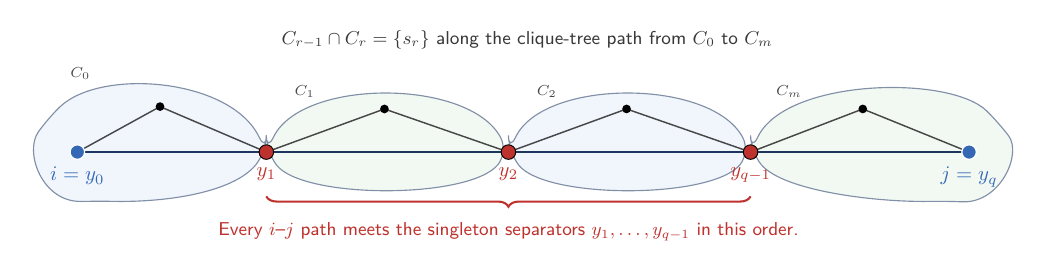}
\vspace{-2.4em}
\caption{Chordal graph structure with all singleton separators.}
\label{F:singleton-graph-structure}
\end{figure}

$\bullet$ Retain the reduced path $y_0=i,y_1,\ldots,y_q=j$ constructed above, and define
\[
\ell_r:=\ach(a_{y_ry_{r+1}}),
\qquad 0\leq r<q.
\]
Since $y_r$ and $y_{r+1}$ are distinct adjacent vertices, $\ell_r>0$. Moreover, $f_A$ preserves their prescribed edge length, and hence $d_{\Lob}\bigl(f_A(y_r),f_A(y_{r+1})\bigr)=\ell_r$. The edge $\{y_r,y_{r+1}\}$ gives $d_G(y_r,y_{r+1})\leq\ell_r$, whereas the nonexpansiveness of $f_A$ gives $\ell_r=d_{\Lob}\bigl(f_A(y_r),f_A(y_{r+1})\bigr)\leq d_G(y_r,y_{r+1})$. Therefore
\[
d_G(y_r,y_{r+1})=\ell_r.
\]

$\bullet$ We have shown that every $i$--$j$ path encounters $y_0=i,y_1,\ldots,y_q=j$ in this order. Its portion between $y_r$ and $y_{r+1}$ consequently has length at least $\ell_r$, and hence every such path has length at least $\sum_{r=0}^{q-1}\ell_r$. Conversely, $y_0,y_1,\ldots,y_q$ is itself an $i$--$j$ path, and its length is precisely this sum. Thus
\[
d_G(i,j)=\sum_{r=0}^{q-1}\ell_r.
\]

$\bullet$ Since every separator is a singleton, its Lorentz-Gram matrix is $(1)_{1\times1}$, and Theorem~\ref{T:clique-product-completion} gives $b_{ij}=\prod_{r=0}^{m}a_{x_rx_{r+1}}$. Every factor corresponding to a repetition $x_r=x_{r+1}$ is the
diagonal entry $a_{x_rx_r}=1$. Deleting these factors therefore yields
\[
b_{ij}
=
\prod_{r=0}^{q-1}a_{y_ry_{r+1}}
=
\prod_{r=0}^{q-1}\cosh\ell_r.
\]
Consequently, $d_{\Lob}\bigl(f_A(i),f_A(j)\bigr)=\ach\left(\prod_{r=0}^{q-1}\cosh\ell_r\right)$.

\begin{figure}[htbp]
\centering
\includegraphics[width=1\textwidth]{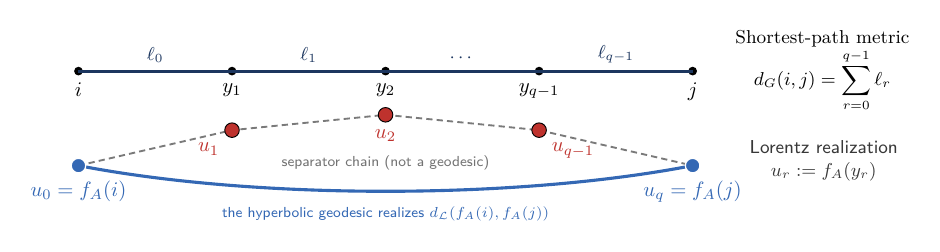}
\vspace{-2.4em}
\caption{The shortest-path metric and realization.}
\label{F:reduced-path-canonical-realization}
\end{figure}

$\bullet$ We prove the elementary inequality:
\[
\prod_{r=0}^{q-1}\cosh\ell_r
\geq
\cosh \sqrt{\sum_{r=0}^{q-1}\ell_r^2} .
\]
Define $F(x):=\log\cosh\sqrt{x}$, $x\geq0$. Then $F(0)=0$, and, for $x>0$, $F'(x)=\frac{\tanh\sqrt{x}}{2\sqrt{x}}$. The function $t\mapsto \tanh(t)/t$ is decreasing on $(0,\infty)$, since
\[
\frac{d}{dt}\left(\frac{\tanh t}{t}\right)
=
\frac{t\sech^2t-\tanh t}{t^2}
=
\frac{t-\sinh t\cosh t}{t^2\cosh^2t}
\leq0,
\]
since $\sinh t \geq t$ and $\cosh t \geq 1$. Hence $F'$ is decreasing, and therefore $F$ is concave. We claim that $F$ is subadditive. Indeed, for $x,y\geq0$, with
$x+y>0$, concavity and $F(0)=0$ give
\[
F(x)
=
F\left(\frac{x}{x+y}(x+y)+\frac{y}{x+y}\,0\right)
\geq
\frac{x}{x+y}F(x+y),
\]
and similarly, $F(y)\geq\frac{y}{x+y}F(x+y)$. Adding these inequalities yields $F(x+y)\leq F(x)+F(y)$. Applying subadditivity to $\ell_0^2,\ldots,\ell_{q-1}^2$ and exponentiating, we obtain
\[
\cosh \sqrt{\sum_{r=0}^{q-1}\ell_r^2} 
=\exp F\left(\sum_{r=0}^{q-1}\ell_r^2\right) 
\leq\exp \sum_{r=0}^{q-1}F(\ell_r^2) 
=\prod_{r=0}^{q-1}\cosh\ell_r.
\]
Thus, the desired inequality follows.\medskip

$\bullet$ Since $\ach$ is increasing, it follows that
\[
d_{\Lob}\bigl(f_A(i),f_A(j)\bigr)
=
\ach\left(\prod_{r=0}^{q-1}\cosh\ell_r\right)
\geq
\sqrt{\sum_{r=0}^{q-1}\ell_r^2}.
\]
Using this and Cauchy--Schwarz,
\[
\frac{d_G(i,j)}
     {d_{\Lob}(f_A(i),f_A(j))}
\leq
\frac{\sum_{r=0}^{q-1}\ell_r}
     {\sqrt{\sum_{r=0}^{q-1}\ell_r^2}}
\leq q^{1/2}
\leq \diam(G)^{1/2}.
\]
Taking the maximum gives $\dist(f_A)\leq \diam(G)^{1/2}$. \medskip

$\bullet$ To prove sharpness, put $d:=\diam(G)$ and choose vertices $i,j$ whose unweighted distance is $d$. For $\varepsilon>0$, prescribe $a_{ii}^{(\varepsilon)}:=1$ for every $i\in V$ and $a_{ij}^{(\varepsilon)}:=\cosh\varepsilon$ for every $\{i,j\}\in E$. On a clique of size $p$, the resulting matrix is
\[
(1-\cosh\varepsilon)I_p
+\cosh\varepsilon\,\mathbf1_p\mathbf1_p^t.
\]
Its eigenvalues are $1+(p-1)\cosh\varepsilon>0$ and, when $p\geq2$, $1-\cosh\varepsilon<0$ with multiplicity $p-1$. Hence every maximal-clique principal submatrix of $A^{(\varepsilon)}$ has exactly one positive eigenvalue and no zero eigenvalue.  Since its diagonal entries are equal to $1$ and its off-diagonal entries are at least $1$, Theorem~\ref{T:LG-ker-char} shows that it is a nonsingular Lorentz--Gram matrix.  Consequently $A^{(\varepsilon)}$ is admissible. The preceding path identities give
\[
d_G(i,j)=d\varepsilon,
\qquad
b_{ij}^{(\varepsilon)}=(\cosh\varepsilon)^d.
\]
Consequently, $d_{\Lob}\bigl(f_{A^{(\varepsilon)}}(i),f_{A^{(\varepsilon)}}(j)\bigr)=\ach\bigl((\cosh\varepsilon)^d\bigr)$. As $\varepsilon\downarrow0$, we have $(\cosh\varepsilon)^d=1+\frac{d}{2}\varepsilon^2+O(\varepsilon^4)$, and, using $\ach (1+t)=\sqrt{2t}+O(t^{3/2})$ as $t\downarrow0$, we obtain $\ach \bigl((\cosh\varepsilon)^d\bigr)=\sqrt d\,\varepsilon+O(\varepsilon^3)$. It follows that as $\varepsilon \downarrow 0$,
\[
\dist\bigl(f_{A^{(\varepsilon)}}\bigr)
\geq
\frac{d_G(i,j)}{d_{\Lob}\bigl(f_{A^{(\varepsilon)}}(i), f_{A^{(\varepsilon)}}(j)\bigr)}
=
\frac{d\varepsilon}
     {\sqrt d\,\varepsilon+O(\varepsilon^3)}
\longrightarrow \sqrt d.
\]
This yields the desired sharpness.

\medskip
\textit{\textbf{2.} A clique-tree separator has size at least two.}
\medskip

Suppose that $CD\in E(\ct)$ and $S:=C\cap D$, $|S|\geq2$. Note, $C\setminus S\ne\varnothing\ne D\setminus S$ because $C$ and $D$ are distinct maximal cliques. Choose $x\in C\setminus S$ and $y\in D\setminus S$. Deleting $CD$ from $\ct$ gives two components whose unions of cliques meet precisely in $S$. Hence every path in $G$ from $x$ to $y$ meets $S$.

\begin{figure}[htbp]
\centering
\includegraphics[width=1\textwidth]{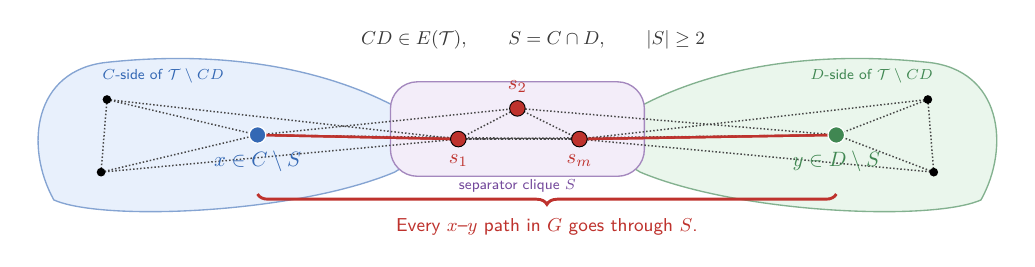}
\vspace{-2.4em}
\caption{Graph separation in the case of a bigger separator.}
\label{F:bigger-separator-path}
\end{figure}

The idea is to construct an admissible $A^{(\varepsilon)}$ on the given graph $G$, for $\varepsilon>0$, such that the corresponding distortion tends to infinity as $\varepsilon\downarrow0$. Specifically, we show that the shortest-path distance $d_G(x,y)$ remains away from zero uniformly in $\varepsilon$, while $d_{\Lob}\bigl(f_{A^{(\varepsilon)}}(x),f_{A^{(\varepsilon)}}(y)\bigr) \to 0$ as $\varepsilon\downarrow0$.

We begin by considering the vertices in $S$. Choose linearly independent vectors $(u_s)_{s\in S}\subseteq\Lob(\R^{|S|-1})$, and put $W:=\Span\{u_s:s\in S\}$. Since $|S|\geq2$, the set $W\cap\Lob(\R^{|S|-1})$ contains infinitely many points. Indeed, the $|S|$ linearly independent separator vectors span $\R\oplus\R^{|S|-1}$, whose positive unit hyperboloid contains the distinct points $(\cosh t,\sinh t,0,\ldots,0)$, $t>0$, with $|S|$ coordinates. Choose $w\in W\cap\Lob(\R^{|S|-1})$ distinct from every $u_s$, $s\in S$. (In contrast, if $S=\{s\}$, then $W=\R u_s$ and $W\cap\Lob(\R^0)=\{u_s\}$, so no choice of $w\neq u_s$ would be possible.) As $w\in W$, there is a column vector $c=(c_s)_{s\in S}$ such that
\[
w=\sum_{s\in S}c_su_s,
\qquad\text{and we set}\qquad
\delta:=\min_{s\in S}d_{\Lob}(w,u_s)>0.
\]
% Embed $W$ in $\R\oplus\R^{|V|-1}$ by adjoining zero spatial coordinates. Choose the vectors $(e_v)_{v\in V\setminus S}$ along the newly adjoined spatial coordinate directions, so that
% \[
% [e_v,e_z]=-\delta_{vz},
% \qquad
% [e_v,u_s]=0
% \quad(v,z\in V\setminus S,\ s\in S).
% \]
Since the $|S|$ vectors $(u_s)_{s\in S}$ are linearly independent, $\dim W=|S|$ and $W=\mathbb R\oplus\mathbb R^{|S|-1}$, on which the Lorentz form has signature $(1,|S|-1)$ by Lemma~\ref{lem:sign-inn}. Embed $W$ isometrically into $\mathbb R\oplus\mathbb R^{|V|-1}$ as $\mathbb R\oplus\mathbb R^{|S|-1}\oplus\{0\}$, by adjoining zero spatial coordinates.  Its orthogonal complement $W^{\perp}=\{0\}\oplus\{0\}\oplus\mathbb R^{|V|-|S|}$ is negative definite of dimension
\[
  (|V|-1)-(|S|-1)=|V|-|S|=|V\setminus S| ,
\]
so the newly adjoined coordinate directions supply exactly as many pairwise orthogonal negative unit vectors as there are vertices outside $S$. Let $(e_v)_{v\in V\setminus S}$ be the corresponding coordinate vectors of $W^{\perp}$, so that
\[
  [e_v,e_z]=-\delta_{vz},\qquad [e_v,u_s]=0
  \qquad (v,z\in V\setminus S,\ s\in S).
\] 
For $\varepsilon>0$, define
\[
u_s^{(\varepsilon)}:=u_s
\quad(s\in S),
\qquad
u_v^{(\varepsilon)}
:=\cosh(\varepsilon)w+\sinh(\varepsilon)e_v
\quad(v\in V\setminus S).
\]
For every $v\notin S$, we have $[u_v^{(\varepsilon)},u_v^{(\varepsilon)}]=\cosh^2(\varepsilon)-\sinh^2(\varepsilon)=1$. Moreover, $e_v$ has zero time coordinate, so the time coordinate of $u_v^{(\varepsilon)}$ is $\cosh(\varepsilon)w_0>0$. Thus all the vectors belong to $\Lob(\R^{|V|-1})$. They are also linearly independent: taking the Lorentz inner product of a linear relation with each $e_v$ first forces all coefficients indexed by $V\setminus S$ to vanish, since $\sinh(\varepsilon)>0$; the remaining coefficients vanish by the linear independence of $(u_s)_{s\in S}$. Form the full Lorentz-Gram matrix $M^{(\varepsilon)} :=\bigl([u_i^{(\varepsilon)},u_j^{(\varepsilon)}]\bigr)_{i,j\in V}$, and let $A^{(\varepsilon)}$ be its restriction to the diagonal and the edges of $G$:
\[
a_{ij}^{(\varepsilon)}
:=[u_i^{(\varepsilon)},u_j^{(\varepsilon)}]
\qquad
\text{if }i=j\text{ or }\{i,j\}\in E.
\]
Every clique is represented by a linearly independent family of positive-sheet unit timelike vectors, so its principal submatrix is a nonsingular Lorentz-Gram matrix. Consequently, $A^{(\varepsilon)}$ is admissible with respect to $G$. Let $B^{(\varepsilon)}$ denote its clique-product completion. We next compute $b_{xy}^{(\varepsilon)}$. Set $A_S:=\bigl([u_s,u_t]\bigr)_{s,t\in S}$. Since $w=\sum_{s\in S}c_su_s$ and $[w,w]=1$, we have
\[
c^tA_Sc
=\Big[\sum_{s\in S}c_su_s,\sum_{t\in S}c_tu_t\Big]
=[w,w]=1.
\]
Moreover, for every $s\in S$ and $v\in\{x,y\}$, orthogonality gives
\[
a_{sv}^{(\varepsilon)}
=[u_s,\cosh(\varepsilon)w+\sinh(\varepsilon)e_v]
=\cosh(\varepsilon)\sum_{t\in S}[u_s,u_t]c_t
=\cosh(\varepsilon)(A_Sc)_s.
\]
Thus, by symmetry we have $A_{\{x\}\times S}^{(\varepsilon)}=\cosh(\varepsilon)c^tA_S$ and $A_{S\times\{y\}}^{(\varepsilon)}=\cosh(\varepsilon)A_Sc$. The cliques $C$ and $D$ are adjacent in $\ct$, with the separator $C\cap D=S$. Hence the clique-product formula in Theorem~\ref{T:clique-product-completion} yields
\[
\begin{aligned}
b_{xy}^{(\varepsilon)}
=A_{\{x\}\times S}^{(\varepsilon)}
  A_S^{-1}A_{S\times\{y\}}^{(\varepsilon)}
=\cosh^2(\varepsilon)c^tA_Sc
=\cosh^2(\varepsilon).
\end{aligned}
\]
Therefore, $d_{\Lob}\bigl(f_{A^{(\varepsilon)}}(x),
             f_{A^{(\varepsilon)}}(y)\bigr)
=\operatorname{arcosh}\bigl(\cosh^2(\varepsilon)\bigr)
\to0$ as $\varepsilon\downarrow0$. It remains to bound the shortest-path distance from below.
For every $s\in S$, the constructed vectors satisfy
\[
\begin{aligned}
d_{\Lob}(u_x^{(\varepsilon)},u_s)
=d_{\Lob}(u_y^{(\varepsilon)},u_s)
&=\operatorname{arcosh}\bigl(
  \cosh(\varepsilon)\cosh d_{\Lob}(w,u_s)\bigr)
\geq d_{\Lob}(w,u_s)\ge\delta.
\end{aligned}
\]
Here we used $\cosh\varepsilon\geq1$, the monotonicity of $\ach$, and the definition of $\delta$. 

\begin{figure}[htbp]
\centering
\includegraphics[width=1\textwidth]{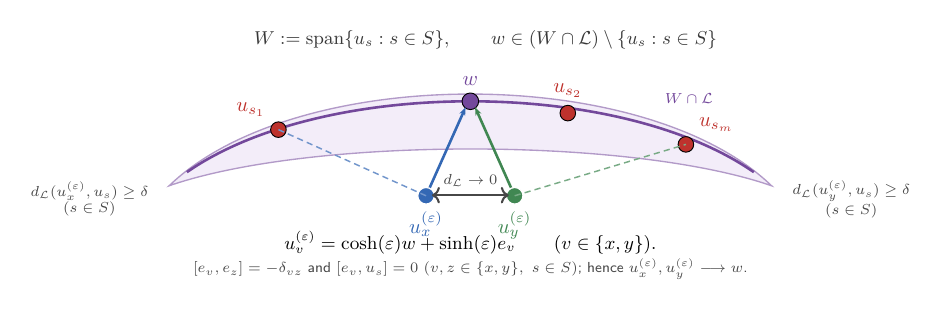}
\vspace{-2.4em}
\caption{Lorentz construction.}
\label{F:bigger-separator-lorentz-construction}
\end{figure}

Let $P$ be any path in $G$ from $x$ to $y$, and let $s,s'$ be its first and last vertices in $S$, respectively. Every edge of $P$ has prescribed length equal to the hyperbolic distance between its corresponding constructed vectors. Applying the hyperbolic triangle inequality to the portion from $x$ to $s$ and the portion from $s'$ to $y$, we obtain
\[
\operatorname{length}(P)
\ge d_{\Lob}(u_x^{(\varepsilon)},u_s)
   +d_{\Lob}(u_{s'},u_y^{(\varepsilon)})
\ge2\delta.
\]
Taking the minimum over all such paths gives $d_G(x,y)\ge2\delta$. Since the canonical realization preserves every prescribed edge length, we conclude that
\[
\dist\bigl(f_{A^{(\varepsilon)}}\bigr)
\geq
\frac{d_G(x,y)}{d_{\Lob}\bigl(f_{A^{(\varepsilon)}}(x),f_{A^{(\varepsilon)}}(y)\bigr)}
\geq
\frac{2\delta}{\operatorname{arcosh}\bigl(\cosh^2(\varepsilon)\bigr)}
\longrightarrow\infty
\qquad\text{as }\varepsilon\downarrow0,
\]
as required. This completes the proof.
\end{proof}

\section{Example}
\label{S:five-vertex-example}

We give a five-vertex example that makes the clique-product formula, inverse sparsity, the determinant formula, Algorithm~\ref{alg:chordal-orthogonal-innovation}, and the second case of Theorem~\ref{T:separator-size-distortion-dichotomy} explicit.

\medskip
\textit{\textbf{1.} Graph construction.}
\medskip

\noindent Let $G=(V,E)$, where $V=\{1,2,3,4,5\}$, be the chordal graph whose maximal cliques are
\[
 C_1:=\{1,2,3\},\qquad
 C_2:=\{2,3,4\},\qquad
 C_3:=\{2,3,5\}.
\]
Thus $G$ consists of three triangles with the common edge $\{2,3\}$. We choose the clique-tree $C_1-C_2-C_3$. Both of its edges have the same separator
\[
 S:=\{2,3\},\qquad
 C_1\cap C_2=C_2\cap C_3=S,
\]
which has cardinality two. The graph and the chosen clique-tree are displayed in Figure~\ref{F:five-vertex-example}.

\begin{figure}[htbp]
\centering
\vspace{1em}

\definecolor{separatorcolor}{HTML}{C65D2E}
\definecolor{cliqueone}{HTML}{3B6FB6}
\definecolor{cliquetwo}{HTML}{2A8C82}
\definecolor{cliquethree}{HTML}{7A5AA6}

\begin{tikzpicture}[
  scale=0.9,
  every node/.style={font=\small},
  vertex/.style={
    circle,
    draw,
    line width=0.8pt,
    minimum size=6mm,
    inner sep=0pt
  },
  cliquenode/.style={
    draw,
    rounded corners=2pt,
    line width=0.8pt,
    minimum width=10mm,
    minimum height=7mm,
    inner sep=2pt
  }
]

% Graph vertices
\node[
  vertex,
  draw=separatorcolor,
  fill=separatorcolor!15
] (2) at (0,0.8) {$2$};

\node[
  vertex,
  draw=separatorcolor,
  fill=separatorcolor!15
] (3) at (0,-0.8) {$3$};

\node[
  vertex,
  draw=cliqueone,
  fill=cliqueone!12
] (1) at (-1.8,0) {$1$};

\node[
  vertex,
  draw=cliquetwo,
  fill=cliquetwo!12
] (4) at (1.8,0.9) {$4$};

\node[
  vertex,
  draw=cliquethree,
  fill=cliquethree!12
] (5) at (1.8,-0.9) {$5$};

% Common separator edge
\draw[
  separatorcolor,
  very thick
] (2)--(3);

% Edges belonging to the three maximal cliques
\draw[
  cliqueone,
  line width=0.9pt
] (1)--(2) (1)--(3);

\draw[
  cliquetwo,
  line width=0.9pt
] (4)--(2) (4)--(3);

\draw[
  cliquethree,
  line width=0.9pt
] (5)--(2) (5)--(3);

\node[font=\small\bfseries] at (0,-1.55) {$G$};

% Clique tree
\node[
  cliquenode,
  draw=cliqueone,
  fill=cliqueone!12
] (C1) at (5.0,0) {$C_1$};

\node[
  cliquenode,
  draw=cliquetwo,
  fill=cliquetwo!12
] (C2) at (6.8,0) {$C_2$};

\node[
  cliquenode,
  draw=cliquethree,
  fill=cliquethree!12
] (C3) at (8.6,0) {$C_3$};

\draw[
  separatorcolor,
  line width=1pt
]
(C1)--node[
  above,
  text=separatorcolor,
  font=\small\bfseries
] {$S$} (C2);

\draw[
  separatorcolor,
  line width=1pt
]
(C2)--node[
  above,
  text=separatorcolor,
  font=\small\bfseries
] {$S$} (C3);

\node[font=\small\bfseries] at (6.8,-1.55)
{clique-tree $\mathcal T$};

\end{tikzpicture}\caption{The five-vertex chordal graph $G$ and the clique-tree
$\mathcal T$. Both clique-tree edges have the size-two separator
$S=\{2,3\}$.}
\label{F:five-vertex-example}
\end{figure}

\medskip
\textit{\textbf{2.} Admissible data.}
\medskip

\noindent Fix $\tau>0$, and abbreviate $t:=\cosh\tau$ and $s:=\sinh\tau$.
% Define a partial symmetric matrix $A^{(\tau)}$, with specification
% graph $G$, by
% \[
%  a_{ii}^{(\tau)}:=1,\qquad
%  a_{23}^{(\tau)}:=\frac53,\qquad
%  a_{i2}^{(\tau)}=a_{i3}^{(\tau)}:=q
%  \quad (i\in\{1,4,5\}).
% \]
% All other off-diagonal entries are unspecified. 
Define the partial symmetric matrix
\[
A^{(\tau)}_{5\times 5} = (a_{ij}^{(\tau)})
:=
\begin{pmatrix}
1 & q & q & \ast & \ast \\[1mm] 
q & 1 & 5/3 & q & q \\[1mm]
q & 5/3 & 1 & q & q \\[1mm]
\ast & q & q & 1 & \ast \\[1mm]
\ast & q & q & \ast & 1
\end{pmatrix},
\qquad
q:=\frac{2\cosh\tau}{\sqrt3}.
\]
The symbol $\ast$ denotes an unspecified entry. Its specification graph is $G$. With the indices in the order $i,2,3$, every maximal clique submatrix has the form
\[
 K_\tau=
 \begin{pmatrix}
 1&q&q\\[1mm]
 q&1&\frac53\\[1mm]
 q&\frac53&1
 \end{pmatrix},
 \qquad i\in\{1,4,5\}.
\]
Indeed, put
\[
 A_S:=A^{(\tau)}_{S\times S}
 =\begin{pmatrix}1&\frac53\\[1mm]\frac53&1\end{pmatrix},
 \qquad
 A_S^{-1}
 =\begin{pmatrix}
 -\frac9{16}&\frac{15}{16}\\[1mm]
 \frac{15}{16}&-\frac9{16}
 \end{pmatrix},
 \qquad
 g:=\binom q q.
\]
The matrix $A_S$ has inertia $(1,1,0)$, and
\[
g^tA_S^{-1}g
=
(q,q)
\begin{pmatrix}
-\frac9{16}&\frac{15}{16}\\[2pt]
\frac{15}{16}&-\frac9{16}
\end{pmatrix}
\binom q q
=
\frac{3q^2}{4}
=
t^2.
\]
Therefore, the Schur complement of $A_S$ in $K_\tau$ is
\[
1-g^tA_S^{-1}g=1-t^2=-s^2<0.
\]
Since $q=2\cosh(\tau)/\sqrt3>1$, every entry of $K_\tau$ is at least $1$. Consequently, $K_\tau$ has inertia $(1,2,0)$, and hence is a nonsingular Lorentz-Gram matrix.

\medskip
\textit{\textbf{3.} Clique-product completion.}
\medskip

\noindent We next compute its clique-product completion. The missing entry between $1\in C_1\setminus S$ and $5\in C_3\setminus S$ uses the entire clique-tree path $C_1,C_2,C_3$. Since its two separators are both equal to $S$, Theorem~\ref{T:clique-product-completion} gives
\begin{align*}
 b_{15}^{(\tau)}
 &=A^{(\tau)}_{\{1\}\times S}A_S^{-1}
   A^{(\tau)}_{S\times S}A_S^{-1}
   A^{(\tau)}_{S\times\{5\}}
=g^tA_S^{-1}A_SA_S^{-1}g
  =g^tA_S^{-1}g=t^2.
\end{align*}
For the missing entry between $1\in C_1\setminus S$ and
$4\in C_2\setminus S$, the one-edge transfer formula gives
\[
b_{14}^{(\tau)}
=
A^{(\tau)}_{\{1\}\times S}A_S^{-1}
A^{(\tau)}_{S\times\{4\}}
=
g^tA_S^{-1}g
=
t^2.
\]
Likewise, since $4\in C_2\setminus S$ and
$5\in C_3\setminus S$,
\[
b_{45}^{(\tau)}
=
A^{(\tau)}_{\{4\}\times S}A_S^{-1}
A^{(\tau)}_{S\times\{5\}}
=
g^tA_S^{-1}g
=
t^2.
\]
Therefore, in the natural vertex order $1,2,3,4,5$, the canonical completion is
\[
 B^{(\tau)}=
 \begin{pmatrix}
 1&q&q&t^2&t^2\\[1mm]
 q&1& 5/3 &q&q\\[1mm]
 q& 5/3 &1&q&q\\[1mm]
 t^2&q&q&1&t^2\\[1mm]
 t^2&q&q&t^2&1
 \end{pmatrix}.
\]

\medskip
\textit{\textbf{4.} Inverse and determinant of the completion.}
\medskip

\noindent The inverse can also be written explicitly. Set
\[
 \alpha:=\frac{\sqrt3\,t}{4s^2},\qquad
 \beta:=-\frac9{16}-\frac{9t^2}{16s^2},\qquad
 \gamma:=\frac{15}{16}-\frac{9t^2}{16s^2}.
\]
Then
\[
 \bigl(B^{(\tau)}\bigr)^{-1}=
 \begin{pmatrix}
 -s^{-2}&\alpha&\alpha& {0}&{0}\\
 \alpha&\beta&\gamma&\alpha&\alpha\\
 \alpha&\gamma&\beta&\alpha&\alpha\\
 {0}&\alpha&\alpha&-s^{-2}&{0}\\
 {0}&\alpha&\alpha&{0}&-s^{-2}
 \end{pmatrix}.
\]
The displayed off-diagonal zeros occur exactly at the nonedges of $G$, namely, $\{1,4\}$, $\{1,5\}$, $\{4,5\}$. Thus the inverse-sparsity conclusion of Theorem~\ref{T:chordal-canonicity} is visible in this example.

% \medskip
% \textit{\textbf{5.} Determinant.}
% \medskip

\noindent For completeness, the determinant may be obtained either from the displayed matrix or from the clique-separator formula. We have
\[
 \det A_S=-\frac{16}{9},
 \qquad
 \det K_\tau
 =\det A_S\,(1-g^tA_S^{-1}g)
 =\frac{16}{9}s^2.
\]
There are three maximal cliques and two clique-tree edges, both carrying
the separator $S$. Hence $S$ occurs with multiplicity two, and
\begin{align*}
 \det B^{(\tau)}
 &=\frac{\prod_{r=1}^3\det A^{(\tau)}_{C_r\times C_r}}
         {\prod_{C_rC_{r+1}\in E(\mathcal T)}
          \det A^{(\tau)}_{(C_r\cap C_{r+1})\times
                           (C_r\cap C_{r+1})}}
 =\frac{\left(\frac{16}{9}s^2\right)^3}
         {\left(-\frac{16}{9}\right)^2}
  =\frac{16}{9}s^6.
\end{align*}

\medskip
\textit{\textbf{5.} Lorentz-Gram realization via simplicial elimination and orthogonal innovation.}
\medskip

% \noindent We now carry out Algorithm~\ref{alg:chordal-orthogonal-innovation}. Use the simplicial elimination ordering
% \[
% v_1=2,\qquad v_2=3,\qquad v_3=1,\qquad
% v_4=4,\qquad v_5=5.
% \]

\noindent We now carry out Algorithm~\ref{alg:chordal-orthogonal-innovation}. Use the simplicial elimination ordering as in Figure~\ref{F:five-vertex-elimination-ordering}:
\[
v_1=2,\qquad v_2=3,\qquad v_3=1,\qquad
v_4=4,\qquad v_5=5.
\]
The corresponding earlier-neighbour cliques $S_k:=N_G(v_k)\cap\{v_1,\ldots,v_{k-1}\}$, $k=2,\ldots,5$, are
\[
S_2=\{2\},
\qquad
S_3=S_4=S_5=\{2,3\}.
\]

\begin{figure}[htbp]
\centering
\vspace{0.5em}

\definecolor{separatorcolor}{HTML}{C65D2E}
\definecolor{cliqueone}{HTML}{3B6FB6}
\definecolor{cliquetwo}{HTML}{2A8C82}
\definecolor{cliquethree}{HTML}{7A5AA6}

\begin{tikzpicture}[
  vertex/.style={
    circle,
    draw,
    line width=0.8pt,
    minimum size=6mm,
    inner sep=0pt,
    font=\small
  },
  sepold/.style={
    vertex,
    draw=separatorcolor,
    fill=separatorcolor!10
  },
  sepnew/.style={
    vertex,
    draw=separatorcolor,
    fill=separatorcolor!35,
    line width=1.2pt
  },
  oneold/.style={
    vertex,
    draw=cliqueone,
    fill=cliqueone!10
  },
  onenew/.style={
    vertex,
    draw=cliqueone,
    fill=cliqueone!35,
    line width=1.2pt
  },
  twoold/.style={
    vertex,
    draw=cliquetwo,
    fill=cliquetwo!10
  },
  twonew/.style={
    vertex,
    draw=cliquetwo,
    fill=cliquetwo!35,
    line width=1.2pt
  },
  threenew/.style={
    vertex,
    draw=cliquethree,
    fill=cliquethree!35,
    line width=1.2pt
  },
  separator edge/.style={
    draw=separatorcolor,
    line width=1.3pt
  },
  clique one edge/.style={
    draw=cliqueone,
    line width=0.9pt
  },
  clique two edge/.style={
    draw=cliquetwo,
    line width=0.9pt
  },
  clique three edge/.style={
    draw=cliquethree,
    line width=0.9pt
  },
  transition/.style={
    ->,
    >=stealth,
    draw=gray!70,
    line width=1pt
  },
  stage label/.style={
    font=\small\bfseries
  }
]

% ------------------------------------------------
% First row: W_1 -> W_2 -> W_3
% ------------------------------------------------

% Stage 1
%\node[stage label] at (0,4.15) {$G[W_1]$};
\node[sepnew] (w1-2) at (0,3) {$2$};

% Arrow W_1 -> W_2
\draw[transition] (0.45,3)--(2.65,3);

% Stage 2
%\node[stage label] at (3.4,4.15) {$G[W_2]$};
\node[sepold] (w2-2) at (3.4,3.5) {$2$};
\node[sepnew] (w2-3) at (3.4,2.5) {$3$};
\draw[separator edge] (w2-2)--(w2-3);

% Arrow W_2 -> W_3
\draw[transition] (3.85,3)--(5.9,3);

% Stage 3
%\node[stage label] at (7.15,4.15) {$G[W_3]$};
\node[sepold] (w3-2) at (7.5,3.5) {$2$};
\node[sepold] (w3-3) at (7.5,2.5) {$3$};
\node[onenew] (w3-1) at (6.3,3) {$1$};

\draw[separator edge] (w3-2)--(w3-3);
\draw[clique one edge]
  (w3-1)--(w3-2)
  (w3-1)--(w3-3);

% ------------------------------------------------
% Curved transition from W_3 to W_4
% ------------------------------------------------

% \draw[
%   transition,
%   rounded corners=5pt
% ]
% (8.0,3)
% --(8.8,3)
% --(8.8,0)
% --(8.55,0);

\draw[
  transition,
  rounded corners=5pt
]
% (8.0,3)
% --(11,3)
% --(11,0)
% --(8.55,0);
(8.0,3)
--(11.3,3)
--(11.3,0.6)
--(8.55,0.6);

% ------------------------------------------------
% Second row: W_4 -> W_5, read right to left
% ------------------------------------------------

\begin{scope}[yshift=0.6cm]
% Stage 4
%\node[stage label] at (7.2,1.2) {$G[W_4]$};
\node[sepold] (w4-2) at (7.2,0.5) {$2$};
\node[sepold] (w4-3) at (7.2,-0.5) {$3$};
\node[oneold] (w4-1) at (6.1,0) {$1$};
\node[twonew] (w4-4) at (8.2,0) {$4$};

\draw[separator edge] (w4-2)--(w4-3);
\draw[clique one edge]
  (w4-1)--(w4-2)
  (w4-1)--(w4-3);
\draw[clique two edge]
  (w4-4)--(w4-2)
  (w4-4)--(w4-3);

% Arrow W_4 -> W_5
\draw[transition] (5.75,0)--(4.15,0);

\begin{scope}[xshift=-1cm]
% Stage 5
%\node[stage label] at (3.4,1.2) {$G[W_5]=G$};
\node[sepold] (w5-2) at (3.4,0.5) {$2$};
\node[sepold] (w5-3) at (3.4,-0.5) {$3$};
\node[oneold] (w5-1) at (2.25,0) {$1$};
\node[twoold] (w5-4) at (4.5,0.55) {$4$};
\node[threenew] (w5-5) at (4.5,-0.55) {$5$};
\end{scope}

\end{scope}

\draw[separator edge] (w5-2)--(w5-3);
\draw[clique one edge]
  (w5-1)--(w5-2)
  (w5-1)--(w5-3);
\draw[clique two edge]
  (w5-4)--(w5-2)
  (w5-4)--(w5-3);
\draw[clique three edge]
  (w5-5)--(w5-2)
  (w5-5)--(w5-3);

\end{tikzpicture}
\caption{The simplicial elimination ordering $v_1=2,v_2=3,v_3=1,v_4=4,v_5=5$, displayed in the construction direction used by Algorithm~\ref{alg:chordal-orthogonal-innovation}.}
\label{F:five-vertex-elimination-ordering}
\end{figure}

Thus, after the initial attachment of $v_2=3$ along the singleton clique $\{2\}$, each of the vertices $1,4,5$ is attached along the size-two clique $S=\{2,3\}$. Let $\varepsilon_1,\ldots,\varepsilon_4$ be the standard basis of $\R^4$, and set
\[
e_k:=(0,\varepsilon_{k-1}),
\qquad k=2,\ldots,5.
\]
The algorithm begins with
\[
u_{v_1}=u_2=(1,0,0,0,0).
\]
For $k=2$, we have $v_2=3$ and $S_2=\{2\}$. Therefore,
\[
A_2=(1),\qquad
g_2=\left(\frac53\right),\qquad
c_2=A_2^{-1}g_2=\left(\frac53\right),
\]
and hence
\[
p_2=\frac53u_2,
\qquad
\rho_2
=\sqrt{g_2^tA_2^{-1}g_2-1}
=\sqrt{\frac{25}{9}-1}
=\frac43.
\]
Since $e_2=(0,1,0,0,0)$, the algorithm gives
\[
u_3=p_2+\rho_2e_2
=\frac53u_2+\frac43e_2
=\left(\frac53,\frac43,0,0,0\right).
\]
For $k=3$, we have $v_3=1$ and $S_3=\{2,3\}$. Thus
\[
A_3=A_S
=
\begin{pmatrix}
1&\frac53\\[2pt]
\frac53&1
\end{pmatrix},
\qquad
g_3=g=\binom q q.
\]
Using $q=2t/\sqrt3$, we obtain
\[
\begin{aligned}
c_3
&=A_S^{-1}g
=
\begin{pmatrix}
-\frac9{16}&\frac{15}{16}\\[2pt]
\frac{15}{16}&-\frac9{16}
\end{pmatrix}
\binom q q
=\frac{3q}{8}\binom11
=\frac{\sqrt3\,t}{4}\binom11.
\end{aligned}
\]
Consequently,
\[
\begin{aligned}
p_3
=\frac{\sqrt3\,t}{4}u_2
  +\frac{\sqrt3\,t}{4}u_3
=\left(\frac{2t}{\sqrt3},
         \frac{t}{\sqrt3},0,0,0\right),
\end{aligned}
\]
while
\[
\rho_3
=\sqrt{g^tA_S^{-1}g-1}
=\sqrt{t^2-1}
=s.
\]
Since $e_3=(0,0,1,0,0)$, we obtain
\[
u_1=p_3+\rho_3e_3
=\left(\frac{2t}{\sqrt3},
       \frac{t}{\sqrt3},s,0,0\right).
\]
For $k=4$, we have $v_4=4$ and again
\[
S_4=\{2,3\},\qquad
A_4=A_S,\qquad
g_4=g.
\]
Therefore,
\[
c_4=\frac{\sqrt3\,t}{4}\binom11,
\qquad
p_4=\left(\frac{2t}{\sqrt3},
          \frac{t}{\sqrt3},0,0,0\right),
\qquad
\rho_4=s.
\]
Since $e_4=(0,0,0,1,0)$, it follows that
\[
u_4=p_4+\rho_4e_4
=\left(\frac{2t}{\sqrt3},
       \frac{t}{\sqrt3},0,s,0\right).
\]
Finally, for $k=5$, we have $v_5=5$ and
\[
S_5=\{2,3\},\qquad
A_5=A_S,\qquad
g_5=g.
\]
Thus
\[
c_5=\frac{\sqrt3\,t}{4}\binom11,
\qquad
p_5=\left(\frac{2t}{\sqrt3},
          \frac{t}{\sqrt3},0,0,0\right),
\qquad
\rho_5=s.
\]
Since $e_5=(0,0,0,0,1)$, we obtain
\[
u_5=p_5+\rho_5e_5
=\left(\frac{2t}{\sqrt3},
       \frac{t}{\sqrt3},0,0,s\right).
\]
Algorithm~\ref{alg:chordal-orthogonal-innovation} therefore produces
the points
\[
\begin{aligned}
u_1&=\left(\frac{2t}{\sqrt3},
           \frac{t}{\sqrt3},s,0,0\right),&
u_2&=(1,0,0,0,0),\\
u_3&=\left(\frac53,\frac43,0,0,0\right),&
u_4&=\left(\frac{2t}{\sqrt3},
           \frac{t}{\sqrt3},0,s,0\right),\\
u_5&=\left(\frac{2t}{\sqrt3},
           \frac{t}{\sqrt3},0,0,s\right).
\end{aligned}
\]
A direct calculation gives $[u_i,u_i]=1$ for all $i\in V$, and
\[
[u_2,u_3]=\frac53,
\qquad
[u_i,u_2]=[u_i,u_3]=q
\quad (i\in\{1,4,5\}),
\]
and
\[
[u_i,u_j]=t^2
\quad
(i\ne j,\ i,j\in\{1,4,5\}).
\]
Hence the Lorentz-Gram matrix of these points is precisely
$B^{(\tau)}$.

\medskip
\textit{\textbf{6.} Distortion blow-up.}
\medskip

\noindent This family makes the second part of Theorem~\ref{T:separator-size-distortion-dichotomy} concrete. Fix two distinct outer vertices, say $1$ and $4$. Every $1$--$4$ path in $G$ must meet the separator $S=\{2,3\}$, and the two-edge paths $1,2,4$ and $1,3,4$ are shortest. Since every edge joining an outer vertex to $S$ has length
\[
 \ell_\tau:=\ach  q
 =\ach \left(\frac{2\cosh\tau}{\sqrt3}\right),
\]
we obtain $d_G(1,4)=2\ell_\tau$. On the other hand, $b_{14}^{(\tau)}=t^2$, and hence the distance between the completed points is
\[
 d_{\Lob}(u_1,u_4)=\ach (t^2)
 =\ach (\cosh^2\tau).
\]
As $\tau\downarrow0$, we have $d_G(1,4)\to 2\ach \left(\frac2{\sqrt3}\right)>0$, and $d_{\Lob}(u_1,u_4) \to 0$. Consequently,
\[
 \dist (f_{A^{(\tau)}})
 \geq
 \frac{d_G(1,4)}{d_{\Lob}(u_1,u_4)}
 \longrightarrow\infty.
\]
Thus, this elementary common-edge configuration exhibits the unbounded-distortion phenomenon caused by a clique-tree separator of cardinality two.

\section{Applications and ramifications}
\label{S:connections-applications}

\subsection{Exact recovery from sparse hyperbolic measurements}
\label{SS:sparse-hyperbolic-recovery}

Incomplete pairwise distance information is a natural input to hyperbolic distance geometry. Tabaghi and Dokmani\'c \cite[Sections~3 and 4]{TabaghiDokmanic2020} address completion and denoising through semidefinite relaxations, followed by spectral reconstruction of the points. For fully observed hyperbolic distances, the \texttt{hydra} algorithm of Keller-Ressel and Nargang recovers the configuration up to isometry \cite[Theorem~3.1]{KellerResselNargang2020}. Our results connect with these approaches by addressing three questions for exact observations on chordal graphs: whether the measurements admit a hyperbolic realization, which completion the construction selects, and when this completion recovers the original configuration.

\medskip
{\it \textbf{1.} Realizability of the observations.}
\medskip

\noindent Let $G=(V,E)$ be a finite connected chordal graph with $n:=|V|\geq2$, and suppose that a distance $\lambda_{ij}>0$ is observed for each $\{i,j\}\in E$. Define the partial matrix $A$ by $a_{ii}:=1$, $a_{ij}:=\cosh\lambda_{ij}$ for $\{i,j\}\in E$. For each maximal clique $C$, set $\lambda_{ii}:=0$, choose an anchor $c\in C$, and form
\[
P_C^{(c)}
:=
\bigl(
\cosh\lambda_{ic}\cosh\lambda_{jc}
-\cosh\lambda_{ij}
\bigr)_{i,j\in C\setminus\{c\}}.
\]
Every entry of this matrix is determined by the observations. By Theorem~\ref{T:LG-ker-char}, the clique matrix $A_{C\times C}$ is Lorentz-Gram if and only if $P_C^{(c)}\lgeq 0$. Theorem~\ref{T:chordal-characterization} therefore gives an exact local criterion: the observations admit points $(u_i)_{i\in V}\subseteq\Lob(\R^{n-1})$ satisfying $d_{\Lob}(u_i,u_j)=\lambda_{ij}$ on every observed edge if and only if these anchored matrices are positive semidefinite on all maximal cliques. Thus global consistency can be checked using only fully observed subsets. Failure of a clique test also identifies a specific obstruction: if $z^tP_C^{(c)}z<0$ for some vector $z$, then the observations on $C$ cannot be realized in any hyperbolic space.

\medskip
{\it \textbf{2.} Selection of the canonical completion.}
\medskip

\noindent When the observations are realizable, the completion need not be unique. Under the nonsingularity hypotheses of Theorem~\ref{T:clique-product-completion}, the clique-product construction selects a completion explicitly. These hypotheses can also be checked through the anchored matrices: placing the anchor first gives
\[
A_{C\times C}\sim(1)\oplus\bigl(-P_C^{(c)}\bigr),
\]
so a realizable clique matrix is nonsingular precisely when $P_C^{(c)}\succ0$. Suppose that this condition holds on every maximal clique. Theorems~\ref{T:clique-product-completion} and~\ref{T:chordal-canonicity} then give the nonsingular completion $B_A$ and its realization by Algorithm~\ref{alg:chordal-orthogonal-innovation}. The completed distances
\[
\widehat d_{ij}:=\ach\bigl((B_A)_{ij}\bigr),
\qquad i,j\in V,
\]
agree exactly with the observations. Each missing distance is determined by transfers through observed separator blocks. For example, if adjacent maximal cliques $C,D$ meet in $S$, then for $x\in C\setminus S$ and $y\in D\setminus S$,
\[
\widehat d_{xy}
=
\ach\!\left(
A_{\{x\}\times S}A_{S\times S}^{-1}A_{S\times\{y\}}
\right).
\]
The general formula composes these transfers along the clique-tree path between the endpoint cliques. The selected completion is independent of the clique-tree and of the simplicial elimination ordering. By Theorem~\ref{T:chordal-canonicity}, it is characterized among nonsingular Lorentz-Gram completions by
\[
(B_A^{-1})_{ij}=0
\qquad
\text{whenever }i\neq j\text{ and }\{i,j\}\notin E.
\]
This gives a precise structural interpretation of the reconstruction rule. Its realization has hyperbolic span of dimension $n-1$; a prescribed smaller target dimension requires additional rank conditions.

\medskip
{\it \textbf{3.} Recovery of the original configuration.}
\medskip

\noindent To interpret the completed distances as recovered measurements, one must specify when the observations identify the unknown configuration. Chordality alone does not ensure this. For example, on the path $1-2-3$, fix $\lambda_{12}=a>0$ and $\lambda_{23}=b>0$. The points
\[
u_2=(1,0,0),\qquad
u_1=(\cosh a,\sinh a,0),\qquad
u_3=(\cosh b,\sinh b\cos\theta,\sinh b\sin\theta)
\]
realize these observations for every $0<\theta<\pi$, while
\[
\cosh d_{\Lob}(u_1,u_3)
=
\cosh a\cosh b-\sinh a\sinh b\cos\theta.
\]
Their Gram matrices are nonsingular, but the unobserved distance varies with $\theta$. The canonical completion selects $\theta=\pi/2$. Now suppose that the observations arise from an unknown nonsingular Lorentz-Gram matrix $K=(k_{ij})_{i,j\in V}$, so that $\lambda_{ij}=\ach(k_{ij})$ for $\{i,j\}\in E$. Every clique principal submatrix of $K$ is then a nonsingular Lorentz-Gram matrix. The uniqueness characterization in Theorem~\ref{T:chordal-canonicity} gives
\[
B_A=K
\quad\Longleftrightarrow\quad
(K^{-1})_{ij}=0
\quad\text{for every }i\neq j
\text{ with }\{i,j\}\notin E.
\]
Under this inverse-sparsity assumption, the reconstruction recovers all unobserved distances exactly. Equality of the Gram matrices also identifies the original and reconstructed configurations up to a hyperbolic isometry between their hyperbolic spans.

The number of observations required by this model can be small. If every maximal clique has at most $\omega$ vertices, a simplicial elimination ordering gives each vertex at most $\omega-1$ earlier neighbours, and hence $|E|\leq n(\omega-1)$. For fixed $\omega$, the number of measured distances is therefore linear in $n$. This is a statement about the observations required; explicitly listing all completed distances still requires quadratically many entries.

A complementary recovery mechanism is provided by landmarks. Keller-Ressel and Nargang \cite[Theorem~3.1]{KellerResselNargang2023} show that \texttt{L-hydra} recovers a configuration in $\Lob(\R^d)$ from landmark-to-landmark and landmark-to-point distances when the landmark vectors span $\R\oplus\R^d$. Choose $d+1$ linearly independent landmarks, indexed by $L$. Their observations determine $A_{L\times L}$ and every column $g_v:=A_{L\times\{v\}}$. Writing $u_v=\sum_{s\in L}(c_v)_s u_s$ gives $A_{L\times L}c_v=g_v$, and therefore
\[
k_{ij}=g_i^tA_{L\times L}^{-1}g_j,
\qquad
d_{\Lob}(u_i,u_j)=\ach(k_{ij}).
\]

This observation pattern is chordal, with maximal cliques $L\cup\{v\}$ for the nonlandmark vertices $v$. When $n>d+1$, these clique matrices have rank $d+1$ and are singular, placing them within the general completion setting of Theorem~\ref{T:chordal-characterization}. The transfer formula is forced by the fact that every point lies in the landmark span, so no orthogonal component remains undetermined. The required number of measured distances is $\binom{d+1}{2}+(d+1)(n-d-1)$.

\subsection{Hierarchical and phylogenetic data}
\label{S:hierarchical-phylogenetic-data}

Hyperbolic representations have been used to infer phylogenetic trees from sequence data \cite{Wilson} and to estimate distances for phylogenetic placement and tree updates \cite{Jiang}. The product-distance completion has a direct connection with these applications: it represents an additive tree metric by hyperbolic distances through an invertible scalar transformation. This connects the construction with classical reconstruction from leaf distances and permits explicit recovery guarantees under perturbations.

\medskip
{\it \textbf{1.} Exact representation of additive tree distances.}
\medskip

\noindent Let $T=(V,E)$ be a finite tree with $n:=|V|\geq2$ and positive branch lengths $(w_e)_{e\in E}$. Write $\delta(i,j):=\sum_{e\in P(i,j)}w_e$ for $i,j\in V$, for its path metric. Specify $a_{ii}:=1$ and $a_{ij}:=e^{w_e}$ on each edge $e=\{i,j\}$. Theorem~\ref{T:tree-path-product-completion} gives the Lorentz-Gram completion defined by $b_{ij}
=\prod_{e\in P(i,j)}e^{w_e}=e^{\delta(i,j)}$. Consequently, there are points $(u_i)_{i\in V}\subseteq\Lob(\R^{n-1})$ whose hyperbolic distances $D(i,j):=d_{\Lob}(u_i,u_j)$ satisfy 

\begin{equation}\label{E:phylogenetic-distance-transform} 
D(i,j)=\ach\!\left(e^{\delta(i,j)}\right),
\qquad
\delta(i,j)=\log\cosh D(i,j).
\end{equation}
The hyperbolic length of an edge is therefore $\lambda_e=\ach(e^{w_e})$, and conversely $w_e=\log\cosh\lambda_e$.

The transformation $t\mapsto\ach(e^t)$ was proposed by Matsumoto, Mimori, and Fukunaga \cite[Materials and methods, equations~(2)--(3)]{MatsumotoMimoriFukunaga2021} for phylogenetic embeddings, using the hyperbolic law of cosines at a right angle. Theorem~\ref{T:tree-path-product-completion} supplies an explicit simultaneous realization of all these transformed distances on a finite tree. As noted in Remark~\ref{R:product-distance-tree-metrics}, taking entrywise logarithms of the completion recovers the original additive metric. This exact realization allows the dimension to grow with $n$; it does not assert exact representability in a fixed low-dimensional hyperbolic space.

\medskip
{\it \textbf{2.} Recovery from leaf distances.}
\medskip

\noindent Let $X$ be the set of all leaves of $T$, and suppose that $|X|\geq3$ and every internal vertex has degree at least three. Assume that the hyperbolic distances $D(i,j)$ are known for all $i,j\in X$, while the internal vertices and tree topology are unknown. Equation~\eqref{E:phylogenetic-distance-transform} determines the additive leaf metric $\delta|_{X\times X}$. The uniqueness of the reduced tree representation of an additive metric \cite{Buneman} then determines $T$ and all its branch lengths $w_e$, up to an isomorphism fixing the leaf labels. If desired, the prescribed hyperbolic edge lengths are recovered as $\lambda_e=\ach(e^{w_e})$. In particular, when $T$ is binary, meaning that every internal vertex has degree three, the neighbor-joining algorithm \cite{SaitouNei1987,Atteson1999} applied to $\log\cosh D$ recovers the tree and its branch lengths exactly. For a more recent account on neighbor-joining algorithm, see \cite{Mihaescu-Levy-Pachter}.

The absence of internal vertices of degree two makes the individual branches identifiable: subdividing an edge leaves all leaf distances unchanged provided the new $w$-lengths sum to the old one. Without this assumption, only the tree obtained by suppressing such vertices and its summed branch lengths are determined. The reconstruction is unrooted; identifying a root requires additional information or assumptions.

The recovered splits also have a direct expression in the Lorentz-Gram entries. For four distinct leaves whose induced quartet has split $ij\mid k\ell$, let $h>0$ be the length of its central edge, measured in the $w$-metric after suppressing vertices of degree two. Cancellation of the four pendant lengths gives
\[
\delta(i,k)+\delta(j,\ell)
=\delta(i,\ell)+\delta(j,k)
=\delta(i,j)+\delta(k,\ell)+2h.
\]
Thus $b_{ij}b_{k\ell}<b_{ik}b_{j\ell}=b_{i\ell}b_{jk}$ and $h=\frac12\log\!\left(\frac{b_{ik}b_{j\ell}}{b_{ij}b_{k\ell}} \right)$. If the induced quartet is unresolved, the three products are equal. Hence the multiplicative four-point condition records both the quartet split and its central length.

\medskip
{\it \textbf{3.} Stability of topology recovery.}
\medskip

\noindent Suppose now that $T$ is binary and that the observed leaf dissimilarities $\widetilde D(i,j)$ are symmetric, nonnegative, and zero on the diagonal. Define
\[
\varepsilon:=\max_{i,j\in X}
\bigl|\widetilde D(i,j)-D(i,j)\bigr|,
\qquad
\widetilde\delta(i,j):=\log\cosh\widetilde D(i,j).
\]
Since $(\log\cosh t)'=\tanh t\in[0,1)$ for $t\geq0$, the mean value theorem yields $\max_{i,j\in X} \bigl|\widetilde\delta(i,j)-\delta(i,j)\bigr| \leq\varepsilon$. Atteson's radius theorem \cite{Atteson1999,Mihaescu-Levy-Pachter} therefore implies that neighbor-joining applied to $\widetilde\delta$ recovers the unrooted topology of $T$ whenever
\begin{equation}\label{E:phylogenetic-recovery-bound}
\varepsilon
<\frac12\min_{e\in E}w_e
=\frac12\min_{e\in E}\log\cosh\lambda_e.
\end{equation}
This is a sufficient condition for topology recovery; it does not assert exact recovery of branch lengths from noisy data. The bound follows by combining the nonexpansiveness of the scalar transformation with the classical neighbor-joining guarantee.

\medskip
{\it \textbf{4.} Rooted hierarchies and the scope of the model.}
\medskip

\noindent There is also an interpretation for equidistant rooted trees. Suppose that a root $r\in V$ is specified and every leaf is at the same $w$-distance $H$ from $r$. If $v$ is the least common ancestor of leaves $i,j$ and $h(v):=\delta(r,v)$, then
\[
\delta(i,j)=2\bigl(H-h(v)\bigr),
\qquad
D(i,j)=\ach\!\left(e^{2(H-h(v))}\right).
\]
The leaf metric $\delta$ is then an ultrametric. Since $t\mapsto\ach(e^t)$ is strictly increasing, $D$ is also an ultrametric and determines the same nested clusters, with transformed merge levels. The equal-depth assumption here concerns the additive branch lengths $w_e$.

These conclusions concern distances arising from the product-distance construction, or observations close enough
to such distances to satisfy \eqref{E:phylogenetic-recovery-bound}. Arbitrary hyperbolic distances need not become an additive tree metric under $\log\cosh$. Accordingly, applying this transformation to a learned embedding has an exact recovery interpretation only when the stated tree model is justified. The construction provides an exact geometric representation and a conditional recovery guarantee; a statistical guarantee for inference from sequence data requires an additional model relating those data to the observed distances.

\subsection{Further directions} \ 

\medskip
{\it \textbf{1.} Algorithms for bounded treewidth.} The clique-tree construction in Theorem~\ref{T:clique-product-completion} suggests efficient implementations when the maximal clique size is bounded. The canonical completion requires inversions only of clique and separator matrices, so its local algebraic operations involve matrices whose dimensions are controlled by the treewidth $\max_{C\in \ct}|C|-1$ rather than by the total number of vertices $|V|$. This distinction is particularly relevant when the completion is retained in its clique-tree representation: explicitly forming all missing entries necessarily requires quadratic output size. A detailed complexity analysis of the resulting algorithms lies beyond the scope of the present work.

\medskip
{\it \textbf{2.} Consistency diagnostics.} The chordality theorem also has an interpretation for consistency testing of partial hyperbolic distance data. For chordal specification graphs, global consistency is completely detected by the local clique conditions. In contrast, on a nonchordal graph all clique data may be individually realizable while the complete collection is globally inconsistent; the chordless-cycle construction in the proof of Theorem~\ref{T:chordal-characterization} gives an elementary instance of such an obstruction. This suggests using induced cycles and related minimal obstructions as diagnostic structures for inconsistent or anomalous sparse measurements.

% \medskip
% {\it \textbf{3.} Distributed hyperbolic reconstruction.} The clique-product construction may also be viewed as a negative-curvature counterpart of reconstruction from Euclidean distance measurements. For chordal measurement graphs, compatible local configurations can be assembled through their clique separators, and the missing cross-clique distances are determined by matrix-valued transfers along the clique-tree. This suggests distributed localization and reconstruction procedures for networks modeled in hyperbolic space, in which local computations and separator information replace a single global nonlinear embedding problem.

\medskip
{\it \textbf{3.} Distributed hyperbolic reconstruction.} A distributed reconstruction procedure divides a global problem into smaller local problems. Here the local subsystems are the maximal cliques of the measurement graph, whose edges represent the observed distances. For a chordal graph, these cliques form a clique-tree, and adjacent cliques $C$ and $D$ communicate through their common separator $S=C\cap D$. The clique-product construction determines the missing cross-clique entries by matrix-valued transfers through $A_{S\times S}^{-1}$, composed along paths in the clique-tree. This suggests reconstructing the clique configurations locally and exchanging only separator data, rather than solving a single global nonlinear embedding problem. Developing and analyzing such a distributed algorithm remains an interesting direction.

\subsection{Open questions}

We collect a few natural questions arising from the present article.

\begin{quest}[Canonical completion for singular data]
Theorem~\ref{T:chordal-characterization} permits singular maximal clique matrices, whereas the canonical, inverse-sparsity, and variational characterizations obtained later require nonsingularity. Is there a distinguished completion for singular
admissible data? If so, can it be characterized intrinsically, and is it obtained as a regularization-independent limit of nonsingular canonical completions?    
\end{quest}

\begin{quest}[The space of Lorentz-Gram completions]
Let $G$ be chordal and suppose that its maximal clique principal submatrices are nonsingular Lorentz-Gram matrices. Can the full family of Lorentz-Gram completions be parametrized explicitly in terms of the relative positions of the orthogonal components introduced across the clique separators? Can this parameterization be made intrinsic, and hence independent of the chosen clique-tree?    
\end{quest}

\begin{quest}[Completion on a fixed nonchordal graph]
Theorem~\ref{T:chordal-characterization} characterizes the graphs for which clique-wise admissibility guarantees completion for every choice of partial data. For a fixed nonchordal graph $G$, however, some admissible partial matrices remain completable. Can these matrices be characterized explicitly? In particular, can global compatibility be detected by conditions associated with the induced cycles of $G$, or with a controlled family of its chordal extensions?
\end{quest}

\begin{quest}[Infinite chordal graphs]
To what extent do the existence, canonicality, inverse-sparsity, and variational results of this paper extend to infinite chordal graphs? What additional assumptions on the graph and the prescribed Lorentz-Gram data are necessary for such extensions?
\end{quest}

\begin{quest}[Stability of the local-to-global construction]
Suppose that the prescribed clique matrices are close to Lorentz-Gram matrices and that the separator matrices are uniformly well conditioned. Can one bound the perturbation required to obtain globally realizable data in terms of the local defects? Likewise, can the sensitivity of the clique-product completion be controlled in terms of the separator condition numbers and the geometry of the clique-tree?
\end{quest}

\subsection*{AI declaration} The original proofs in this paper were checked with the assistance of ChatGPT and Claude. We also benefited from these models for orientation through applications and general organization of the article. The authors take full responsibility for all content.

\subsection*{Acknowledgments}

M.P. was partially supported by a Simons Foundation Collaboration Grant. P.K.V. was supported by the Centre de recherches mathématiques and the CRM-Laval Postdoctoral Fellowship at Université Laval.

\end{document}